\newtheorem{thm}[subsection]{Theorem}
\newtheorem{lem}[subsection]{Lemma}
\newtheorem{prop}[subsection]{Proposition}
\newtheorem{cor}[subsection]{Corollary}
\newtheorem{hyp}[subsection]{Hypothesis}
\theoremstyle{definition}
\newtheorem{definition}[subsection]{Definition}
\theoremstyle{remark}
\newtheorem{rem}[subsection]{Remark}
\numberwithin{equation}{subsection}
\def\<{\langle}
\def\>{\rangle}
\def\leq{\leqslant}
\def\geq{\geqslant}
\def\1{\mathds{1}}
\def\a{\alpha}
\def\Ac{\mathcal A}
\def\C{\mathbb C}
\def\e{\epsilon}
\def\G{\mathbb G}
\def\Ga{\Gamma}
\def\K{\mathcal K}
\def\l{\lambda}
\def\m{\mathfrak m}
\def\norm{\mathcal N}
\def\O{\mathcal O}
\def\Q{\mathbb Q}
\def\R{\mathbb R}
\def\s{\sigma}
\def\tra{\mathsf T}
\def\Z{\mathbb Z}
\def\mydash{\protect\nobreakdash-\hspace{0pt}}
\newcommand{\diff}{\mathop{}\!\mathrm{d}}
\newcommand{\iso}{\xrightarrow{\protect{\raisebox{-2pt}[0pt][0pt]{$\sim$}}}}
\newcommand{\extp}{\@ifnextchar^\@extp{\@extp^{\,}}}
\def\@extp^#1{\mathop{\bigwedge\nolimits^{\!#1}}}
\DeclareMathOperator{\ab}{ab}
\DeclareMathOperator{\Aut}{Aut}
\DeclareMathOperator{\cha}{char}
\DeclareMathOperator{\End}{End}
\DeclareMathOperator{\et}{\text{\'et}}
\DeclareMathOperator{\Frob}{Frob}
\DeclareMathOperator{\Gal}{Gal}
\DeclareMathOperator{\GL}{GL}
\DeclareMathOperator{\Hom}{Hom}
\DeclareMathOperator{\id}{id}
\DeclareMathOperator{\Img}{Im}
\DeclareMathOperator{\Ind}{Ind}
\DeclareMathOperator{\ord}{ord}
\DeclareMathOperator{\Rep}{Rep}
\DeclareMathOperator{\rk}{rk}
\DeclareMathOperator{\spe}{sp}
\DeclareMathOperator{\tame}{t}
\DeclareMathOperator{\Tr}{Tr}
\DeclareMathOperator{\unr}{ur}
\DeclareMathOperator{\Vect}{Vect}
\DeclareMathOperator{\WD}{WD}
\DeclareMathOperator{\W}{W}
\DeclareMathOperator{\wild}{w}
\DeclareDocumentCommand\bigslant{ m m g }{
{\IfNoValueT{#3}{\left.}\raisebox{.25em}{$#1$}\!\IfNoValueT{#3}{\middle/}\IfNoValueF{#3}{#3}\!\raisebox{-.25em}{$#2$}\IfNoValueT{#3}{\right.}}
} 
\newenvironment{smallarray}[1]
 {\null\,\vcenter\bgroup\scriptsize
  \arraycolsep=.23em
  \hbox\bgroup$\array{@{}#1@{}}}
 {\endarray$\egroup\egroup\,\null}
\setlist[enumerate]{label=\upshape(\arabic*)}
\begin{document}

\title[Root numbers and real multiplication]{On the root numbers of abelian varieties with real multiplication}

\author{Lukas Melninkas}
\address{IRMA, Université de Strasbourg \\ 7, rue René Descartes \\ 67084 Strasbourg Cedex, France}
\curraddr{}
\email{melninkas@math.unistra.fr}
\thanks{}
\keywords{abelian varieties, local root numbers, Galois representations, real multiplication}

\begin{abstract}
Let $A/K$ be an abelian variety with real multiplication defined over a $p$-adic field $K$ with $p>2$. We show that $A/K$ must have either potentially good or potentially totally toric reduction. In the former case we give formulas of the local root number of $A/K$ under the condition that inertia acts via an abelian quotient on the associated Tate module; in the latter we produce formulas without additional hypotheses. 
\end{abstract}

\subjclass[2020]{Primary 11G10, 14K15; Secondary 11F80, 11G40}

\date{\today}

\maketitle


\section*{Introduction}

Let $A$ be an abelian variety defined over a number field $\K$. The Hasse--Weil conjecture (see, e.g., \autocite[\S4.1]{serre_Lfun_conj}) predicts that its completed $L$-function $\Lambda(A/\K,s)$ has a meromorphic continuation to the whole of $\C$ and satisfies a functional equation \[\Lambda(A/\K,s)=w(A/\K)\Lambda(A/\K,2-s).\] The coefficient $w(A/\K)$ is called the global root number. A straightforward consequence of the conjecture is the equality $w(A/\K)=(-1)^{\ord_{s=1}\Lambda(A/\K,s)}$. On the other hand, the Mordel--Weil theorem tells us that the group of rational points $A(\K)$ is finitely generated. Its rank $\rk(A/\K)$ is notably hard to compute in general and, granting analytic continuation of $\Lambda$ at $s=1$, is predicted to be equal to $\ord_{s=1}\Lambda(A/\K,s)$ by the Birch and Swinnerton-Dyer conjecture. As a consequence of the two aforementioned conjectures, we get a third one, the Parity Conjecture, which is the equality \[(-1)^{\rk(A/\K)}=w(A/\K).\]

Deligne \autocite{deligne_eq_fonctionelle} shows that the global root number can be defined unconditionally as the product of local factors $\prod_v w(A_v/\K_v)$ where $v$ runs through all the places of $\K$ and all but finitely many factors are $1$. For each $v$, the local root number $w(A_v/\K_v)$ is defined via the respective local complex Weil--Deligne representation by using the theory of $\e$-factors, see \autocite[\S4]{deligne_eq_fonctionelle}. It is expected that the geometric properties of $A$ impose enough conditions on the associated Weil--Deligne representations to allow a complete and explicit determination of the root number. Let us briefly review the existing formulas in the next paragraphs.

For each infinite place, the Weil--Deligne representation is defined using the Hodge decomposition of $H^1(A(\C),\C)$ and the root number is always $(-1)^{\dim A}$ (see \autocite[Lemma~2.1]{sabitova_root}). 

For a finite place $v$ of residual characteristic $p$ the Weil--Deligne representation is obtained via an $\ell$-adic Galois representation for any $\ell\neq p$ (or even for $\ell=p$ following Fontaine). We know that $w(A_v/\K_v)=1$ if $A$ has good reduction at $v$. In general, the existence of the Weil pairing forces $w(A_v/\K_v)=\pm1$. 

Let $K/\Q_p$ be a finite extension and let $A/K$ be an elliptic curve. When $p\geq 5$, the local root number was computed by Rohrlich \autocite{rohrlich_formulas}. In this case, the root number is essentially determined by the Néron--Kodaira reduction type of $A/K$. Kobayashi \autocite{kobayashi} has extended Rohrlich's results to include the case $p=3$, where we find a dependancy on the Artin conductor $a(A/K)$. Kobayashi's general formula also includes some coefficients of a Weierstrass equation. The case $p=2$ has been studied by Connell \autocite{connell} and the Dokchitsers \autocite{dd_root_ellc2}.

For a general abelian variety, a framework for studying the associated Weil--Deligne representation and its root number was developed by Sabitova \autocite{sabitova_root}. Under the condition that the inertia action on the Tate module is tame, Bisatt \autocite{bisatt} gives explicit formulas. 

\subsection{The setup and results} Let $p$ be a prime and let $K/\Q_p$ be a finite extension. Let $A/K$ be abelian variety of dimension $g$ together with a polarization $\l\colon A\to A^\vee$ defined over $K$. Let $\cdot^\dag$ denote the corresponding Rosati involution on $\End_K^0(A):=\End_K(A)\otimes_{\Z}\Q$. If there exists a number field $F$ of degree $[F:\Q]=g$ and an inclusion of rings $F\to\End^0_K(A)$ such that the image of $F$ is fixed by $\cdot^\dag$, then we say that $A$ has real multiplication (RM) by $F$ over $K$. In this case, the positivity of $\cdot^\dag$ implies that $F$ is totally real (see, e.g., \autocite[p.~41, Lemma~2]{shimura_taniyama}). We will discuss some examples in \ref{subs:examples}. The main results of this paper generalize the methods used in the case of elliptic curves for the RM setting and produce formulas for local root numbers $w(A/K)$ that extend previously known results. 

Let $\Ga_K=\Gal(\overline{K}/K)$, $I_K$, and $q_K$ be respectively the absolute Galois group, the inertia subgroup, and the order of the residue field of $K$. We fix a prime $\ell\neq p$ and we denote by $\rho_\ell$ the $\ell$-adic Galois representation of $\Ga_K$ on $H^1_{\et}(A_{\overline{K}},\Q_\ell)$. By $a(A/K)$ we denote the Artin conductor of $\rho_\ell$, which is independent of $\ell$. We shall prove the following.

\begin{thm}[{Cor.~\ref{cor:exposition_pot_good}}]\label{thm:exposition_pot_good}
Let $A/K$ be an abelian variety of dimension $g$ with real multiplication. Let us suppose that $p\neq2$ and that $A/K$ has potentially good reduction, so that $\rho_\ell(I_K)$ is a finite group of some order $ep^r$ with $p\nmid e$.
\begin{enumerate}
\item If $\rho_\ell(\Ga_K)$ is abelian, then $\frac{q_K-1}{e}$ is an integer, and \[w(A/K)=(-1)^{\frac{g(q_K-1)}{e}};\]
\item If $\rho_\ell(\Ga_K)$ is non-abelian and $\rho_\ell(I_K)$ is abelian, then $\frac{a(A/K)}{2g}$ and $ \frac{q_K+1}{e}$ are integers, and we have \[w(A/K)=(-1)^{\frac{a(A/K)}{2}+\frac{g(q_K+1)}{e}}.\]
\end{enumerate}
\end{thm}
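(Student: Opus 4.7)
The plan is to exploit the real multiplication to split the relevant Weil--Deligne representation of $A/K$ into $g$ mutually conjugate two-dimensional pieces, and then to apply root number formulas for $2$-dimensional Weil--Deligne representations in the spirit of Rohrlich and Kobayashi. Recall that the Tate module $V_\ell A$ is free of rank $2$ over $F\otimes_\Q\Q_\ell$; fixing an isomorphism $\overline{\Q_\ell}\cong\C$ yields a decomposition
\[
\rho_\ell\otimes_{\Q_\ell}\overline{\Q_\ell}\cong\bigoplus_{\iota\colon F\hookrightarrow\overline{\Q_\ell}}\rho_\iota,
\]
into two-dimensional pieces $\rho_\iota\colon W_K\to\GL_2(\overline{\Q_\ell})$ indexed by the $g$ embeddings $\iota$. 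The potentially good hypothesis forces $N=0$, so this is already the decomposition of the Weil--Deligne representation. The $\rho_\iota$ are permuted by $\Aut(\overline{\Q_\ell}/\Q_\ell)$ and are therefore pairwise Galois-conjugate, so they share a common inertia image (of prime-to-$p$ order that one checks equals $e$) and a common Artin conductor (which one checks equals $a(A/K)/g$). By multiplicativity of epsilon factors, $w(A/K)=\prod_\iota w(\rho_\iota)$, and the problem reduces to a single two-dimensional computation.

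For part~(1), abelianness of $\rho_\ell(\Ga_K)$ forces each $\rho_\iota$ to split as a sum of two Weil characters $\chi_\iota\oplus\chi'_\iota$. A tame inertia generator acts by a primitive $e$-th root of unity; this eigenvalue must be fixed by Frobenius conjugation (which raises tame inertia to the $q_K$-th power) because the global image is abelian, giving $e\mid q_K-1$ and hence the integrality of $(q_K-1)/e$. The root number of each character is a tame Gauss sum; using that $\det\rho_\iota$ is, up to a fixed twist, the cyclotomic character, one obtains $w(\chi_\iota)w(\chi'_\iota)=(-1)^{(q_K-1)/e}$, and multiplying over the $g$ embeddings gives the formula of~(1).

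For part~(2), each $\rho_\iota$ is an irreducible two-dimensional Weil--Deligne representation with non-abelian global image but abelian inertia image, hence monomial: $\rho_\iota\cong\Ind_{W_L}^{W_K}\psi_\iota$ for some character $\psi_\iota$ of $W_L$ and some quadratic extension $L/K$. A dedicated step---combining the abelian inertia hypothesis, the compatibility imposed by the $F$-action, and the form of $\det\rho_\iota$---shows that $L/K$ must be the unique unramified quadratic extension of $K$, so $q_L=q_K^2$ and $L$ is independent of $\iota$. Applying the Case~(1) argument over $L$, and using that non-abelianness of the global image rules out $e\mid q_K-1$, yields $e\mid q_K+1$ and thus the integrality of $(q_K+1)/e$. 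Inductivity of epsilon factors then gives $w(\rho_\iota)=\lambda(L/K)\,w(\psi_\iota)$ with $\lambda(L/K)=1$ since $L/K$ is unramified and $p\neq2$, while the conductor-discriminant formula yields $a(\rho_\iota)=2\,a(\psi_\iota)$, so $a(A/K)/(2g)=a(\psi_\iota)\in\Z$. Computing $w(\psi_\iota)$ over $L$ by the same Gauss-sum argument produces $(-1)^{a(\psi_\iota)+(q_K+1)/e}$, and taking the product over $\iota$ gives the formula of~(2).

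The main obstacle is the bookkeeping in the decomposition step: showing that the global invariants $e$, $p^r$, $a(A/K)$, and (in part~(2)) the quadratic extension $L/K$ distribute uniformly across the pieces $\rho_\iota$. This relies on the $\Aut(\overline{\Q_\ell}/\Q_\ell)$-conjugacy of the $\rho_\iota$ and on the $F$-linearity of the Galois action, and is where the RM hypothesis is most essential. A further subtlety in~(2) is the exclusion of ramified quadratic $L/K$: the abstract representation theory allows such $L$, but RM together with the Weil pairing must be used to preclude it (for instance, by forcing $\psi_\iota$ to satisfy integrality constraints on its ramification that are incompatible with a ramified induction). Once this bookkeeping is in place, the per-piece root number computations reduce to the classical elliptic-curve formulas with careful attention to the normalisations of Gauss sums and of the Langlands constant $\lambda(L/K)$.
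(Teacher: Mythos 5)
Your overall architecture is the same as the paper's: decompose $\W_i(\rho_\ell)$ into $g$ mutually $\Aut(\C)$-conjugate two-dimensional pieces $\rho_\iota$ via the rank-two freeness of $(V_\ell A)^*$ over $F_\ell$ (Prop.~\ref{prop:rho_iota}), reduce to one piece, and treat the abelian and non-abelian cases by decomposing, respectively inducing, $\rho_\iota$. Part~(1) of your argument is essentially sound: your derivation of $e\mid q_K-1$ from Frobenius conjugation on tame inertia is a legitimate alternative to the paper's class-field-theoretic argument, and the identity $w(\chi_\iota)w(\chi_\iota^{-1}\omega_K^{-1})=\chi_\iota(\theta_K(-1))=(-1)^{(q_K-1)/e}$ is exactly what is used --- though calling this a ``tame Gauss sum'' is misleading, since $\chi_\iota$ may be wildly ramified; the point is that no individual Gauss sum ever needs to be evaluated in this case.

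Part~(2), however, has genuine gaps at its three key steps. First, you propose to show that a \emph{ramified} quadratic induction is \emph{impossible}; this is the wrong target (a dihedral representation can be inducible from several quadratic extensions simultaneously) and is not how the unramified induction is obtained. What is actually needed, and what Lemmas~\ref{lem:frob_center} and~\ref{lem:H_unramified} provide, is a \emph{construction}: the abelian-inertia hypothesis together with the centrality of a suitable $\varphi_K^2$ makes $H=I(M/K)\cdot\<\varphi_K^2\>$ an abelian index-two subgroup, so $\rho_\iota|_H$ splits and $\rho_\iota=\Ind_H^G\chi_\iota$ with $M^{\overline H}=L_u$ unramified. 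Second, your route to $e\mid q_K+1$ is a non sequitur: from $e\mid q_K^2-1$ and $e\nmid q_K-1$ one cannot conclude $e\mid q_K+1$ (take $e=8$, $q_K=3$), and it is not even clear that non-abelianness of the global image forbids $e\mid q_K-1$. The paper instead deduces from Deligne's determinant/transfer identity \eqref{eq:det_induced} that $\chi_\iota\circ\theta_{L_u}$ is trivial on $\O_K^\times$, whence $e$ divides $[k_{L_u}^\times:k_K^\times]=q_K+1$. Third, ``computing $w(\psi_\iota)$ over $L$ by the same Gauss-sum argument'' does not work: here one must evaluate the root number of a \emph{single}, possibly wildly ramified character, not a product $w(\chi)w(\chi^*)$, and the sign of such an $\e$-factor is not elementary. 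The paper's Lemma~\ref{lem:eps_factor_chi} handles this by showing (again via \eqref{eq:det_induced}) that $\chi_\iota(\frac12)\chi_1\circ\theta_{L_u}$ is trivial on $K^\times$ and then invoking the Fr\"ohlich--Queyrut theorem; this ingredient is entirely absent from your sketch. A smaller slip: $\lambda(L_u/K,\psi_K)$ equals $(-1)^{n(\psi_K)}$, not $1$, for the unramified quadratic extension; in your write-up this error happens to cancel against the $(-1)^{n(\psi_K)}$ you also drop from $w(\psi_\iota)$, but both factors must be tracked.
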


\begin{thm}[{Prop.~\ref{prop:geom_trichotomy} and Cor.~\ref{cor:real_potMult_rootN_formulas}}] \label{thm:exposition_pot_toric}
Let $A/K$ be an abelian variety with real multiplcation. We suppose that $p\neq2$ and that $A/K$ does not have potentially good reduction. Then $A/K$ has potentially totally toric reduction and  
\[w(A/K)=\begin{cases}(-1)^g & \text{if the reduction (over $K$) is split multiplicative};\\ 
1 & \text{if the reduction is non-split multiplicative};\\ (-1)^{g\frac{q_K-1}{2}}&\text{if the reduction is additive}.\end{cases}\]
In addition, the three possibilities in the above formula are the only ones. 
\end{thm}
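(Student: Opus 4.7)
The plan is first to establish the trichotomy of reduction types from the RM structure on the Tate module, and then to reduce the local root number computation to the analogous one for elliptic curves by exploiting an $F$-equivariant decomposition of the Weil--Deligne representation.

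For the trichotomy, I would pass to a finite extension $L/K$ over which $A_L$ acquires semistable reduction (Grothendieck). The identity component of the Néron special fibre is an extension $0\to T\to G\to B\to 0$ of an abelian variety $B$ by a torus $T$, with $\dim T+\dim B=g$. The action of $F$ on $A_L$ preserves $T$ by functoriality of the Néron model and by intrinsicness of the maximal subtorus, and is faithful whenever $T\neq 0$ (else $2\in F$ would act as the identity on a nontrivial torus). Hence $X^*(T)\otimes\Q$ is an $F$-vector space of $\Q$-dimension $\dim T$, so $\dim T$ is a multiple of $[F:\Q]=g$; combined with $\dim T\leq g$, this forces $\dim T\in\{0,g\}$, yielding potentially good or potentially totally toric reduction. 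In the latter case $X^*(T)\otimes\Q$ is one-dimensional over $F$, so the $\Ga_K$-action on it factors through a continuous character $\eta\colon\Ga_K\to F^\times$ of finite image. Since $F$ is totally real, the only roots of unity in $F^\times$ are $\pm 1$, so $\eta$ is at most quadratic, giving exactly three cases: trivial (split multiplicative over $K$), unramified quadratic (non-split multiplicative), or ramified quadratic (additive, necessarily tame since $p\neq 2$).

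For the root numbers, the rigid-analytic uniformization $A^{\an}=T^{\an}/Y$ applied in each case gives $V_\ell(A)$ as an extension with nontrivial monodromy; the associated Weil--Deligne representation is a Steinberg-type one twisted by $\eta$. Crucially, $V_\ell(A)$ is free of rank two over $F\otimes\Q_\ell$ and $\eta$ takes values in $\{\pm 1\}$, hence is preserved by every embedding $F\hookrightarrow\overline{\Q}_\ell$. Base-changing to $\overline{\Q}_\ell$ and viewing the result as a complex WD-representation, one then obtains
\[V_\ell(A)\otimes_{\Q_\ell}\overline{\Q}_\ell\;\cong\;(\eta\otimes\mathrm{sp}(2))^{\oplus g},\]
so $w(A/K)=w(\eta\otimes\mathrm{sp}(2))^g$. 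The three cases reduce to classical elliptic-curve computations: $w(\mathrm{sp}(2))=-1$ yields $(-1)^g$ in the split case; for unramified quadratic $\eta$ the twist formula gives $-\eta(\pi_K)=+1$, and so $w=1$; for ramified quadratic $\chi$ the twist formula together with $\chi(-1)=(-1)^{(q_K-1)/2}$ at odd residue characteristic yields $(-1)^{g(q_K-1)/2}$.

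The main technical obstacle is verifying the $F$-equivariant decomposition above: one must carefully track $F$-equivariance through the monodromy filtration and the rigid uniformization, and ensure that the same $\{\pm 1\}$-valued character $\eta$ appears consistently across all $F_\lambda$-components. A secondary subtlety in the additive case is pinning down the ramified quadratic character $\chi$ precisely enough (likely via an invariant tied to the reduction, such as the discriminant of a Weierstrass-type equation or a canonical period lattice) so that the uniform evaluation $\chi(-1)=(-1)^{(q_K-1)/2}$ can be justified regardless of which ramified quadratic character actually appears.
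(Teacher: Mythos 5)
Your proposal is correct and follows essentially the same route as the paper: the trichotomy via the $F$-module structure on $X(T)\otimes\Q$ forcing $\dim T\in\{0,g\}$, the rigid uniformization giving $\WD_i(\rho_\ell)\cong(\eta\otimes\spe(2))^{\oplus g}$ with $\eta$ quadratic because $F$ is totally real, and the case-by-case evaluation via the standard formula $w(\chi\otimes\spe(2),\psi_K)=\chi(\theta_K(-1))(-1)^{\langle\chi|\1\rangle}$. Your two flagged ``obstacles'' are in fact unproblematic: the $F$-linearity of $\eta$ comes directly from the functoriality of the Néron model, and no pinning down of the ramified quadratic character is needed since for $p\neq 2$ every such character kills $1+\m_K$ and hence satisfies $\chi(-1)=(-1)^{(q_K-1)/2}$; the only step you assert without argument is the dictionary between the ramification of $\eta$ and the reduction type over $K$ itself, which the paper justifies via Grothendieck's inertial criterion and the identification $(V_\ell A)^{I_K}\cong V_\ell\Ac_{k_K}$.
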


\begin{thm}[{Prop.~\ref{prop:odd_dim_rootn}, Cor.~\ref{cor:real_potMult_rootN_formulas}}] \label{thm:exposition_local_even}
If $A/K$ is an abelian variety of even dimension with real multiplication , then $w(A/K)=1$. 
\end{thm}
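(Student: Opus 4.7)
The plan is a case analysis by the reduction type of $A/K$, invoking the trichotomy of Theorem~\ref{thm:exposition_pot_toric} that $A/K$ has either potentially good or potentially totally toric reduction. In the potentially totally toric case, the formula of Theorem~\ref{thm:exposition_pot_toric} directly yields $w(A/K)=1$ when $g$ is even: the split multiplicative value $(-1)^g$ is $+1$ trivially, the non-split value is $+1$ by definition, and in the additive subcase $g(q_K-1)/2=(g/2)(q_K-1)$ is an even integer because $p\neq 2$ makes $q_K-1$ even.

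In the potentially good reduction case, I would first apply Theorem~\ref{thm:exposition_pot_good} whenever $\rho_\ell(I_K)$ is abelian. In case (1) of that theorem, the exponent $g(q_K-1)/e = g\cdot\bigl((q_K-1)/e\bigr)$ is a multiple of the even integer $g$, hence even. In case (2), the divisibility $2g\mid a(A/K)$ lets one write $a(A/K)/2 = gk$ for some $k\in\Z$, and $g(q_K+1)/e$ is likewise a multiple of $g$; both summands in the exponent are then even, so the formula produces $+1$.

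The remaining and most delicate subcase is potentially good reduction with non-abelian $\rho_\ell(I_K)$, which is handled by Proposition~\ref{prop:odd_dim_rootn}. My approach would be to decompose $\rho_\ell$ according to the $F\otimes_\Q\Q_\ell$-module structure into rank-two $\lambda$-adic pieces indexed by the primes $\lambda$ of $F$ above $\ell$, and to write $w(A/K)$ as the product of local root numbers of these pieces. Each piece is a two-dimensional representation commuting with the action of $F_\lambda:=F\otimes_\Q\Q_\ell$ at $\lambda$, and non-abelian inertia should force it (via the standard dichotomy for $\GL_2$-valued representations with non-abelian image) to be induced from a character of an étale quadratic extension. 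The main obstacle is the structural analysis: one has to classify the finite non-abelian subgroups of $\GL_2(F_\lambda)$ that can arise compatibly with the RM constraint, pin down the relevant determinants and conductors, and verify via inductivity of $\e$-factors that the contributions across the various $\lambda$ pair up into squares whenever $g=[F:\Q]$ is even. This structural dichotomy and its combinatorial consequences are exactly what Proposition~\ref{prop:odd_dim_rootn} is expected to encode.
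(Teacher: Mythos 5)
Your handling of the cases where the paper gives explicit formulas (potentially totally toric reduction; potentially good reduction with abelian inertia image) is correct: in each formula the exponent is a multiple of the even integer $g$ (or of $g$ times an integer), so the sign is $+1$. But the case you correctly identify as the crux --- potentially good reduction with non-abelian $\rho_\ell(I_K)$ (and, for that matter, $p=2$) --- is not actually proved in your proposal; you only sketch a strategy, and that strategy points in the wrong direction. No classification of finite non-abelian subgroups of $\GL_2$, no induction from quadratic extensions, and no ``pairing up of contributions across the various $\lambda$'' is needed, and the decomposition over primes $\lambda\mid\ell$ of $F$ is not the relevant one for root numbers: the pieces $V_\lambda$ have $\Q_\ell$-dimension $2[F_\lambda:\Q_\ell]$ and there may be anywhere from $1$ to $g$ of them, so there is no reason for their contributions to square away when $g$ is even.

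The paper's argument (Prop.~\ref{prop:odd_dim_rootn}.(1)) is uniform over all potentially good cases and much softer. The decomposition that matters is over the $g$ complex embeddings $\iota\colon F\hookrightarrow\C$: by Prop.~\ref{prop:rho_iota}, $\W_i(\rho_\ell)=\prod_\iota\rho_\iota$ with each $\rho_\iota$ two-dimensional. Because $F$ is fixed by the Rosati involution, the $F_\ell$-bilinear Weil pairing is alternating and perfect (Prop.~\ref{prop:abelianV_real_det}), so each $\rho_\iota$ is essentially symplectic of weight $1$; combined with Prop.~\ref{prop:root_n_props}.(1),(2) this gives $w(\rho_\iota,\psi_K)^2=\det\rho_\iota(\theta_K(-1))=1$, i.e.\ each factor contributes $\pm1$ --- with no hypothesis on the inertia image. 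The $\rho_\iota$ are $\Aut(\C)$-conjugate, so by Rohrlich's Galois-invariance theorem all $g$ root numbers $w(\rho_\iota,\psi_K)$ coincide, whence $w(A/K)=w(\rho_\iota,\psi_K)^g=1$ for $g$ even (equation \eqref{eq:root_n_glob}). The same two ingredients (each conjugate factor has root number $\pm1$, and the factors' root numbers agree) also dispose of the toric case without invoking the explicit formulas. To repair your proof, replace the structural analysis in your last paragraph by this symplecticity-plus-conjugacy argument.
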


\begin{cor}\label{cor:exposition_global_even}
Let $\K$ be a number field. If $A/\K$ is an abelian variety with real multiplication and $\dim A$ is even, then $w(A/\K)=1$. 
\end{cor}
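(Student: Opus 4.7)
The plan is to decompose the global root number into local factors via Deligne's formula
\[
w(A/\K) \;=\; \prod_v w(A_v/\K_v),
\]
where $v$ ranges over all places of $\K$ and all but finitely many factors equal $1$, and then to check that every local factor is $+1$.

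For every archimedean place the local root number equals $(-1)^{\dim A}$ by the formula of Sabitova recalled in the introduction, and this is $+1$ because $\dim A$ is assumed even. For every non-archimedean place $v$ of residual characteristic $p > 2$, the base change $A\times_{\K}\K_v$ is still an abelian variety of even dimension with real multiplication by the same totally real field $F$, so Theorem~\ref{thm:exposition_local_even} applies verbatim and gives $w(A_v/\K_v) = +1$.

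What remains is the finitely many places $v$ lying above $2$, which fall outside the standing hypothesis $p > 2$ of the local theorems. My approach here would be to revisit the proof underlying Theorem~\ref{thm:exposition_local_even}, namely Proposition~\ref{prop:odd_dim_rootn}: its parity conclusion is built on the $F\otimes\Q_\ell$\mydash module splitting of the $\ell$-adic Tate module into $g$ two-dimensional pieces together with the duality imposed by the polarization pairing, and on inspection neither ingredient requires $p$ to be odd when the conclusion sought is simply $w=1$. The main obstacle is precisely this verification: isolating where the hypothesis $p > 2$ actually enters the local arguments, and confirming it is used only for the finer formulas of Theorems~\ref{thm:exposition_pot_good} and~\ref{thm:exposition_pot_toric} and not for the qualitative parity statement. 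Once this is done, every local factor in Deligne's product equals $+1$ and the corollary drops out immediately.
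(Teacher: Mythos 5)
Your proposal is correct and is essentially the paper's own proof: decompose $w(A/\K)=\prod_v w(A_v/\K_v)$, note the archimedean factors are $(-1)^{\dim A}=1$, and apply Theorem~\ref{thm:exposition_local_even} at every finite place. The "main obstacle" you flag at places above $2$ is not actually there: Theorem~\ref{thm:exposition_local_even} is stated with no restriction on the residue characteristic, and your diagnosis of why is right --- in the potentially good case one only needs $w(A/K)=w(\rho_\iota,\psi_K)^{g}$ together with $w(\rho_\iota,\psi_K)=\pm1$ (Prop.~\ref{prop:rho_iota}.(4)), and in the potentially totally toric case only that $\eta_\iota$ is quadratic, whence $w(\rho'_\iota,\psi_K)=\eta_\iota(\theta_K(-1))\cdot(-1)^{\<\eta_\iota|\1\>}=\pm1$; the hypothesis $p\neq2$ enters only the finer sign formulas, not the parity statement.
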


\begin{proof}
The local root number at each finite place of $\K$ is $1$ by Thm~\ref{thm:exposition_local_even}. At infinite places they are also $1$ by \autocite[Lemma~2.1]{sabitova_root}.
\end{proof}

\begin{rem}\label{rem:real_m_glob}
Let $\K$ be a number field and let $A/\K$ be an abelian variety with real multiplication by $F$. For each finite place $v$ of $\K$, we have a decomposition of associated complex WD-representations $\WD_i(\rho_\ell(A_v/\K_v))=\prod_{\iota}\rho_{v,\iota}'$, where the product is taken over all embeddings $\iota\colon F\hookrightarrow \C$. Each family $(\rho_{v,\iota}')_\iota$ is composed of $\Aut(\C)$-conjugate representations (we prove this for the potentially good reduction case in Prop.~\ref{prop:rho_iota}.(1) and for the remaining cases in \ref{subs:pot_mult_F-rationality}). The root numbers $w(\rho'_{v,\iota},\psi_{\K_v})$ are independent of $\iota$ by \autocite[Thm.~1]{rohrlich_galois_inv}. We choose some $\iota$, fix any place $v$ of $\K$, and define \[w(\iota A_v/\K_v):=\begin{cases} w(\rho'_{v,\iota},\psi_{\K_v}) &\text{if }v\text{ is finite,} \\ -1 & \text{if }v\text{ is infinite.}\end{cases}\] Furthermore, let us define \[w(\iota A/\K):=\prod_v w(\iota A_v/\K_v).\] We have $w(A/\K)=w(\iota A/\K)^{\dim A}$. The number $w(\iota A/\K)$ appears, for example, in the functional equation of a certain $L$-function (see \autocite[(0.1) and \S4.9]{nekovar_comp_l}) and, consequently, in a certain version of the $p$-Parity Conjecture (now a theorem by Neková\v{r}~\autocite[Thm.~D]{nekovar_comp_bt}).
\end{rem}

\subsection{Examples}\label{subs:examples}
 \begin{enumerate}
\item Every elliptic curve has RM by $\Q$. For every case of Theorems~\ref{thm:exposition_pot_good} and \ref{thm:exposition_pot_toric} there is an elliptic curve that falls under that setting. Given an elliptic curve $E/K$, let $\Delta$ be any of its discriminants. The curve $E/K$ has potentially good reduction if and only if its $j$-invariant is in $\O_K$. In this case, the invariant $e$ can be determined from the residue $v_K(\Delta)\bmod 4$, and $\rho_\ell(\Ga_K)$ is abelian if and only if $\Delta\in (K^\times)^2$, see \autocite{kraus}. Furthermore, by applying Tate's algorithm we can compute the Artin conductor $a(E/K)$ from a Weierstrass equation. 
\item The database \autocite{lmfdb} can be used to find many examples of hyperelliptic curves over $\Q$ of genus $2$ whose Jacobians $J/\Q$ have RM. Although the local root numbers are all $w(J_p/\Q_p)=1$, the signs $w(\rho'_{p,\iota},\psi_{\Q_p})$ as in Remark~\ref{rem:real_m_glob} can also be computed by using the theorems of our paper in the case where $\rho_\ell(I_{\Q_p})$ is abelian.
\item Some explicit families of hyperelliptic curves whose Jacobians have RM have been given in \autocite{mestre_real_m}, \autocite{tautz_real_m}. A more general method to construct curves with RM has been presented in \autocite{ellenberg_end}. However, in order to apply our results for these examples, one needs to study the associated $\ell$-adic Galois representations more closely, which is beyond the scope of our paper. 
\item More generally, the classical theory due to Shimura (see, e.g., \autocite[\S9.2]{ab_var_cmplx}) produces moduli spaces of abelian varieties with RM by a prescribed totally real number field. 
\end{enumerate}

\subsection{The structure of the paper} In Section~\ref{sect:l_adic_repr} we recall the general theory of local Weil--Deligne representations and root numbers associated to abelian varieties. In Section~\ref{sect:ab_var_action} we present some properties of $\rho_\ell(A/K)$ implied by the structure of $\End_K(A/K)$ for a general field $K$; we prove some geometric restrictions in the RM case over a $p$-adic field. These results are probably known to specialists but are somewhat difficult to find in the existing literature, so we include the details for the sake of completeness. In Section~\ref{sect:real_AV_potG_red} we continue the study by assuming potentially good reduction. The results of Sections~\ref{sect:ab_var_action} and~\ref{sect:real_AV_potG_red} are then used in Section~\ref{sect:ab_inertia} to prove Thm.~\ref{thm:exposition_pot_good} and in Section~\ref{sect:real_AV_pot_mult} to prove Thm.~\ref{thm:exposition_pot_toric}. 

\subsection{Notation and conventions}

Given a field $K$, we fix a separable closure $\overline{K}$ and denote the absolute Galois group by $\Ga_{K}:=\Gal(\overline{K}/K)$, equipped with the Krull topology. We will suppose implicitly that every separable extension of $K$ mentioned in the text lies inside $\overline{K}$. Note that if $K$ is perfect, then $\overline{K}$ is an algebraic closure of $K$. For a topological ring $R$, we denote by $\Rep_R(\Ga_K)$ the category of continuous $R$-linear finite-rank representations of $\Ga_K$.

Let $K$ be a field equipped with a valuation $v_K$. We write $\O_K$, $\m_K$, and $k_K$ for, respectively, the ring of integers associated to $v_K$, the maximal ideal of $\O_K$, and the residue field $\O_K/\m_K$. Whenever $v_K$ is discrete, we suppose that $v_K$ is normalized, i.e., that $v_K(K^\times)=\Z$. An element $\varpi_K\in K$ of valuation one is called a uniformizer of $K$. 

If $K$ is a local field, then, in particular, $k_K$ is a finite field of some order $q_K$. In this case we have a canonical topological generator $\Frob_{k_K}\colon x\mapsto x^{q_K}$ of $\Ga_{k_K}$, called the arithmetic Frobenius element. Let $\pi\colon\Ga_K\twoheadrightarrow \Ga_{k_K}$ be the surjection induced by an isomorphism $\O_{\overline{K}}/\m_{\overline{K}}\simeq \overline{k}_K$, and denote by $\varphi_K$ a geometric Frobenius lift, i.e., an element in $\Ga_K$ which is sent to $\Frob_{k_K}^{-1}$ via $\pi$. 

For an abelian variety $A/K$ defined over a field $K$, let $A^\vee/K$ be its dual variety, and let $\End_K^0(A):=\End_K(A)\otimes_\Z \Q$. When $K$ is the fraction field of a Dedekind domain $\mathcal{D}$, we denote by $\Ac$ the corresponding Néron model over $\mathcal{D}$, by $\Ac_k$ the special fiber over a residue field $k$, and by $\Ac_k^0$ the identity component of that special fiber. 

\section{From Galois representations to root numbers}\label{sect:l_adic_repr}

Let $p$ be any prime number and let $K/\Q_p$ be a finite extension. We give a brief summary of the general theory of local root numbers, which we will define via Weil--Deligne representations. 

\subsection{Structure of the absolute Galois group}\label{subs:struct_galois} 

The elements of $\Ga_K$ that induce the trivial automorphism of $k_{\overline{K}}$ define a normal closed subgroup $I_K$, the inertia subgroup of $\Ga_K$, which cuts out the maximal unramified extension $K^{\unr}:=\overline{K}^{I_K}$ of $K$. The quotient $\Ga_K/I_K\cong \Ga_{k_K}$ is the profinite completion $\widehat{\<\Frob_{k_K}\>}$ of the infinite cyclic group generated by the Frobenius element. Let $\varphi_K\in \Ga_K$ be a lift of the geometric Frobenius. The closure of the subgroup generated by $\varphi_K$ is the subgroup $\widehat{\<\varphi_K\>}\subset \Ga_K$ isomorphic to $\Ga_{k_K}$. Thus, we have a splitting $\Ga_K=I_K\rtimes\widehat{\<\varphi_K\>}$. 

On the other hand, the elements acting trivially on $\overline{K}^\times\!\!\big/(1+\m_{\overline{K}})$ define a normal pro-$p$ subgroup $I^{\wild}_K$ of $\Ga_K$, called the wild inertia subgroup, which cuts out the maximal tamely ramified extension $K^{\tame}/K$. The tame inertia group is the quotient $I_K^{\tame}:= I_K/I_K^{\wild}$, which is isomorphic to the pro-$p$-complementary completion $\prod_{p'\neq p}\Z_{p'}$ of $\Z$. For every $j\in I_K$ we have $\varphi_K^{-1} j \varphi_K\equiv j^{q_K}\bmod I^{\wild}_K$. An application of the profinite version of the Schur--Zassenhaus theorem (see, e.g., \autocite[Thm.~2.3.15]{rz}) shows that $I^{\tame}_K$ lifts to a profinite subgroup of $I_K$, thus giving a semidirect product structure on $I_K$. Each lift of the tame inertia is determined by a choice of a topological generator $\tau_K\in I_K$. As a consequence of the above discussion, we have \begin{equation}\label{eq:galois_split_easy}\Ga_K=\left(I^{\wild}_K\rtimes \widehat{\<\tau_K\>}'\right)\rtimes \widehat{\<\varphi_K\>},\end{equation}
where $\widehat{\phantom{t}\cdot\phantom{t}}'$ denotes the pro-$p$-complementary completion. We will need the following more refined result. 

\begin{thm}[Iwasawa]\label{thm:galois_split_hard}
For every lift $\tau_K\in I_K$ of a topological generator of $I^{\tame}_K$ we can choose $\varphi_K$ so that we have \[\Ga_K=I_K^{\wild}\rtimes\left(\widehat{\<\tau_K\>}'\rtimes \widehat{\<\varphi_K\>}\right),\] and $\varphi_K^{-1}\tau_K\varphi_K=\tau_K^{q_K}$.
\end{thm}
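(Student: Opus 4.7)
The plan is to start from the splitting in (\ref{eq:galois_split_easy}) and perturb an arbitrary geometric Frobenius lift by a wild inertia element, so as to upgrade the mod-$I_K^{\wild}$ conjugation relation to an exact identity. Fix $\tau_K$ as in the statement and choose any initial geometric Frobenius lift $\varphi_K^{(0)}$. Since it is already established that $\varphi_K^{(0)-1}\tau_K\varphi_K^{(0)}\equiv\tau_K^{q_K}\pmod{I_K^{\wild}}$, we may write
\[\varphi_K^{(0)-1}\tau_K\varphi_K^{(0)} = \tau_K^{q_K}\cdot c\]
for a unique $c\in I_K^{\wild}$. Searching for $\varphi_K = \varphi_K^{(0)}w$ with $w\in I_K^{\wild}$ satisfying $\varphi_K^{-1}\tau_K\varphi_K = \tau_K^{q_K}$, a direct computation rewrites this condition as
\[c = (\tau_K^{-q_K}w\tau_K^{q_K})\cdot w^{-1},\]
that is, as the triviality of the class represented by $c$ in the nonabelian cohomology pointed set $H^1(\widehat{\<\sigma\>}, I_K^{\wild})$, where $\sigma\in\Aut(I_K^{\wild})$ denotes conjugation by $\tau_K^{-q_K}$.

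The next step is to establish this vanishing. The closed subgroup $\widehat{\<\sigma\>}\subset\Aut(I_K^{\wild})$ is a continuous quotient of $\widehat{\<\tau_K\>}'\cong\prod_{\ell\neq p}\Z_\ell$ (since $q_K$ is a unit in this ring), hence pro-(prime to $p$); meanwhile $I_K^{\wild}$ is pro-$p$. On each $\sigma$-stable finite quotient of $I_K^{\wild}$, the action of $\widehat{\<\sigma\>}$ factors through a finite group of order prime to $p$, and the nonabelian $H^1$ of a finite prime-to-$p$ group with coefficients in a finite $p$-group is trivial by Schur--Zassenhaus. Passing to the inverse limit over a cofinal system of such quotients yields the triviality of $H^1(\widehat{\<\sigma\>}, I_K^{\wild})$, so the required $w$ exists. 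The resulting $\varphi_K := \varphi_K^{(0)}w$ is still a geometric Frobenius lift because $w\in I_K$.

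It remains to check the decomposition. With this $\varphi_K$, the relation $\varphi_K^{-1}\tau_K\varphi_K = \tau_K^{q_K}\in\widehat{\<\tau_K\>}'$ (using $q_K\in\Z_\ell^\times$ for all $\ell\neq p$) makes $H := \widehat{\<\tau_K\>}'\rtimes\widehat{\<\varphi_K\>}$ a well-defined topological subgroup of $\Ga_K$. The covering $\Ga_K = I_K^{\wild}\cdot H$ follows immediately from (\ref{eq:galois_split_easy}), while any element of $H\cap I_K^{\wild}\subseteq I_K$ must have vanishing Frobenius part and therefore lies in $\widehat{\<\tau_K\>}'\cap I_K^{\wild} = \{1\}$. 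This produces $\Ga_K = I_K^{\wild}\rtimes H$, as required. The main difficulty is the cohomological vanishing step: the coprime-order argument is immediate on each finite level, but the passage to the profinite limit requires identifying a cofinal system of $\sigma$-stable finite quotients of $I_K^{\wild}$, a standard technical point guaranteed by the continuity of the $\sigma$-action on the pro-$p$ group $I_K^{\wild}$.
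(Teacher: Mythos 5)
Your overall strategy --- perturb $\varphi_K^{(0)}$ by a wild element and reduce to a Schur--Zassenhaus-type statement --- is the right one, and it is essentially how the paper proceeds (it quotes Iwasawa's Lemma~4, whose proof is this very argument). However, the crucial step, the solvability of $c=(\tau_K^{-q_K}w\tau_K^{q_K})\,w^{-1}$, does not follow from the triviality of $H^1(\widehat{\<\sigma\>},I_K^{\wild})$ as you have framed it. Triviality of that pointed set says that every continuous \emph{cocycle} is a coboundary; it does not say that the twisted-coboundary map $w\mapsto\sigma(w)w^{-1}$ is surjective onto $I_K^{\wild}$. That map is in general far from surjective: on a finite abelian $\sigma$-stable quotient $P$ on which $\sigma$ has order $n$, its image is $\ker\bigl(1+\sigma+\cdots+\sigma^{n-1}\bigr)$, which equals $P$ only when $P^{\sigma}=0$; in the extreme case where $\sigma$ acts trivially on $P$ the image is $\{1\}$. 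So before Schur--Zassenhaus can be invoked you must check that your specific $c$ is the value at $\sigma$ of a genuine continuous cocycle, i.e., that the norm-type compatibility $c\cdot\sigma(c)\cdots\sigma^{n-1}(c)=1$ holds on each finite level. This is precisely the point your write-up skips, and it is where the structure of the problem enters.

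The missing input is the observation that $\tau_K^{q_K}c=\varphi_K^{(0)-1}\tau_K\varphi_K^{(0)}$ topologically generates $\varphi_K^{(0)-1}\widehat{\<\tau_K\>}'\varphi_K^{(0)}$, which is \emph{another complement} to $I_K^{\wild}$ in $I_K$ (both $I_K^{\wild}$ and $I_K$ being normal in $\Ga_K$); it is this fact that makes the associated function a cocycle, so that the conjugacy part of the profinite Schur--Zassenhaus theorem applies. Concretely, that theorem yields $u\in I_K^{\wild}$ with $u^{-1}\varphi_K^{(0)-1}\widehat{\<\tau_K\>}'\varphi_K^{(0)}u=\widehat{\<\tau_K\>}'$; setting $\varphi_K:=\varphi_K^{(0)}u$, the element $\varphi_K^{-1}\tau_K\varphi_K$ lies in $(\tau_K^{q_K}I_K^{\wild})\cap\widehat{\<\tau_K\>}'=\{\tau_K^{q_K}\}$, which is exactly the paper's route. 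Your concluding verification of the decomposition $\Ga_K=I_K^{\wild}\rtimes(\widehat{\<\tau_K\>}'\rtimes\widehat{\<\varphi_K\>})$ is correct as written.
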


\begin{proof}
This is essentially \autocite[Thm.~2]{iwasawa}. We first choose any lifts $\varphi_K\in\Ga_K$ and $\tau_K\in I_K$, for which \eqref{eq:galois_split_easy} holds. Following \autocite[Lemma~4]{iwasawa}, we can modify $\varphi_K$ so that $\varphi_K^{-1}\cdot\widehat{\<\tau_K\>}'\cdot\varphi_K=\widehat{\<\tau_K\>}'$. Then, $\varphi_K^{-1}\tau_K\varphi_K$ is in $ (\tau_K^{q_K}I_K^{\wild})\cap \widehat{\<\tau_K\>}'=\{\tau_K^{q_K}\}$. The closed subgroup $H:=\widehat{\<\tau_K\>}'\cdot \widehat{\<\varphi_K\>}\subset\Ga_K$ has the desired semi-direct product structure. Also, we have $I_K^{\wild}H=\Ga_K$, and $H\cap I_K^{\wild}$ is trivial, thus proving the theorem.
\end{proof}

\subsection{Weil groups and representations} Let $K'/K$ be a Galois extension such that $K^{\unr}\subseteq K'$. We have an isomorphism $k_{K'}\simeq \overline{k}_K$. The arguments used to obtain \eqref{eq:galois_split_easy} apply similarly to show that the exact sequence \[0\to I(K'/K)\to \Gal(K'/K)\xrightarrow{\pi} \Ga_{k_K}\to 0\] splits, giving a semidirect product structure 
\begin{equation}\label{eq:galois_split}\Gal(K'/K)=I(K'/K)\rtimes\widehat{\<\varphi_K\>}\end{equation}
where $I(K'/K):=I_{K}/I_{K'}\subset \Gal(K'/K)$ is the inertia subgroup. The Weil group of $K'/K$ is defined as \[W(K'/K):=\pi^{-1}(\<\Frob_{k_K}\>).\] In other words, $W(K'/K)$ consists of the elements of $\Gal(K'/K)$ that $\pi$ sends to $\Frob_{k_K}^{n}$ for some $n\in\Z$. We note that 
\begin{equation}\label{eq:weil_gr_split} W(K'/K)=I(K'/K)\rtimes\<\varphi_K\>.\end{equation}
 We equip $W(K'/K)$ with the topology generated by the open subgroups of $I(K'/K)$ and their translates. We denote $W_K:=W(\overline{K}/K)$. 

Let $F$ be a field of characteristic zero. An $F$-linear Weil representation is defined as a continuous representation $\rho\colon W_K\to\GL(V)$ where $V$ is a finite-dimensional $F$-vector space, and $\GL(V)$ is equipped with the discrete topology. Continuity is equivalent to $\rho(I_K)$ being a finite group. Let us note that if $F=\C$, then we get an equivalent definition if we equip $\GL(V)$ with the standard topology (see \autocite[\S2]{rohrlich}). The category of $F$-linear Weil representations over $K$ will be denoted by $\Rep_F(W_K)$. Every $f\in\End(V)$ gives the transpose endomorphism $f^\tra\in\End(V^*)$ on the dual vector space. The dual representation of $\rho\in\Rep_F(W_K)$ is given by $\rho^*(g):=\rho(g^{-1})^\tra$ for every $g\in W_K$. 

\subsection{Unramified Weil characters}\label{subs:unram_char} Let $R$ be a ring in which $p$ is invertible. We define the cyclotomic character $\omega_K\colon W_K\to R^{\times}$ to be the unramified character such that $\omega_K(\varphi_K)=q_K^{-1}$. When $R=\C$, for any $z\in\C$, we define the unramified complex Weil character $\omega_K^z$ by taking $b=-\ln(q_K)$ and then setting $\omega_K^z(\varphi_K)=\exp(zb)$. We note that any unramified complex Weil character is of the form $\omega_K^z$ for some $z\in\C$. For a complex Weil representation $\rho$ on $V$, we define the Tate twist $\rho(z):=\rho\otimes\omega_K^z$ for any $z\in \C$ and denote the underlying vector space by $V(z)$. 

\subsection{Weil--Deligne representations} Let $F$ be a field of characteristic ze\-ro. An $F$-linear Weil--Deligne (WD for short) representation is a pair $(\rho,N)$ where $\rho$ is an $F$-linear Weil representation on $V$ and $N$ is a nilpotent $F$-endomorphism of $V$, called the monodromy operator, satisfying $\rho N \rho^{-1}=\omega_K N$. The $F$-linear WD-representations form a category $\Rep_F(W'_K)$ where the morphisms between two WD-representations are the morphisms between their respective Weil representations that commute with the monodromy operators. Trivially, there is an equivalence between the category $\Rep_F(W_K)$ and the full subcategory of $F$-linear WD-representations with trivial monodromy. If $\rho'=(\rho,N)$ and $\s'=(\s,P)$ are two WD-representations, then we define their direct sum as $\rho'\oplus\s'=(\rho\oplus\s, N\oplus P)$, their tensor product as $\rho'\otimes\s'=(\rho\otimes\s, N\otimes \id+\id\otimes P)$, and the dual representation of $\rho'$ as $(\rho')^*=(\rho^*, -N^{\tra})$.

\begin{rem}
As in \autocite[\nopp 8.3.6]{deligne_eq_fonctionelle}, one may define the Weil--Deligne group $W_K'$ as a group scheme over $\Q$ that is the semidirect product of $W_K$ by $\G_a$ where for every $\Q$-algebra $R$, every $x\in \G_a(R)$, and every $\s\in W_K$ we have $\s x \s^{-1}=\omega_K(\s)x$. A representation of the group scheme $W_K'$ over a field of characteristic 0 may be shown to correspond to a pair $(\rho,N)$ as above. 
\end{rem}

\subsection{$\ell$-adic monodromy}\label{subs:l_adic_monodromy} Let $\ell\neq p$ be a prime number, $\varphi_K$ a geometric Frobenius lift, and $t_\ell\colon I_K\to \Q_\ell$ a nontrivial continuous homomorphism. Grothendieck's $\ell$-adic monodromy theorem (see \autocite[\nopp 8.4.2]{deligne_eq_fonctionelle}) provides a fully faithful functor
\begin{align*}\WD\colon \Rep_{\Q_\ell}(\Ga_K) & \to \Rep_{\Q_\ell}(W'_K) \\ \rho_\ell &\mapsto (\W(\rho_\ell), N_{\rho_\ell}).\end{align*} The construction of $\W(\rho_\ell)$ depends on the choice of $\varphi_K$, and $N_{\rho_\ell}$ is the unique nilpotent endomorphism such that $\rho_\ell(j)=\exp(t_\ell(j) N_{\rho_\ell})$ for every $j$ in a sufficiently small open subgroup of $I_K$. In particular, if $\rho_\ell(I_K)$ is finite, then $N_{\rho_\ell}=0.$ By \autocite[\nopp 8.4.3]{deligne_eq_fonctionelle}, the isomorphism class of $\WD(\rho_\ell)$ is independent of the choices of $t_\ell$ and $\varphi_K$. 

\subsection{Complex WD-representations}\label{subs:complex_WD} Let $\rho_\ell\in \Rep_{\Q_\ell}(\Ga_K)$. We fix an embedding $i\colon \Q_\ell\to \C$ for the rest of the text. By extending the scalars of $\WD(\rho_\ell)$ to $\C$ via $i$ we obtain a complex WD-representation $\WD_i(\rho_\ell)=(\W_i(\rho_\ell), N_{i,\rho_\ell})$. As the notation indicates, the isomorphism class of $\WD_i(\rho_\ell)$ generally depends on $i$.

\subsection{Geometric $\ell$-adic representations}\label{subs:l_adic_rep_av} Let $K$ be any field with a separable closure $\overline{K}$, and let $\ell\neq\cha(K)$ be a prime. If $X/K$ is a smooth and proper variety, then we have a natural $\ell$-adic Galois representation on the ($m$-th) $\ell$-adic cohomology group $H_{\et}^m(X_{\overline{K}},\Z_\ell)\otimes_{\Z_\ell}\Q_\ell$ for each $m\geq 0$. For a commutative algebraic group $G$ over $K$, one defines its $\ell$-adic Tate module as the projective limit of $\overline{K}$-valued $\ell^n$-torsion points \[T_\ell G=\varprojlim_n G(\overline{K})[\ell^n]\] equipped with the $\ell$-adic topology. The natural $\Ga_K$-action on $T_\ell G$ is continuous and defines a $\Q_\ell$-linear representation on $V_\ell G:=T_\ell G\otimes_{\Z_\ell}\Q_\ell$. For an abelian variety $A/K$ of dimension $g$, the $\Z_\ell$-module $T_\ell A$ is free of rank $2g$, and its dual $(T_\ell A)^*$ is canonically isomorphic to $H^1_{\et}(A_{\overline{K}},\Z_\ell)$ in $\Rep_{\Z_\ell}(\Ga_K)$. We will denote the corresponding $\ell$-adic Galois representation on $(V_\ell A)^*$ by $\rho_\ell(A/K)$ or simply by $\rho_\ell$ when the context is clear. 

The multiplicative group $\G_m/K$ induces a $\Z_\ell$-linear Galois representation on $T_\ell \G_m$ of dimension one, which corresponds to a character $\omega_K\colon\Ga_K\to \Z_\ell^\times$. For every $n\in\Z$ and every $\ell$-adic representation $\rho$ on $V$, we define the Tate twist by $\rho(n):=\rho\otimes\omega^n_K$ and denote its underlying space by $V(n)$. 

For the rest of Section~\ref{sect:l_adic_repr} we return to the case where $K$ is a finite extension of $\Q_p$ and $\ell\neq p$. Restricting the $\Ga_K$-action on $T_\ell\G_m$ to $W_K$ we obtain exactly the unramified character $\omega_K\colon W_K\to\Z_\ell^\times$ of \ref{subs:unram_char}. 

\subsection{Reduction of abelian varieties} Let $K/\Q_p$ be a finite extension. An abelian variety $A/K$ is said to have good (resp. semistable, synonymous with semi-abelian) reduction if the identity component $\Ac_{k_K}^0$ of the special fiber of the Néron model is an abelian variety (resp. a semi-abelian variety). More generally, the reduction is potentially good (resp. potentially semistable) if there exists a finite extension $L/K$ such that $A/L$ has good (resp. semistable) reduction. We will say that $A/K$ has bad reduction if it does not have good reduction. Recall the following well-known results. 

\begin{thm}\label{thm:ab_var_reduction} An abelian variety $A/K$:
\begin{enumerate}
\item has good reduction if and only if $\rho_\ell$ is unramified, i.e., if and only if $I_K$ acts trivially on $V_\ell A$ (Néron--Ogg--Shafarevich criterion);
\item has semistable reduction if and only if $\rho_\ell|_{I_K}$ is unipotent (Gro\-then\-dieck's inertial criterion);
\item always has potentially semistable reduction (Grothendieck's semistable reduction theorem).
\end{enumerate}
\end{thm}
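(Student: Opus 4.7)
All three statements are classical and I would not attempt to reprove them from scratch: the full arguments are due to Serre--Tate and Grothendieck (SGA~7), and I would cite those sources rather than reproduce them. For a brief sketch of the strategy:

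For (1), the forward direction is short. If $A/K$ has good reduction, then the Néron model $\Ac/\O_K$ is an abelian scheme and the finite flat group scheme $\Ac[\ell^n]$ is étale over $\O_K$ because $\ell\neq p$. Consequently the $\Ga_K$-action on $T_\ell A = \varprojlim_n \Ac[\ell^n](\overline{k}_K)$ factors through the quotient $\Ga_{k_K}$, so $I_K$ acts trivially. For the converse, one uses the Chevalley decomposition of $\Ac^0_{k_K}$ as an extension of an abelian variety $B$ by a linear algebraic group $L=T\times U$ (torus times unipotent, over $\overline{k}_K$) together with the identity $\dim(V_\ell A)^{I_K} = 2\dim B + \dim T$; triviality of the inertia action on the full $2g$-dimensional space then forces $L=0$ and $\dim B=g$, so $\Ac^0_{k_K}$ is already an abelian variety, and a properness argument on the Néron model upgrades this to good reduction of $A$ itself.

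Part (2) rests on the same decomposition, semistability being equivalent to the vanishing of $U$. Semistability produces a short $\Ga_K$-stable filtration on $V_\ell A$ whose graded pieces have trivial inertia action (using, crucially, that the $\ell$-adic cyclotomic character is unramified at $K$ because $\ell\neq p$), so $\rho_\ell|_{I_K}$ is unipotent. The converse direction, tracing how a non-trivial $U$ obstructs unipotence of the inertia action, is the technical heart of Grothendieck's argument and would be the main obstacle in any independent treatment; it relies on a precise comparison between the formal completion of the Néron model and the $p$-divisible group of $A$, and on the full Chevalley decomposition being $\Ga_K$-equivariantly reflected on $V_\ell A$.

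Finally, (3) is a clean consequence of the $\ell$-adic monodromy theorem recalled in~\ref{subs:l_adic_monodromy}: the inertia action on $V_\ell A$ is quasi-unipotent, so there is an open subgroup $J\subseteq I_K$ on which $\rho_\ell$ is unipotent. Taking a finite Galois extension $L/K$ inside $\overline{K}$ with $I_L\subseteq J$, part (2) applied to $A/L$ produces semistable reduction over $L$, hence potential semistability over $K$.
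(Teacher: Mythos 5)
Your proposal matches the paper exactly: the paper's proof consists solely of citations to Serre--Tate and SGA~7 for the three parts, which is precisely what you do, and your accompanying sketches of the standard arguments are accurate. No issues.
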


\begin{proof}
For (1) see \autocite[Thm.~1]{serre_tate}; for (2) see \autocite[p.~350, Prop.~3.5]{sga7_i}; for (3) see \autocite[p.~21, Thm.~6.1]{sga7_i}.
\end{proof}

\begin{definition}\label{def:im}
Given an abelian variety $A/K$ with potentially good reduction, an extension $L'/K$ will be called \emph{inertially minimal (IM) for} $A/K$ if $L'K^{\unr}=M$ where $M/K^{\unr}$ is the extension cut out by $\ker\rho_\ell|_{I_K}$.  
\end{definition}

\begin{lem}\label{lem:min_ext_good} Let $A/K$ be an abelian variety with potentially good reduction. Let $M/K$ be as in Def.~\ref{def:im}. Then 
\begin{enumerate} 
\item An extension $L'/K$ is IM for $A/K$ if and only if $I_{L'}=\ker\rho_\ell|_{I_K}=I_M$. This is also equivalent to the condition that $A$ has good reduction over $L'$ and has bad reduction over any smaller extension $L''/K$ such that $L'/L''$ is ramified. 
\item The extension $M/K$ is Galois; 
\item For any choice of a Frobenius lift $\varphi_K$, there exists a finite totally ramified extension $L'/K$ which is fixed by $\varphi_K$ and is IM for $A/K$; 
\item Let $L'/K$ be an IM-extension for $A/K$ and let $L/K$ be its Galois closure in $\overline{K}$. Then $L/K$ is also IM for $A/K$. In particular, $L/L'$ is unramified. 
\end{enumerate}
\end{lem}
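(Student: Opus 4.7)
The plan is to address the four parts in the order (2), (1), (3), (4), since (2) feeds into the proofs of (1) and (4), and (1) provides a clean group-theoretic reformulation of the IM property that underlies (3) and (4). Throughout I will use the identification $L^{\unr}=LK^{\unr}$ for a finite extension $L/K$, so that $I_L=\Gal(\overline{K}/LK^{\unr})$, together with the following reading of Thm.~\ref{thm:ab_var_reduction}(1): $A$ has good reduction over $L$ iff $I_L\subseteq\ker\rho_\ell|_{I_K}$. In particular, $I_M=\ker\rho_\ell|_{I_K}$ since $M\supseteq K^{\unr}$.

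For (2), I would observe that for any $g\in\Ga_K$ and $j\in\ker\rho_\ell|_{I_K}$, normality of $I_K$ in $\Ga_K$ gives $gjg^{-1}\in I_K$, while $\rho_\ell(gjg^{-1})=\id$ uses that $\rho_\ell$ is defined on the full $\Ga_K$; hence $\ker\rho_\ell|_{I_K}$ is normal in $\Ga_K$ and $M/K$ is Galois. For (1), the equivalence between IM and $I_{L'}=I_M$ is immediate from the Galois correspondence applied to $L'K^{\unr}=M$. For the reduction-theoretic characterization, I would first note that $L'/L''$ is ramified iff $I_{L''}\not\subseteq I_{L'}$ (since unramified means $L'\subseteq L''K^{\unr}$, equivalently $I_{L''}\subseteq\Ga_{L'}$, and $I_{L''}\subseteq I_K$). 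The forward implication is then routine. For the converse, assuming good reduction over $L'$ with $I_{L'}\subsetneq I_M$, I would set $L'':=L'\cap M$, which is a strict subfield of $L'$ (otherwise $L'\subseteq M$ forces $L'K^{\unr}\subseteq M$, contradicting $I_{L'}\subsetneq I_M$); then normality of $I_M$ from (2) yields $\Ga_{L''}=I_M\cdot\Ga_{L'}$, and intersecting with $I_K$ gives $I_{L''}=I_M\cdot I_{L'}=I_M$. Thus $A/L''$ has good reduction while $L'/L''$ is ramified, contradicting the minimality hypothesis.

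For (3), potentially good reduction makes $\rho_\ell(I_K)$ finite, so $M/K^{\unr}$ is a finite extension; together with (2) this shows $M/K$ is Galois, and \eqref{eq:galois_split} gives $\Gal(M/K)=I(M/K)\rtimes\widehat{\<\varphi_K\>}$ with $I(M/K)$ finite. Taking $L':=M^{\widehat{\<\varphi_K\>}}$ produces a finite extension of $K$ of degree $|I(M/K)|$, pointwise fixed by $\varphi_K$, totally ramified (since $L'\cap K^{\unr}=M^{I(M/K)\cdot\widehat{\<\varphi_K\>}}=M^{\Gal(M/K)}=K$), and IM since $L'K^{\unr}$ corresponds to $\widehat{\<\varphi_K\>}\cap I(M/K)=\{1\}$. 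For (4), $M/K$ being Galois by (2) and containing $L'$ forces it to contain the Galois closure $L$ of $L'/K$; combined with $LK^{\unr}\supseteq L'K^{\unr}=M$, one obtains $LK^{\unr}=M$, so $L$ is IM. Then $I_L=I_{L'}$ implies $L^{\unr}=L'^{\unr}$, and the chain $L'\subseteq L\subseteq L'^{\unr}$ shows $L/L'$ is unramified.

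The principal obstacle is the converse direction of the second equivalence in (1): it rests entirely on the normality of $\ker\rho_\ell|_{I_K}$ in $\Ga_K$ (i.e., on (2)) in order to compute $I_{L'\cap M}=I_M$. Without this normality, the intersection $L'\cap M$ would not obviously witness good reduction of $A$, and no smaller ``good'' subextension of $L'$ could be extracted from a strict containment $I_{L'}\subsetneq I_M$.
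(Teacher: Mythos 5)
Your proposal is correct and follows essentially the same route as the paper: normality of $\ker\rho_\ell|_{I_K}=I_K\cap\ker\rho_\ell$ for (2), the fixed field of the open subgroup $I_M\cdot\widehat{\<\varphi_K\>}$ for (3), and $L\subseteq M$ (hence $I_M\subseteq I_L\subseteq I_{L'}=I_M$) for (4). Your more detailed argument for the converse of the second equivalence in (1), via $L''=L'\cap M$ and $\Ga_{L''}=I_M\cdot\Ga_{L'}$, simply fills in what the paper compresses into ``a reformulation of the Néron--Ogg--Shafarevich criterion.''
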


\begin{proof}
The first equivalence of (1) follows from Galois theory, and the second is a reformulation of the Néron--Ogg--Sha\-fa\-re\-vich criterion. For (2) we observe that $\ker\rho_\ell|_{I_K}=I_K\cap \ker\rho_\ell$, which is normal in $\Ga_K$. For any $\varphi_K$, the subgroup $I_M\cdot{\widehat{\vphantom{i}\smash{\<\varphi_K\>}}}\subseteq \Ga_K$ (cf.~\eqref{eq:galois_split_easy}) is closed (as a product of two compact subgroups) and has finite index, thus is open. It cuts out a finite totally ramified extension $L'/K$ with $I_{L'}=I_M$, so (3) follows. The part (4) follows from the observation that $L\subset M$ (since $M/K$ is Galois), which implies that $I_{L'}=I_M\subseteq I_L\subseteq I_{L'}$.\end{proof}

\subsection{$p$-adic uniformization}\label{subs:p_adic_unif} Let $A/K$ be an abelian variety of dimension $g$. As stated in \autocite[Prop.~3.1]{chai_semiab}, there exists a semi-abelian variety $E/K$ of dimension $g$, defined by a Raynaud extension $0\to T\to E\to B\to 0$ where
\begin{enumerate}[label=(\roman*)]
\item $T$ is a torus over $K$,
\item $B$ is an abelian variety over $K$ with potentially good reduction,
\item the rigid analytification of $A/K$ is the rigid analytic quotient of the analytification of $E/K$ by a free Galois sub-$\Z$-module $M\subset E(\overline{K})$ of rank $r=\dim T$. 
\end{enumerate}

We denote $(\kappa,0)=\WD_i(\rho_\ell(B/K))$, note that the monodromy is trivial by Thm.~\ref{thm:ab_var_reduction}.(1) and \ref{subs:l_adic_monodromy}. Let $X(T):=\Hom_{\overline{K}}(T,\G_m)$, and let $\eta\colon W_K\to \GL(X(T)_\Q)$ be the representation given by the Galois action on $X(T)_\Q:=X(T)\otimes_{\Z}\Q$, which has finite image. 

\begin{prop}[{\autocite[Prop.~1.10]{sabitova_root}}]\label{prop:sabitova}
There is an isomorphism of complex  WD\mydash{}representations \[\WD_i(\rho_\ell(A/K))\cong(\kappa, 0)\oplus \left(\eta(-1)\otimes\spe(2)\right),\] where $\spe(2)=\left(\1\oplus \omega_K, \left( \begin{smallmatrix} 0&0\\ 1&0 \end{smallmatrix} \right)\right)$ is the so-called special WD-representation. The representations $\W_i(\rho_\ell(B/K))$ and $\W_i(\rho_\ell(A/K))$ are semisimple, and the isomorphism classes of $\WD_i(\rho_\ell(B/K))$ and $\WD_i(\rho_\ell(A/K))$ are independent of $\ell$ and $i$.
\end{prop}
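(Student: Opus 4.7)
The plan is to read off the complex WD-representation from two natural short exact sequences of $\ell$-adic $\Ga_K$-modules arising from the $p$-adic uniformization, combined with Grothendieck's monodromy theorem of \ref{subs:l_adic_monodromy}. The Raynaud extension $0\to T\to E\to B\to 0$ yields
$$0\to V_\ell T\to V_\ell E\to V_\ell B\to 0,$$
while the rigid analytic quotient $A^{\an}=E^{\an}/M$, via the snake lemma on multiplication by $\ell^n$ (applicable since $M$ is torsion-free and $E(\overline K)$ is $\ell$-divisible) and a limit, produces
$$0\to V_\ell E\to V_\ell A\to M\otimes_\Z\Q_\ell\to 0.$$
Dualizing, $V_\ell A^*=H^1_{\et}(A_{\overline K},\Q_\ell)$ acquires a three-step $\Ga_K$-stable filtration $0\subset F_0\subset F_1\subset F_2=V_\ell A^*$ with $F_0=(M\otimes\Q_\ell)^*$, $F_1/F_0\cong V_\ell B^*$, and $F_2/F_1\cong V_\ell T^*$.

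I would next identify each graded piece as a complex Weil representation via $i$. The middle contributes $\kappa=\W_i(\rho_\ell(B/K))$ by definition, with trivial monodromy by Thm.~\ref{thm:ab_var_reduction}.(1) applied to $B$ after an extension making its reduction good. Using $V_\ell T=X^*(T)\otimes V_\ell\G_m$ with $X^*(T)=\Hom_{\overline K}(\G_m,T)$, the top quotient becomes $X(T)\otimes\omega_K^{-1}$, which complexifies to $\eta(-1)$. The theory of Raynaud uniformization provides a canonical $W_K$-equivariant perfect pairing between $M$ and $X(T)$ (the monodromy pairing), through which double duality identifies $(M\otimes\Q_\ell)^*$ with $\eta$.

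Choosing a finite Galois $L/K$ over which $A/L$ becomes semistable (possible by Thm.~\ref{thm:ab_var_reduction}.(3)), inertia $I_L$ acts trivially on all three graded pieces and hence unipotently on $V_\ell A^*$; Grothendieck's theorem then produces the monodromy $N$ with $\rho N\rho^{-1}=\omega_K N$. The relation forces $N$ to lower Frobenius weight by $2$, and since the graded pieces $\eta$, $\kappa$, $\eta(-1)$ carry pure weights $0$, $1$, $2$, this compels $N$ to vanish on $F_1$ and to restrict to a $W_K$-equivariant map $\eta(-1)\to\eta$; the explicit uniformization pairing shows this map is an isomorphism. To split the Weil part as $\kappa\oplus\eta\oplus\eta(-1)$, I use that Frobenius acts semisimply on each graded piece (on $\kappa$ by the Weil conjectures and Frobenius semisimplicity for abelian varieties, on the other two because $\eta$ has finite image), so the distinct weights preclude nonzero $W_K$-extensions between them. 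Repackaging $\eta\oplus\eta(-1)$ with $N$ as a single WD-representation identifies it with $\eta(-1)\otimes\spe(2)$; combined with $(\kappa,0)$ this yields the announced decomposition. Independence of $\ell$ and $i$ follows because the characters of $\kappa$, $\eta$, and $\eta(-1)$ take values in a number field.

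The main obstacle I expect is establishing the $W_K$-equivariant perfect pairing between $M$ and $X(T)$ and verifying that the Grothendieck monodromy acts through it: this amounts to tracing the rigid analytic period data through the snake lemma and matching it with the $\ell$-adic monodromy construction, which is the technical heart of Raynaud's theory of $p$-adic uniformization. Once this compatibility is in hand, the rest is a weight-filtration splitting argument.
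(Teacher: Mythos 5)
The paper gives no proof of this proposition: it is quoted verbatim from \autocite[Prop.~1.10]{sabitova_root}, so there is no internal argument to compare yours against. Your sketch reconstructs, correctly in outline, the standard proof that the cited source (resting on SGA~7~IX and Raynaud's uniformization) supplies: the two exact sequences give the right three-step filtration on $(V_\ell A)^*$ with graded pieces $\eta$, $\kappa$, $\eta(-1)$ of weights $0$, $1$, $2$; the weight constraint $\rho N\rho^{-1}=\omega_K N$ kills $N$ on the first two steps; Frobenius-semisimplicity of the graded pieces plus distinct weights splits the Weil part; and the repackaging as $(\kappa,0)\oplus(\eta(-1)\otimes\spe(2))$ matches the paper's own equation \eqref{eq:pot_tor_WD}. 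Two points deserve flagging. First, the step you yourself identify as the technical heart --- the $W_K$-equivariant perfect pairing between $M$ and $X(T)$ and the identification of Grothendieck's $N$ with it, which is what forces $N\colon\eta(-1)\to\eta$ to be an \emph{isomorphism} rather than merely a map --- is genuinely non-formal; without it the conclusion could degenerate to extra $\eta\oplus\eta(-1)$ summands with trivial monodromy. This is exactly the input that \autocite{sabitova_root} and SGA~7 provide, so invoking them there is legitimate, but it should be an explicit citation rather than a promissory note. Second, semisimplicity of $\kappa$ as a representation of $W_K$ (not just of $W_L$ for $L$ a good-reduction extension) requires the finite-index descent argument in characteristic zero; the paper itself runs precisely this argument in Prop.~\ref{prop:F_mod_semis}, and your appeal to ``Frobenius semisimplicity for abelian varieties'' should be routed through it. With those two references made precise, your route is the same as the literature's and is sound.
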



\begin{rem}\label{rem:p-adic_unif_funct}
\begin{enumerate}
\item[] 
\item The Galois modules $X(T)_\Q$ and $M\otimes\Q$ are isomorphic, see \autocite[Lemma~1.11]{sabitova_root}. 
\item Let us recall a few ideas behind the construction of $T$ in order to give a direct description of $\eta$. By Thm.~\ref{thm:ab_var_reduction}.(3), there exists a finite Galois extension $L/K$ such that the identity component $\Ac_{k_L}^\circ$ of the special fiber of the Néron model of $A/L$ is a semi-abelian variety. By replacing $L$ with a larger extension, we may suppose that the maximal subtorus $\widetilde{T}$ of $\Ac_{k_L}^0$ is split. By the universal property of the Néron model, the $k_L$-scheme $\widetilde{T}$ is equipped with a $k_L$-semilinear $\Gal(L/K)$-action. The resulting $\Ga_K$-representation on $X(\widetilde{T})_{\Q}=\Hom_{k_L}(\widetilde{T},\G_m)\otimes_{\Z}\Q$ is isomorphic to $\eta$.  
\end{enumerate}
\end{rem}

\subsection{Local reciprocity} Local class field theory provides a reciprocity isomorphism for any finite Galois extension $L/K$ of local fields: \[\theta_{L/K}:\bigslant{K^\times}{\norm_{L/K}(L^\times)}\iso \Gal(L/K)^{\ab}.\] There exist exactly two canonical choices for $\theta_{L/K}$, and they are inverses of each other. Following Deligne, we normalise $\theta_{L/K}$ by supposing that it sends the class of a uniformizer of $K$ to the class of a geometric Frobenius lift.  
Using the projective limit of these reciprocity maps as well as the Existence Theorem we obtain (see, e.g., \autocite[Cor.~2.5]{knapp}) an isomorphism \[\theta_K:K^\times\iso W_K^{\ab}.\] 

\subsection{The epsilon factor}\label{subs:eps_factor} We fix a nontrivial locally constant additive character $\psi_K\colon K\!\to\!\C^{\times}$ and a Haar measure $\diff x_K$ on $K$. We let $n(\psi_K):=\max\{n\in\Z\,|\, \psi_K(\m_K^{-n})=1\}$. For a complex WD-representation $\rho'=(\rho,N)$ with underlying vector space $V$, we define the $\e$-factor of $\rho'$ as the product \[\e(\rho',\psi_K,\diff x_K):=\e(\rho,\psi_K,\diff x_K)\delta(\rho'),\] where \[\delta(\rho'):=\det\left(-\rho(\varphi_K)\Big|\bigslant{V^{I_K}\!\!}{(\ker N)^{I_K}}\!\!\right)\] and $\e(\cdot,\psi_K,\diff x_K)$ is the unique function satisfying the following axioms (see \autocite[Théo\-rème~4.1]{deligne_eq_fonctionelle}):
\begin{enumerate}[label=(\roman*)]
\item The map $\e(\cdot,\psi_K,\diff x_K)\colon \Rep_\C(W_K)\to\C^\times$ is multiplicative in short exact sequences of Weil representations, i.e., it induces a group homomorphism from the Grothendieck group of virtual complex Weil representations to $\C^\times$. 
\item For any finite extension $L/K$, any $\diff x_L$, and any $\rho\in\Rep_\C(W_L)$ we have \[\epsilon(\Ind_{W_L}^{W_K}\!\rho,\psi_K,\diff x_K)=\epsilon(\rho,\psi_K\circ\Tr_{L/K},\diff x_L)\!\left(\!\frac{\epsilon(\Ind_{W_L}^{W_K}\!\1_{W_L},\psi_K,\diff x_K)}{\epsilon(\1_{W_L},\psi_K\circ\Tr_{L/K},\diff x_L)}\!\!\right)^{\!\dim\rho}\!\!.\]
\item For any finite extension $L/K$, any $\psi_L$, any $\diff x_L$, and any one\mydash{}dimensional $\xi\in\Rep_{\C}(W_L)$ with Artin conductor $a(\xi)$ we have  \[\epsilon(\xi,\psi_L,\diff x_L)=\begin{cases}
\int_{c^{-1}\O_L^\times}\xi^{-1}(\theta_L(x))\psi_L(x)\diff x_L &\text{ if $\xi$ is ramified},\\
(\xi\omega_L^{-1})(\theta_L(c))\int_{\O_L}\diff x_L &\text{ if $\xi$ is unramified},
\end{cases}\] 
where $c\in L$ is any element of valuation $n(\psi_L)+a(\xi)$.
\end{enumerate} 
The condition (iii) expresses the fact that the $\e$-factor is the coefficient in Tate's local functional equation of the $L$-function of $\xi$, as described in \autocite[\S3]{tate_background}.

\subsection{The root number}\label{subs:root_n} Given a $\rho'=(\rho,N)\in\Rep_{\C}(W'_K)$, we define its root number as \[w(\rho',\psi_K):=\frac{\e(\rho',\psi_K,\diff x_K)}{|\e(\rho',\psi_K,\diff x_K)|}.\] It follows from basic properties of $\e$-factors (see \autocite[\S11, Prop.]{rohrlich}) that the root number $w(\rho',\psi_K)$ is independent of the choice of $\diff x_K$, and that it is also independent of $\psi_K$ if $\det\rho$ is real positive, which is always the case for a WD-representation obtained from an abelian variety.

\subsection{Some custom conventions}\label{subs:conventions_root_n} The dependence of $\e$-factors on $\psi_K$ and $\diff x_K$ will not be important in what follows, so we make some particular choices for the rest of the paper. We denote by $\mu_{p^{\infty}}$ the group of all complex roots of unity of orders that are powers of $p$, and we fix an isomorphism $\Q_p/\Z_p \simeq \mu_{p^{\infty}}$. For a finite extension $K/\Q_p$ we define $\psi_K$ as the composition \[\psi_K\colon K\xrightarrow{\smash{\Tr}}\Q_p\to \Q_p/\Z_p\iso \mu_{p^\infty}\subseteq \C^\times.\] We note that $n(\psi_K)$ is the valuation of any generator of the different ideal $\mathcal{D}_{K/\Q_p}\subseteq \O_K$. We normalize $\diff x_K$ by demanding that $\diff x(\O_K)=1$ .

\begin{prop}\label{prop:root_n_props} Let $\rho'=(\rho,N)\in\Rep_\C(W_K')$, then \begin{enumerate}
\item For any unramified character $\chi$, we have \[\e(\rho'\otimes\chi,\psi_K,\diff x_K)=\e(\rho',\psi_K,\diff x_K)\cdot \chi(\varphi_K)^{n(\psi_K)\cdot \dim \rho' +a(\rho')};\] In particular, for any $s\in\R$, we have $w(\rho'(s),\psi_K)=w(\rho',\psi_K)$; 
\item We have $w(\rho',\psi_K)w((\rho')^*,\psi_K)=\det(\rho(\theta_K(-1)));$
\item If $\rho$ has a finite image and is self-dual, then \[w(\rho\otimes\spe(2),\psi_K)=\det(\rho(\theta_K(-1)))\cdot (-1)^{\<\rho|\1\>},\] where $\<\cdot |\cdot\>$ denotes the usual inner product on characters of finite groups. 
\end{enumerate}
\end{prop}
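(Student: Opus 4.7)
My plan is to handle (1)--(3) in order: (1) is classical with a small extra step for the monodromy, (2) is the functional equation on the Weil part together with a duality analysis of the $\delta$-factors, and (3) then follows by direct computation.

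For part (1), the twist formula for a Weil representation $\rho$ (trivial monodromy) is a standard consequence of the axioms in \ref{subs:eps_factor}: verify from axiom (iii) for $\dim \rho = 1$, then extend by Brauer induction using (i) and (ii). Passing to a WD-representation $\rho' = (\rho, N)$, since $\chi$ is unramified we have $\rho' \otimes \chi = (\rho \otimes \chi, N)$ with the same $V^{I_K}$ and the same $\ker N$, and a direct calculation gives $\delta(\rho' \otimes \chi) = \chi(\varphi_K)^{d} \delta(\rho')$ with $d = \dim V^{I_K}/(\ker N)^{I_K}$. Combining with the Weil formula and the identity $a(\rho') = a(\rho) + d$ between the Artin conductors of a WD-representation and its underlying Weil part yields the formula. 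The ``in particular'' is immediate: $\omega_K^s(\varphi_K) = q_K^{-s} \in \R_{>0}$ for $s\in\R$, so the twist by $\omega_K^s$ multiplies $\epsilon$ by a positive real and leaves the root number untouched.

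For part (2), the identity for a Weil representation $\rho$ is the classical functional equation, cf.\ \autocite[\S 3]{tate_background}. The plan is to reduce the WD-version by showing that $\delta(\rho')\delta((\rho')^*) \in \R_{>0}$, so its contribution to $w(\rho')w((\rho')^*)$ is trivial. The key inputs are: $N$ restricts to $V^{I_K}$ (because $\omega_K|_{I_K}=1$) and induces an isomorphism $V^{I_K}/(\ker N)^{I_K} \iso (\Img N)^{I_K}$ that is $\varphi_K$-equivariant only up to a twist by $\omega_K$; and the averaging duality $(V^*)^{I_K} \cong (V^{I_K})^*$ together with the identification $(\ker N^{\tra})^{I_K} \cong (V^{I_K}/(\Img N)^{I_K})^*$. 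A direct computation then produces $\delta(\rho')\delta((\rho')^*) = q_K^{\dim Q} \in \R_{>0}$, where $Q = V^{I_K}/(\ker N)^{I_K}$. I expect the bookkeeping here, and in particular the hidden $\omega_K$-twist inside the naive isomorphism via $N$, to be the main obstacle.

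For part (3), decompose $\rho \otimes \spe(2)$ as the WD-representation with Weil part $\rho \oplus \rho(1)$ and monodromy $\id_V\colon\rho\to\rho(1)$. Additivity in the Weil direction and part (1) give
\[\epsilon(\rho \otimes \spe(2), \psi_K) = \epsilon(\rho,\psi_K)^2 \cdot q_K^{-(n(\psi_K)\dim\rho+a(\rho))} \cdot \delta(\rho\otimes\spe(2)),\]
while part (2) and self-duality of $\rho$ yield $w(\rho,\psi_K)^2 = \det\rho(\theta_K(-1))$. A direct inspection of the quotient space shows $\delta(\rho \otimes \spe(2)) = \det(-\rho(\varphi_K)|V^{I_K})$, of modulus one since $\rho$ has finite image. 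Combining,
\[w(\rho \otimes \spe(2), \psi_K) = \det\rho(\theta_K(-1)) \cdot (-1)^{\dim V^{I_K}}\det\rho(\varphi_K)|_{V^{I_K}}.\]
Finally, self-duality forces the eigenvalues of $\rho(\varphi_K)$ on the self-dual $\widehat{\<\varphi_K\>}$-module $V^{I_K}$ to pair as $\{\lambda,\lambda^{-1}\}$; a pair with $\lambda\neq\pm 1$ contributes $1$ to the determinant and an even amount to the dimension, while only $\pm 1$ eigenvalues carry the sign. A short multiplicity count gives $(-1)^{\dim V^{I_K}}\det \rho(\varphi_K)|_{V^{I_K}} = (-1)^{\dim V^{W_K}} = (-1)^{\<\rho|\1\>}$, completing the proof.
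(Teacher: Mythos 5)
Your proposal is correct, but it is worth noting that the paper does not actually prove this proposition: it simply cites Rohrlich (\S11 Prop.\ (iii) and \S12 Lemma (iii) of his survey for (1) and (2), and Prop.~6 of \autocite{rohrlich_formulas} for (3)). What you have written is therefore a self-contained derivation of the cited facts, and the two genuinely nontrivial steps both check out. In (2), the cancellation $\delta(\rho')\delta((\rho')^*)=q_K^{\dim Q}$ is right: writing $W=V^{I_K}$, the map $N$ identifies $Q=W/\ker(N|_W)$ with $NW$ while conjugating $\rho(\varphi_K)$ into $q_K\rho(\varphi_K)|_{NW}$, and $(V^*)^{I_K}/(\ker N^{\tra})^{I_K}\cong (NW)^*$ carries the contragredient action, so $\delta(\rho')=(-q_K)^{\dim Q}\det(\rho(\varphi_K)|_{NW})$ and $\delta((\rho')^*)=(-1)^{\dim Q}\det(\rho(\varphi_K)|_{NW})^{-1}$, whose product is indeed $q_K^{\dim Q}>0$. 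In (3), your eigenvalue count needs (and has) two inputs you should make explicit: $V^{I_K}$ is self-dual as a $\<\varphi_K\>$-module because $V\cong V^*$ and invariants commute with duality by averaging, and $\rho(\varphi_K)|_{V^{I_K}}$ is semisimple because $\rho$ has finite image, so the multiplicity of the eigenvalue $1$ really equals $\dim V^{W_K}=\<\rho|\1\>$. The only gain of the paper's approach is brevity; yours makes transparent exactly which conventions on $a(\rho')$ and $\delta(\rho')$ are being used, which matters since the formula in (1) is stated with the Weil--Deligne conductor $a(\rho')=a(\rho)+\dim V^{I_K}/(\ker N)^{I_K}$.
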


\begin{proof} (1), (2), and (3) can be deduced from \autocite[\S11, Prop.(iii)]{rohrlich}, \autocite[\S12, Lemma.(iii)]{rohrlich}, and \autocite[p.~327, Prop.~6]{rohrlich_formulas}, respectively. 
\end{proof}

\section{Abelian varieties with an action by a number field}\label{sect:ab_var_action}

Let $A/K$ be an abelian variety of dimension $g$ over some field $K$. We suppose that there exists an inclusion of unitary rings $F\hookrightarrow \End^0_K(A)$ where $F$ is a number field of some degree $n$. We fix a prime number $\ell\neq \cha(K)$. Let us recall that $\rho_\ell$ is the $\Q_\ell$-linear representation of $\Ga_K$ given by the Galois action on $(V_\ell A)^*$.

\subsection{The $F$-action on $(V_\ell A)^*$}\label{subs:F_action} Since the $\Q_\ell$-linear Galois representation $\rho_\ell$ commutes with the action of the semisimple $\Q_\ell$-algebra $F_\ell:=F\otimes_{\Q}\Q_\ell$ on $(V_\ell A)^*$, we may interpret $\rho_\ell$ as an $F_\ell$-linear representation. It is well known (see \autocite[p.~39, Prop.~2]{shimura_taniyama}) that $n|2g$, and we denote $h:=2g/n$. We have a decomposition
\begin{equation}\label{eq:F_ell_decomp}F_\ell=\prod_\l F_\l\end{equation}
where $\l$ runs through all finite places of $F$ above $\ell$, and where $F_\l$ is the corresponding completion of $F$. Consequently, we may decompose 
\begin{equation}\label{eq:V_l-adic_decomp}(V_\ell A)^*=\prod_\l V_\l\end{equation}
where $V_\l=(V_\ell A)^*\otimes_{F_\ell}F_\l$ is an $F_\l$-vector space of some dimension $h_\l$. By $F_\ell$-linearity of $\rho_\ell$, the decomposition \eqref{eq:V_l-adic_decomp} is $\Ga_K$-equivariant, so we obtain a family of Galois representations $\rho_\l\colon \Ga_K\to \Aut_{F_\l}(V_\l).$

\begin{thm}[{\autocite[Thm.~2.1.1]{ribet_real}}]\label{thm:F_mod_free}
In the notation above, $h_\l=h$ for all $\l$, so $V_\ell (A)^* $ is a free $F_\ell$-module of rank $h$.
\end{thm}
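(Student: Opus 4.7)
The plan is to exploit the characteristic polynomial of a primitive element of $F$ acting on $V_\ell A$, comparing its factorization over $\Q$ with its local factorization coming from $F_\ell = \prod_\lambda F_\lambda$.

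First I would choose $\alpha \in F$ such that $F = \Q(\alpha)$ and, after multiplying by a suitable positive integer, arrange that $\alpha \in \End_K(A)$. Let $m_\alpha(T) \in \Q[T]$ be its minimal polynomial (irreducible of degree $n$), and write $P_\alpha(T) = \det_{\Q_\ell}(T - \alpha \mid V_\ell A)$ for the characteristic polynomial of the induced $\Q_\ell$-linear endomorphism. A classical result on endomorphisms of abelian varieties guarantees that $P_\alpha$ lies in $\Q[T]$ and is independent of $\ell$. Because $\alpha$ is primitive, its minimal polynomial as an endomorphism of $V_\ell A$ coincides with $m_\alpha$, so every eigenvalue is a $\Q$-conjugate of $\alpha$ and every conjugate actually appears. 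The $\Q$-rationality of $P_\alpha$ forces the root-multiplicities to be invariant under $\Gal(\overline{\Q}/\Q)$; since this group acts transitively on the roots of the irreducible polynomial $m_\alpha$, all multiplicities are equal to some integer $c$. Counting degrees yields $c = 2g/n = h$, whence $P_\alpha(T) = m_\alpha(T)^h$.

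Next I would read off the factorization locally. The decomposition $(V_\ell A)^* = \prod_\lambda V_\lambda$ from \eqref{eq:V_l-adic_decomp} presents each $V_\lambda$ as a free $F_\lambda$-vector space of rank $h_\lambda$, since $F_\lambda$ is a field. Multiplication by the image of $\alpha$ in $F_\lambda$ is then an $F_\lambda$-scalar on $V_\lambda$, so its $\Q_\ell$-linear characteristic polynomial equals $m_{\alpha,\lambda}(T)^{h_\lambda}$, where $m_{\alpha,\lambda}(T) \in \Q_\ell[T]$ denotes the minimal polynomial of that image over $\Q_\ell$. The factorization $m_\alpha(T) = \prod_\lambda m_{\alpha,\lambda}(T)$ into pairwise coprime irreducible factors (separability is automatic in characteristic zero) is precisely the one mirroring $F_\ell = \prod_\lambda F_\lambda$. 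Since the characteristic polynomial of $\alpha$ on $(V_\ell A)^*$ equals that on $V_\ell A$ by duality, combining the two expressions gives
\[ \prod_\lambda m_{\alpha,\lambda}(T)^{h} \;=\; P_\alpha(T) \;=\; \prod_\lambda m_{\alpha,\lambda}(T)^{h_\lambda}, \]
and unique factorization in $\Q_\ell[T]$ forces $h_\lambda = h$ for every $\lambda$. Freeness of $(V_\ell A)^*$ as an $F_\ell$-module of rank $h$ then follows immediately from $(V_\ell A)^* = \prod_\lambda V_\lambda \cong \prod_\lambda F_\lambda^{\,h} = F_\ell^{\,h}$.

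The main obstacle is conceptual rather than computational: one must realize that the single $\Q$-rationality statement for $P_\alpha$, together with transitivity of $\Gal(\overline{\Q}/\Q)$ on the roots of an irreducible polynomial, is what equidistributes the local multiplicities across all the primes $\lambda$ simultaneously. Of the ingredients, only the $\Q$-rationality and $\ell$-independence of $P_\alpha$ relies on the arithmetic of abelian varieties; the rest is a comparison of local and global factorizations of polynomials.
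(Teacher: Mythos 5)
Your argument is correct. Note that the paper does not prove this statement at all: it is imported verbatim from Ribet (Thm.~2.1.1 of the cited reference), so there is no internal proof to compare against. Your route --- reducing to the $\Q$-rationality of the characteristic polynomial $P_\alpha$ of an integral primitive element $\alpha$ of $F$ (Weil's theorem, the same input the paper itself invokes later in the proof of Prop.~\ref{prop:F_rationality}), then matching the global factorization $P_\alpha=m_\alpha^{\,h}$ against the local one $\prod_\lambda m_{\alpha,\lambda}^{\,h_\lambda}$ --- is a standard, complete proof, and it has the merit of working over an arbitrary base field, whereas the quickest argument in characteristic zero (freeness of $H_1(A(\C),\Q)$ over the field $F$, transported to $V_\ell A$ by the comparison isomorphism) does not. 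Two small remarks, neither of which is a gap: (i) your clause ``every conjugate actually appears'' tacitly uses the injectivity of $\End^0_K(A)\otimes_\Q\Q_\ell\to\End_{\Q_\ell}(V_\ell A)$, but you do not actually need it, since Galois-invariance of the multiplicities already forces them all equal and they cannot all vanish because $\deg P_\alpha=2g>0$; (ii) the identity $F_\lambda=\Q_\ell(\alpha_\lambda)$, which you use when asserting that the $\Q_\ell$-characteristic polynomial of $\alpha_\lambda$ on $F_\lambda$ is exactly $m_{\alpha,\lambda}$ and not a proper power of it, holds precisely because $\alpha$ was chosen primitive --- worth stating explicitly, since this is where primitivity enters the local half of the computation.
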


\subsection{The $F_\ell$-bilinear Weil pairing}\label{subs:F_weil_pairing} Let $P_{\Q_\ell}\colon V_\ell A\times V_\ell (A^\vee)\to\Q_\ell(1)$ be the $\Ga_K$-equivariant perfect $\Q_\ell$-bilinear Weil pairing. Each $f\!\in\!\End_K^0(A)$ acts on $V_\ell(A^\vee)$ via its dual $f^\vee\!\in\!\End_K^0(A^\vee)$ and satisfies $P_{\Q_\ell}(f(x),y)\!=\!P_{\Q_\ell}(x,f^\vee(y))$ for all $(x,y)\!\in\!V_\ell A\!\times\! V_\ell(A^\vee)$. Then, the pairing induces an isomorphism \[P'_{\Q_\ell}\colon \Hom_{\Q_\ell}(V_\ell A, \Q_\ell)\cong V_\ell(A^\vee)(-1)\] in $\Rep_{\Q_\ell}(\Ga_K)$, which is $\End_K^0(A)\otimes_{\Q}\Q_\ell$-linear and, in particular, $F_\ell$-linear.

We claim that the map \[t\colon \Hom_{F_\ell}(V_\ell A,F_\ell)\to \Hom_{\Q_\ell}(V_\ell A,\Q_\ell)\] given by $f\mapsto \Tr_{F_\ell/\Q_\ell}\circ f$ is an isomorphism in $\Rep_{F_\ell}(\Ga_K)$. Indeed, it is straighforward to verify that $t$ defines a morphism in $\Rep_{F_\ell}(\Ga_K)$ and that the $\Q_\ell$-dimensions of the source and the target agree. We are left to verify the injectivity. Let $f\in \Hom_{F_\ell}(V_\ell A,F_\ell)$ be non-zero, and let $x\in\Img(f)\setminus\{0\}$. By $F_\ell$-linearity, we may suppose that $x$ is the image of $1\in F_\l$ via $F_\l\hookrightarrow F_\ell$ for some $\l$. Then $\Tr_{F_\ell/\Q_\ell}(x)=[F_\l:\Q_\ell]\neq 0$, so $\Tr_{F_\ell/\Q_\ell}\circ f\neq0$. 

Composing $P'_{\Q_\ell}$ with $t$ gives an isomorphism $P'_{F_\ell}\colon \Hom_{F_\ell}(V_\ell A,F_\ell)\!\to\!V_\ell(A^\vee)(-1)$ in $\Rep_{F_\ell}(\Ga_K)$, which translates back to a $\Ga_K$-equivariant perfect $F_\ell$-bilinear pairing \[P_{F_\ell}\colon V_\ell A\times V_\ell(A^\vee)\to F_\ell(1).\]

Given a polarization $\l\colon A\to A^\vee$ defined over $K$, let $\cdot^\dag\colon\End^0_K(A)\to \End^0_K(A)$ be the corresponding Rosati involution. By composing the second argument of $P_{F_\ell}$ with $\l$, we obtain a map \[P^\l_{F_\ell}\colon V_\ell A\times V_\ell A\to F_\ell(1).\] 

\begin{prop}\label{prop:abelianV_real_det}
Let $A/K$ be an abelian variety with a polarization $\l$ such that $\End_K^0(A)$ contains a totally real field $F$ fixed by the Rosati involution $\cdot^\dag$. The pairing $P^\l_{F_\ell}$ is alternating, $F_\ell$-bilinear, $\Ga_K$-equivariant, and perfect. Consequently, $h:=\frac{2\dim A}{[F:\Q]}$ is an even integer, $\rho_\ell^*\cong\rho_\ell(1)$ in $\Rep_{F_\ell}(\Ga_K)$, and $\det_{F_\ell}\!\rho_\ell=\omega_K^{-h/2}$.
\end{prop}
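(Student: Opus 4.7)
The plan is to prove the four claims in the stated order, treating the alternating property of $P^\lambda_{F_\ell}$ as the main technical step and deriving the rest formally.

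First, I would verify that $P^\lambda_{F_\ell}$ inherits the $F_\ell$-bilinearity, $\Ga_K$-equivariance, and perfectness of $P_{F_\ell}$. Only the $F_\ell$-linearity in the second argument needs attention: for $f\in F$, the identity $\lambda\circ f=f^\vee\circ\lambda$, which is a direct restatement of the fact that $F$ is fixed by the Rosati involution, yields $P^\lambda_{F_\ell}(x,fy)=P_{F_\ell}(x,f^\vee\lambda y)=f\cdot P^\lambda_{F_\ell}(x,y)$.

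The alternating property is the heart of the argument. Fix $x\in V_\ell A$ and set $a:=P^\lambda_{F_\ell}(x,x)\in F_\ell(1)$. The classical $\Q_\ell$-Weil pairing $P^\lambda_{\Q_\ell}$ attached to a polarization is alternating, hence skew-symmetric. Bilinearity of $P^\lambda_{F_\ell}$ in both arguments gives $P^\lambda_{F_\ell}(x,fx)=fa=P^\lambda_{F_\ell}(fx,x)$ for every $f\in F$; applying $\Tr_{F_\ell/\Q_\ell}$, together with the compatibility $\Tr_{F_\ell/\Q_\ell}\circ P^\lambda_{F_\ell}=P^\lambda_{\Q_\ell}$ and the skew-symmetry of $P^\lambda_{\Q_\ell}$, then forces $2\Tr_{F_\ell/\Q_\ell}(fa)=0$, whence $\Tr_{F_\ell/\Q_\ell}(fa)=0$ for all $f\in F$, and so for all $f\in F_\ell$ by $\Q_\ell$-linearity. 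Since $F/\Q$ is separable, $F_\ell/\Q_\ell$ is étale and its trace form is non-degenerate, forcing $a=0$.

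Once $P^\lambda_{F_\ell}$ is a perfect alternating $F_\ell$-bilinear form on the free $F_\ell$-module $V_\ell A$ of rank $h$ (Thm.~\ref{thm:F_mod_free}), decomposing it via \eqref{eq:F_ell_decomp} gives on each factor $V_\l$ a perfect alternating $F_\l$-bilinear form valued in $F_\l(1)$, which forces $h=\dim_{F_\l}V_\l$ to be even. The same perfect pairing provides an $F_\ell$-linear $\Ga_K$-equivariant isomorphism $V_\ell A\cong\Hom_{F_\ell}(V_\ell A,F_\ell(1))$; composing with the Tate-twisted inverse of the isomorphism $t$ of \ref{subs:F_weil_pairing} identifies the right-hand side with $(V_\ell A)^*(1)$, giving $\rho_\ell^*\cong\rho_\ell(1)$ in $\Rep_{F_\ell}(\Ga_K)$.

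Finally, taking $F_\ell$-determinants in $\rho_\ell^*\cong\rho_\ell(1)$ only yields $(\det_{F_\ell}\rho_\ell)^2=\omega_K^{-h}$, so to pin down the correct square root I would invoke a Pfaffian-type argument. Writing $h=2m$ and regarding $P^\lambda_{F_\ell}$ as an element of $\wedge^2_{F_\ell}\Hom_{F_\ell}(V_\ell A,F_\ell(1))$, its $m$-fold wedge power is non-vanishing by non-degeneracy (using that $m!$ is invertible in the characteristic-zero algebra $F_\ell$) and defines a canonical $\Ga_K$-equivariant isomorphism $\wedge^h_{F_\ell}V_\ell A\iso F_\ell(m)=F_\ell(h/2)$. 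Hence the $F_\ell$-determinant of the Galois action on $V_\ell A$ equals $\omega_K^{h/2}$, and $\det_{F_\ell}\rho_\ell=\omega_K^{-h/2}$ follows by passing to duals. The main obstacle is the alternating step, where the real-multiplication hypothesis (Rosati-invariance of $F$) enters crucially through the trace-form argument; once it is in place, everything else falls out by routine linear algebra.
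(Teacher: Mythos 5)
Your proof is correct and follows essentially the same route as the paper: inherit the properties of the pairing from $P_{\Q_\ell}$ via the trace isomorphism $t$ and the Rosati-invariance of $F$, deduce evenness of $h$ and $\rho_\ell^*\cong\rho_\ell(1)$ from the perfect alternating form, and pin down the determinant via the non-vanishing $h/2$-fold wedge power of the form. Your trace-form argument for the alternating property is in fact a welcome expansion of a step the paper only sketches ("using the isomorphism $t$\dots"), and the Pfaffian computation matches the paper's explicit symplectic-basis verification.
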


\begin{proof}
By construction, the map $P^\l_{F_\ell}$ is $\Ga_K$-equivariant, $\Q_\ell$-bilinear, non-degenerate, and $F_\ell$-linear in the first variable. From the standard theory, for any $f\in\End_K(A)\otimes_\Z\Q_\ell$ and $a,b\in V_\ell A$ we have $P_{\Q_\ell}(a,\l(a))=0$ and $P_{\Q_\ell}(f(a),\l(b))=P_{\Q_\ell}(a,\l(f^\dag(b)))$. Using the isomorphism $t$ we see that the same statements are true when we replace $P_{\Q_\ell}$ by $P_{F_\ell}$. Since $\cdot^\dag$ acts trivially on $F_\ell$, the pairing $P_{F_\ell}^\l$ is $F_\ell$-bilinear and perfect. 

From classical bilinear algebra, the existence of an alternating and perfect $F_\ell$-bilinear pairing on $V_\ell A$ implies that $h$ is even. The $\Ga_K$-equivariance and non-degeneracy of $P_{F_\ell}^\l$ gives $(V_\ell A)^*\cong (V_\ell A)(-1)$ in $\Rep_{F_\ell}(\Ga_K)$. Taking the dual objects we obtain $\rho_\ell^*\cong \rho_\ell(1)$.  It remains to compute the determinant. 

For any $n\geq1$, the $F_\ell$-module $(\wedge^n(V_\ell A))^*$ of $F_\ell$-multilinear alternating $n$-forms on $V_\ell A$ is free of rank $\binom{h}{n}$. It is equipped with a natural $F_\ell$-linear Galois action given by $\s P=P\circ \wedge^n\rho^*_\ell(\s^{-1})$ for every $P\in(\wedge^n(V_\ell A))^*$ and every $\s\in\Ga_K$ (note that we need to put $\s^{-1}$ in order to have $(\tau\s)P=\tau(\s P)$). Then, the pairing $P_{F_\ell}^\l\in (\wedge^2(V_\ell A))^*$ satisfies $\s P_{F_\ell}^\l=\omega_K(\s^{-1})P_{F_\ell}^\l$. Using the notion of alternating product of multilinear forms, see \autocite[A~III.142, Exemple~3)]{bourbaki_alg1}, we consider the $h/2$-fold product $\phi=\left(P_{F_\ell}^\l\right)^{\wedge h/2}\in(\wedge^h(V_\ell A))^*$, on which $\s$ acts as multiplication by $\omega_K(\s)^{-h/2}$. Since $\Ga_K$ acts on the $F_\ell$-module $(\wedge^h(V_\ell A))^*=(\det_{F_\ell}V_\ell A)^*$ as $(\det_{F_\ell}\rho_\ell^*)^*\cong \det_{F_\ell}\rho_\ell$, we are left to check that $\phi\neq 0$. There is a suitable $F_\ell$-basis $e_1,\ldots,e_h$ of $V_{\ell}A$ such that the matrix of $P^\l_{F_\ell}$ in this basis is diagonal by blocks $\left( \begin{smallmatrix} 0&1\\ -1&0 \end{smallmatrix} \right)$. If we denote by $e_1',\ldots, e_h'$ the corresponding dual basis of $(V_\ell A)^*$, then $P_{F_\ell}^\l=\sum^{h/2}_{i=1}e_{2i-1}'\wedge e_{2i}'$. It is straightforward to verify that $\phi=(h/2)!\, e_1'\wedge\ldots\wedge e_{h}'$, and the latter is clearly non-zero. 
\end{proof}

\subsection{A geometric trichotomy}\label{subs:not_pot_good} 
Let $K/\Q_p$ be a finite extension, $\ell\neq p$ a prime, and $A/K$ an abelian variety. We consider the identity component $\Ac_{k_K}^0$ of the special fiber of the Néron model of $A/K$. By Barsotti--Chevalley theorem (see, e.g., \autocite[Thm.~8.27 and Prop.~16.15]{milneAG_book}), there exist a unipotent group $U/k_K$, a torus $T'/k_K$, and an abelian variety $B'/k_K$ that fit an exact sequence 
\begin{equation}\label{eq:barsotti_chevalley_seq}0\to T'\times U\to \Ac_{k_K}^0\to B'\to 0.\end{equation}
The functoriality of the Néron model and the facts that $\Hom_{k_K}(T',B')=0$, $\Hom_{k_K}(T',U)=0$, and $\Hom_{k_K}(U,B')=0$ imply that we have a morphism of unitary $\Q$-algebras \begin{equation}\label{eq:end_trichotomy}\End^0_K(A)\to \End^0_{k_K}(T')\times\End^0_{k_K}(B').\end{equation} The following proposition is a slightly sharper version of \autocite[Prop. 3.6.1]{ribet_real}. 

\begin{prop}\label{prop:geom_trichotomy}
We suppose that $A/K$ has real multiplication by $F$. Then, exactly one of the algebraic groups $T'$, $U$, and $B'$ is nontrivial. 
\end{prop}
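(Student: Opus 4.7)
My plan is to show that at most one of $T'$, $U$, $B'$ can be nonzero; combined with $\dim T'+\dim U+\dim B' = g \geq 1$, this yields the trichotomy. The key observation I would use is that \eqref{eq:end_trichotomy} composed with $F\hookrightarrow\End^0_K(A)$ is a unital $\Q$-algebra homomorphism from the field $F$ to $\End^0_{k_K}(T')\times\End^0_{k_K}(B')$. Projecting onto the first factor sends $1_F$ to the class of $\id_{T'}$, which vanishes iff $T'=0$ (since $\End_{k_K}(T')$ is $\Z$-torsion-free when $T'\neq 0$), and symmetrically for $B'$. So for each of $T'$ and $B'$, either the object vanishes, or the projection is injective (its kernel being a proper ideal of a field).

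In the case $T'\neq 0$, I would exploit the character-group functor, which embeds $\End^0_{k_K}(T')^{\mathrm{op}}$ into $\End_\Q(X(T')_\Q)$; since $F$ is commutative, this gives a faithful $F$-module structure on the $\Q$-vector space $X(T')_\Q$ of dimension $\dim T'$. This forces $g=[F:\Q]\leq\dim T'\leq g$, hence $\dim T'=g$, and consequently $U=B'=0$ from the Barsotti--Chevalley sequence.

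In the case $B'\neq 0$, I would descend the polarization: $\l\colon A\to A^\vee$ extends by Néron functoriality to $\tilde\l\colon\Ac\to\Ac^\vee$, and Grothendieck's duality for Néron models identifies the abelian-variety quotient of $(\Ac^\vee)_{k_K}^0$ with $(B')^\vee$, so $\tilde\l$ descends to an isogeny $\l'\colon B'\to(B')^\vee$ which is a polarization. By functoriality, the Rosati involution $\dag'$ on $\End^0_{k_K}(B')$ is compatible with $\dag$ on $\End^0_K(A)$ under \eqref{eq:end_trichotomy}, so the image of $F$ in $\End^0_{k_K}(B')$ is a totally real subfield fixed by $\dag'$. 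Applying Prop.~\ref{prop:abelianV_real_det} to $(B'/k_K,\l',F)$ then yields that $2\dim B'/[F:\Q]$ is even, i.e., $g\mid\dim B'$, so $\dim B'=g$ and $T'=U=0$.

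Finally, if both $T'=0$ and $B'=0$, the Barsotti--Chevalley sequence forces $\Ac_{k_K}^0=U$, of dimension $g\geq 1$, so $U\neq 0$. The hardest step will be the descent of the polarization in the $B'$-case: to justify that $\l$ induces a genuine polarization $\l'$ on $B'$ together with a Rosati on $\End^0_{k_K}(B')$ compatible with $\dag$, I would invoke Grothendieck's duality of Néron models identifying the abelian quotient of $(\Ac^\vee)_{k_K}^0$ with the dual of the abelian quotient of $\Ac_{k_K}^0$; the Rosati compatibility then follows from the naturality of the construction $f\mapsto\l^{-1}f^\vee\l$.
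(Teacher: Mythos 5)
Your overall structure (at most one of $T'$, $U$, $B'$ survives, plus $\dim T'+\dim U+\dim B'=g\geq1$) and your treatment of the torus case coincide with the paper's: both reduce to the fact that a unital map from the field $F$ into $\End^0_{k_K}(T')$ is injective, so $X(T')_\Q$ is a nonzero $F$-vector space and $\dim T'\geq[F:\Q]=g$. Where you genuinely diverge is the abelian case. The paper does not touch the polarization there at all: it only uses that $F$ is totally real, invokes Poincar\'e reducibility and Albert's classification of endomorphism algebras of simple abelian varieties (via the cited Lemma~6 of \autocite{chai_rm_end}) to get $[F:\Q]\leq\dim B'$ directly. You instead descend $\l$ to a symmetric isogeny $\l'\colon B'\to(B')^\vee$ with Rosati fixing $F$, and apply Prop.~\ref{prop:abelianV_real_det} over $k_K$ to get $g\mid\dim B'$, hence $\dim B'=g$. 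This is internally consistent --- Section~\ref{sect:ab_var_action} is set up over an arbitrary field, Ribet's theorem applies over $k_K$, and the proof of Prop.~\ref{prop:abelianV_real_det} uses only that $\l$ is a symmetric isogeny (not positivity), so the descended $\l'$ need not be checked to be ample. Your route buys a divisibility statement ($g\mid\dim B'$) rather than just an inequality, and avoids Albert's classification; the paper's route avoids any duality theory for N\'eron models.

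The caveat is that the step you yourself flag as hardest is a genuine piece of machinery, not a formality. You need (i) that the abelian quotient of $(\Ac^\vee)^0_{k_K}$ is canonically $(B')^\vee$ \emph{without} assuming semistable reduction (this is Grothendieck's orthogonality/duality theorem for N\'eron models, SGA~7~I, Exp.~IX --- nothing the paper develops), (ii) that the extension of the isogeny $\l$ induces an \emph{isogeny} on abelian quotients (which one checks on $(V_\ell A)^{I_K}$ and the toric filtration), and (iii) naturality of (i) in order to transport the Rosati involution. None of these would fail, but as written your argument outsources its crux to an uncited theorem; to be complete you would have to either import Exp.~IX explicitly or realize the duality $V_\ell B'\times V_\ell B''\to\Q_\ell(1)$ geometrically via Tate's isogeny theorem over the finite field $k_K$. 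If you prefer to stay within the paper's toolkit, the shorter path is the one the paper takes: $F$ totally real inside $\End^0_{k_K}(B')$ already forces $[F:\Q]\leq\dim B'$ by Albert's classification, with no polarization needed.
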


\begin{proof}
Let us suppose that $T'$ is nontrivial.  From \eqref{eq:end_trichotomy} we get an inclusion $F\hookrightarrow \End_{k_K}^0(T')$. It follows that $X(T')\otimes_{\Z}\Q$ is a nontrivial $F$-vector space. Then, $\dim T'\geq [F:\Q]=\dim A=\dim\Ac_{k_K}^0$, so $U$ and $B'$ are trivial. 

It remains to prove that if $B'$ is nontrivial, then $\dim B'=\dim A$. We see from \eqref{eq:end_trichotomy} that $\End_{k_K}^0(B')$ contains $F$ as a subfield. Using Poincaré's complete reducibility theorem we may assume that $B'$ is some $n$-th power of a simple abelian variety $S'$. Then, using Albert's classification of endomorphism algebras of simple abelian varieties one can show that $[F:\Q]\leq n\dim S'=\dim B'$, this is done in \autocite[Lemma~6]{chai_rm_end}.
\end{proof}


\section{Rationality of representations on Tate modules}\label{sect:real_AV_potG_red}

\subsection{The setup}\label{subs:setup_pot_good} We fix a prime number $p$, a finite extension $K/\Q_p$, and an abelian variety $A/K$ having potentially good reduction. Let us suppose that $A/K$ has real multiplication (RM), i.e., that the $\Q$-algebra $\End_K^0(A)$ contains a totally real number field $F$ of degree $g=\dim A$ fixed by a Rosati involution on $\End_K^0(A)$. We recall the decomposition $F_\ell=F\otimes_\Q\Q_\ell=\prod_{\l}F_\l$. In addition, we have \begin{equation}\label{eq:F_ell_bar_decomposition} F_\C:=F\otimes_\Q\C=\prod_{\iota\colon F\hookrightarrow\C}\C.\end{equation}

Let $\rho_\ell$ be the Galois representation as in \ref{subs:l_adic_rep_av} for some $\ell\neq p$. It is also $F_\ell$-linear. As in \ref{subs:complex_WD}, let us fix an embedding $i:\Q_\ell\hookrightarrow \C$, then the associated complex Weil--Deligne representation is given by the pair $(W_i(\rho_\ell),0)$ where $W_i(\rho_\ell)$ denotes the $F_\C$-linear Weil representation obtained by restricting $\rho_\ell\otimes_{i,\Q_\ell}\C$ to $W_K$.

\begin{prop}\label{prop:F_mod_semis}
The representation $\rho_\ell$ is semisimple in $\Rep_{F_\ell}(W_K)$.
\end{prop}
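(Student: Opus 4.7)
The plan is to use the decomposition $F_\ell=\prod_\l F_\l$ from \eqref{eq:F_ell_decomp} to reduce to semisimplicity of each component $\rho_\l$ as an $F_\l[W_K]$-module. Since the primitive idempotents of $F_\ell$ commute with the $W_K$-action, an $F_\ell[W_K]$-subrepresentation of $(V_\ell A)^*$ is the product of its components in the $V_\l$, so producing $F_\ell[W_K]$-complements reduces to producing $F_\l[W_K]$-complements for each $\l$ separately. I would fix $\l$ throughout the rest of the argument.

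The heart of the argument uses potentially good reduction. By Lemma~\ref{lem:min_ext_good} one can fix a finite Galois extension $L/K$ such that $A/L$ has good reduction. By the Néron--Ogg--Shafarevich criterion (Thm.~\ref{thm:ab_var_reduction}.(1)), $\rho_\ell|_{\Ga_L}$ is unramified, and under the identification $\Ga_L/I_L\cong\Ga_{k_L}$ the representation on $(V_\ell A)^*$ coincides with the one arising from the reduction $\bar{A}/k_L$. Tate's theorem on abelian varieties over finite fields then ensures that $\rho_\ell(\varphi_L)$ acts semisimply on $(V_\ell A)^*$ as a $\Q_\ell$-linear endomorphism.

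I would then transfer this to each $F_\l$-linear component. The operator $\rho_\l(\varphi_L)\in\End_{F_\l}(V_\l)$ is the restriction of $\rho_\ell(\varphi_L)$ to $V_\l$, and viewed $\Q_\ell$-linearly it remains semisimple. Its $F_\l$-minimal polynomial divides its $\Q_\ell$-minimal polynomial inside $F_\l[x]$, so the former has distinct roots in $\overline{\Q_\ell}=\overline{F_\l}$ as soon as the latter does. Hence $\rho_\l(\varphi_L)$ is $F_\l$-semisimple, and since $W_L$ acts on $V_\l$ through $\widehat{\<\varphi_L\>}$, this is equivalent to $\rho_\l|_{W_L}$ being semisimple as an $F_\l[W_L]$-module.

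Finally I would ascend from $W_L$ to $W_K$. The subgroup $W_L=W_K\cap\Ga_L$ has finite index $[L:K]$ in $W_K$. Given any $F_\l[W_K]$-subrepresentation $U\subset V_\l$, the $W_L$-equivariant projection $V_\l\to U$ produced by the previous step can be averaged over coset representatives of $W_K/W_L$ to yield a $W_K$-equivariant projection, since $[L:K]$ is invertible in $F_\l$. The main technical obstacle is the $\Q_\ell$-to-$F_\l$ transfer of semisimplicity, which is handled cleanly by the minimal-polynomial divisibility; apart from that, one only invokes Tate's theorem and a standard averaging argument.
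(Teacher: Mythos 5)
Your proof is correct and follows essentially the same route as the paper: reduce to a finite-index subgroup $W_L$ over which the reduction is good, invoke the semisimplicity of the Frobenius action on the Tate module of an abelian variety over a finite field, and transfer this to $F_\ell$-linear semisimplicity. The paper packages the last step as the semisimplicity of the commutative algebra $F_\ell\otimes_{\Q}\Q[\phi]$ rather than via minimal polynomials on each $V_\l$, but this is only a cosmetic difference.
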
 

\begin{proof} 
It suffices to prove semisimplicity of the restriction to a subgroup $W_L\subseteq W_K$ of finite index (after applying the argument of \autocite[\S2.7 Lemma]{bh_llc}). Therefore, we may suppose that the reduction is already good over $K$. In this case, the inertia acts trivially and the action of the arithmetic Frobenius $\varphi_K^{-1}$ on $V_\ell A\cong V_\ell\Ac_{k_K}$ is induced by the Frobenius endomorphism $\phi$ of the reduced abelian variety $\Ac_{k_K}/k_K$. It is well known that $\Q[\phi]$ is a semisimple $\Q$-subalgebra of $\End^0_{k_K}(\Ac_{k_K})$. It follows that $F_\ell\otimes_{\Q}\Q[\phi]$ is a semisimple $\Q_\ell$-algebra, and we conclude that $\rho_\ell$ is semisimple. 
\end{proof}

\begin{prop} \label{prop:F_rationality}
For any $\s\in W_K$, let $P_{\ell,\s}\in F_\ell[T]$ be the $F_\ell$-characteristic polynomial\footnote{It is well-defined because of Thm.~\ref{thm:F_mod_free}.} of $\rho_\ell(\s)$. Then, $P_{\ell,\s}$ has coefficients in $F$ and each $F_\l$\mydash{}characteristic polynomial $P_{\l,\s}$ of $\rho_\l(\s)$ is the image of $P_{\ell,\s}$ via the inclusion $F[T]\hookrightarrow F_\l[T]$. 
\end{prop}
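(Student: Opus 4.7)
The plan is to reduce the $F$-rationality of $P_{\ell,\sigma}$ to the classical rationality of traces of endomorphisms of abelian varieties on Tate modules, exploiting the Néron model of $A$ over an inertially minimal extension. By the $F_\ell$-semisimplicity of $\rho_\ell$ (Prop.~\ref{prop:F_mod_semis}) and Newton's identities, the coefficients of $P_{\ell,\sigma}$ are polynomial expressions in the power sums $\Tr_{F_\ell}(\rho_\ell(\sigma^k))$ for $k\geq 1$, so it suffices to show $\Tr_{F_\ell}(\rho_\ell(\sigma)) \in F$ for every $\sigma \in W_K$, where $F$ is identified with its diagonal image in $F_\ell = \prod_\l F_\l$. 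The second assertion of the proposition will then be immediate from the compatible decompositions $(V_\ell A)^* = \prod_\l V_\l^*$ and $F_\ell = \prod_\l F_\l$.

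Using the non-degeneracy of the $\Q$-bilinear trace pairing on $F$, an element $t \in F_\ell$ lies in $F$ if and only if $\Tr_{F_\ell/\Q_\ell}(\alpha t) \in \Q$ for every $\alpha \in F$. Since $F \subset \End_K^0(A)$ commutes with the action of $W_K$ on $(V_\ell A)^*$, the identity
\[\Tr_{\Q_\ell}\bigl(\alpha\rho_\ell(\sigma) \,\big|\, (V_\ell A)^*\bigr) = \Tr_{F_\ell/\Q_\ell}\bigl(\alpha\cdot\Tr_{F_\ell}(\rho_\ell(\sigma))\bigr)\]
further reduces the problem to proving $\Tr_{\Q_\ell}(\alpha\rho_\ell(\sigma)) \in \Q$ for every $\alpha \in F$ and $\sigma \in W_K$.

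For the latter, I invoke Lemma~\ref{lem:min_ext_good}(3) to fix a totally ramified IM\mydash{}extension $L/K$ together with a Frobenius lift $\varphi_K$ fixing $L$; then $A/L$ has good reduction, $k_L = k_K$, and $\varphi_K$ is a Frobenius lift for $L$ as well. Writing $\sigma = \tau\varphi_K^n$ with $\tau \in I_K$ and $n \in \Z$, the image $\bar\tau \in \Gal(L/K)$ acts on the Néron model $\Ac/\O_L$ by universality; because $L/K$ is totally ramified, this action is $k_L$-linear on the special fiber, producing a $k_L$-automorphism $\tilde\tau$ of $\tilde A := \Ac_{k_L}$. Meanwhile $\varphi_K^n$ acts on $(V_\ell A)^* \cong (V_\ell \tilde A)^*$ through (a power of) the Frobenius endomorphism $\phi \in \End_{k_L}(\tilde A)$, and $\alpha \in F$ extends via Néron functoriality to $\tilde\alpha \in \End_{k_L}^0(\tilde A)$, commuting with $\tilde\tau$ (since $\alpha$ is defined over $K$, hence $\Gal(L/K)$-fixed) and with $\phi$ (which is central in $\End_{k_L}^0(\tilde A)$). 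Therefore $\alpha\rho_\ell(\sigma)$ is, up to the canonical duality, induced by a single element of $\End_{k_L}^0(\tilde A)$, and the classical theorem of Weil that $\Q_\ell$-traces of endomorphisms on Tate modules of abelian varieties are rational yields $\Tr_{\Q_\ell}(\alpha\rho_\ell(\sigma)) \in \Q$.

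The main technical subtlety I anticipate is justifying, under the specialization isomorphism $(V_\ell A)^* \cong (V_\ell \tilde A)^*$, that the $W_K$-action on the source is realised on the target by elements of $\End_{k_L}^0(\tilde A)$ as described: this requires combining Néron functoriality for endomorphisms with the $\Gal(L/K)$-equivariance of the reduction and carefully tracking the duality conventions between $V_\ell A$ and $H^1_{\et}$.
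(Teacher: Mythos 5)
Your argument is correct and reaches the conclusion by a genuinely different route from the paper's. Both proofs bottom out in the same geometric input --- the corollary of \autocite[p.~499]{serre_tate} that, up to multiplication by an integer, the action of $\a\s$ on the Tate module (for $\a\in F$, $\s\in W_K$) is induced by an element of $\End^0$ of the abelian variety obtained by good reduction over a suitable extension, combined with Weil's rationality of characteristic polynomials of such endomorphisms --- but they exploit it differently. The paper descends the whole $F_\ell[\s]$-module $(V_\ell A)^*$ to an $F[\s]$-module $W$, by decomposing the semisimple commutative algebra generated by $F$ and $\rho_\ell(\s)$ into number fields and invoking \autocite[p.~38, Lemma~1]{shimura_taniyama}; the $F$-rationality of $P_{\ell,\s}$ and the constancy of $(P_{\l,\s})_\l$ then fall out of $W\otimes_\Q\Q_\ell\simeq (V_\ell A)^*$. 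You descend only the characteristic polynomial: non-degeneracy of the trace form of $F/\Q$ reduces $\Tr_{F_\ell}(\rho_\ell(\s^k))\in F$ to $\Tr_{\Q_\ell}(\a\rho_\ell(\s^k))\in\Q$, and Newton's identities (which hold in characteristic zero with no semisimplicity hypothesis, so Prop.~\ref{prop:F_mod_semis} is not actually needed there) recover the coefficients. Your version is more economical and avoids the descent lemma, at the price of proving only the statement at hand rather than the stronger fact that the module itself is realizable over $F$. One imprecision in your last step: a totally ramified IM-extension $L/K$ from Lemma~\ref{lem:min_ext_good}.(3) need not be Galois, so ``$\bar\tau\in\Gal(L/K)$ acts on $\Ac/\O_L$'' is not literally available; one must either pass to the Galois closure (which is IM but no longer totally ramified, so $k_L\neq k_K$ and the semilinearity/Frobenius bookkeeping changes) or, as the paper does, simply cite the Serre--Tate corollary, which is precisely the resolution of the subtlety you flag at the end.
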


\begin{proof}
We will prove that $V_\ell(A)^*$ is an $F_\ell[\s]$-module that can be realized over $F$, i.e., there exists an $F[\s]$-module $W$ such that $W\otimes_{\Q}\Q_\ell\simeq V_\ell(A)^*$. 

The $F_\ell$-linear representation $\rho_{\ell}$ induces a morphism of $F_\ell$-algebras \[\nu\colon F_\ell[\s]\to\End_{F_\ell}(V_\ell (A)^*).\] As explained by \autocite[p.~499, Corollary]{serre_tate} and its proof, for every $\a\in F[\s]$ there exists an integer $n$ and finite extension $L/K$ over which $A$ has good reduction and such that the action of $n\a$ is induced by an endomorphism of the reduced abelian variety $\Ac_{k_L}$. Then, by a classical argument due to Weil (see \autocite[181]{mumford}), the $\Q_\ell$-characteristic polynomial of $\nu(\a)$ has rational coefficients.

The semisimplicity of $\rho_\ell$ implies that $F[\nu(\s)]$ is a semisimple $\Q$-algebra. Since $F[\nu(\s)]$ is also commutative and finite over $\Q$, we may write $F[\nu(\s)]=\prod_i\Q(\a_i)$ as a finite product of number fields and, accordingly, $(V_\ell A)^*=\prod_i V_i$. Each $V_i$ is the underlying space of a semisimple $\Q_\ell$-linear representation $\nu_i\colon \Q(\a_i)\to\End_{\Q_\ell}(V_i)$. By the above paragraph, for every $\a\in\Q(\a_i)$, the $\Q_\ell$-characteristic polynomial of $\nu_i(\a)$ has rational coefficients. Therefore, \autocite[p.~38, Lemma~1]{shimura_taniyama} applies 
and gives an isomorphism $V_i \simeq \Q(\a_i)^{d_i}\otimes_{\Q}\Q_\ell$ of semisimple $\Q_\ell(\a_i)$-modules for some positive integer $d_i$. 

Let us define the $F[\s]$-module $W:=\prod_i \Q(\a_i)^{d_i}$, so that, by construction, $W\otimes_{\Q}\Q_\ell\simeq V_\ell (A)^*$ as semisimple $F_\ell[\s]$-modules. Let $Q\in F[T]$ be the $F$-characteristic polynomial of $\s$ acting on $W$. Then, $P_{\ell,\s}=Q$. On the other hand, $P_{\ell,\s}$ can be calculated locally at each $\l$ and can be seen as a family of polynomials $(P_{\l,\s})_\l$ in $\prod_\l F_\l[T]$. Since $P_{\ell,\s}$ has coefficients in $F$, the family is constant.
\end{proof}

\begin{prop}\label{prop:rho_iota}
Let $A/K$ be an abelian variety with RM by $F$ and with potentially good reduction. Then we have the following decomposition of the complex Weil representation:
\begin{equation}\label{eq:V_ell(A)_bar_decomposition}
\W_i(\rho_\ell)=\prod_{\iota\colon F\hookrightarrow\C} \rho_\iota,\end{equation} 
where each $\rho_\iota$ is a semisimple Weil representation on a complex $2$\mydash{}dimensional vector space $V_\iota$. Furthermore:
\begin{enumerate}
\item The representations $\rho_\iota$ are $\Aut(\C)$-conjugate; i.e., for every pair of embeddings $\iota,\iota'\colon F\hookrightarrow \C$ there exists an automorphism $u\in\Aut\left(\C\right)$ such that $\rho_{\iota'}\cong\rho_{\iota}\otimes_{u}\C.$
\item Let $M/K$ be as in Def.~\ref{def:im}. The restrictions $\rho_\iota|_{I_K}$ have a common kernel $I_M$. Each $\rho_\iota$ thus induces a faithful $2$-dimensional representation of the finite group $I(M/K)$.
\item Each representation $\rho_\iota$ is essentially symplectic of weight $1$, i.e., $\rho_\iota(\frac{1}{2})$ is symplectic. In particular, $\rho_\iota^*\cong\rho_\iota(1)$ and $\det\rho_\iota=\omega_K^{-1}$. 
\item The root number $w(\rho_\iota,\psi_K)$ is $1$ or $-1$. 
\end{enumerate}
\end{prop}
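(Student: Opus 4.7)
The plan is to derive the four assertions from the \(F\)-rationality results (Prop.~\ref{prop:F_mod_semis}, Prop.~\ref{prop:F_rationality}) together with the duality and determinant identity from the Weil pairing (Prop.~\ref{prop:abelianV_real_det}). The decomposition \eqref{eq:V_ell(A)_bar_decomposition} itself follows from the isomorphism \(F_\C\simeq\prod_\iota\C\) of \eqref{eq:F_ell_bar_decomposition}: since \((V_\ell A)^*\) is free of \(F_\ell\)-rank \(h=2\) by Thm.~\ref{thm:F_mod_free}, after base change along \(i\) it becomes an \(F_\C\)-module of rank \(2\), splitting as \(\prod_\iota V_\iota\) with \(\dim_\C V_\iota=2\). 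The Galois action is \(F_\C\)-linear, hence respects this splitting and defines the \(\rho_\iota\); semisimplicity is inherited from \(\rho_\ell\in\Rep_{F_\ell}(W_K)\) (Prop.~\ref{prop:F_mod_semis}) by scalar extension in characteristic zero.

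For (1), Prop.~\ref{prop:F_rationality} places the characteristic polynomial \(P_{\ell,\sigma}\) of \(\rho_\ell(\sigma)\) in \(F[T]\); the characteristic polynomial of \(\rho_\iota(\sigma)\) is then its image under \(\iota\). Given a second embedding \(\iota'\colon F\hookrightarrow\C\), write \(\iota'=u\circ\iota\) with \(u\in\Aut(\C)\); then \(\rho_{\iota'}\) and \(\rho_\iota\otimes_u\C\) are semisimple Weil representations whose character functions on \(W_K\) coincide, hence they are isomorphic. Statement (2) follows once (1) is in hand: the inclusion \(I_M\subseteq\ker\rho_\iota|_{I_K}\) is immediate from the construction, while conversely, if \(g\in I_K\) satisfies \(\rho_\iota(g)=1\), then the semisimple (finite-order) operator \(\rho_\iota(g)\) has all eigenvalues equal to \(1\); by Aut\((\C)\)-conjugacy the same holds for every \(\rho_{\iota'}(g)\), so \(g\) acts trivially on \(\W_i(\rho_\ell)\) and hence on \((V_\ell A)^*\), placing \(g\) in \(I_M\). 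The faithfulness of the induced representation of \(I(M/K)\) is then built in.

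For (3), the perfect alternating \(F_\ell\)-bilinear \(W_K\)-equivariant pairing \(P^\lambda_{F_\ell}\) on \(V_\ell A\) supplied by Prop.~\ref{prop:abelianV_real_det} base-changes along \(i\) to a perfect alternating \(F_\C\)-bilinear \(W_K\)-equivariant pairing on \(V_\ell A\otimes_{i,\Q_\ell}\C\) valued in \(F_\C(1)\). The \(F_\C\)-bilinearity forces cross terms between distinct \(\iota\)-components to vanish, and the orthogonal idempotents of \(F_\C\) split off, for each \(\iota\), a perfect alternating \(W_K\)-equivariant pairing on \(V_\iota^*\) valued in \(\C(1)\). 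This yields \(\rho_\iota^*\cong\rho_\iota(1)\) via an alternating isomorphism, so \(\rho_\iota(\tfrac12)\) is symplectic; the determinant identity in Prop.~\ref{prop:abelianV_real_det} localizes to \(\det\rho_\iota=\omega_K^{-1}\).

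Finally (4) reduces to a short calculation: Prop.~\ref{prop:root_n_props}(2) gives \(w(\rho_\iota,\psi_K)\,w(\rho_\iota^*,\psi_K)=\det\rho_\iota(\theta_K(-1))\). Using \(\rho_\iota^*\cong\rho_\iota(1)\) together with the Tate-twist invariance of the root number (Prop.~\ref{prop:root_n_props}(1)) yields \(w(\rho_\iota^*,\psi_K)=w(\rho_\iota(1),\psi_K)=w(\rho_\iota,\psi_K)\), while \(-1\in\O_K^\times\) places \(\theta_K(-1)\) in \(I_K^{\ab}\), on which the unramified character \(\det\rho_\iota=\omega_K^{-1}\) is trivial. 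Hence \(w(\rho_\iota,\psi_K)^2=1\). I expect the subtlest step to be (1): one must carefully pass from the \(F\)-rationality of characteristic polynomials to an Aut\((\C)\)-conjugacy of representations, relying on the standard fact that semisimple representations of a group are determined by their trace function; the remaining steps are bookkeeping with the idempotent decomposition of \(F_\C\) and routine root-number properties.
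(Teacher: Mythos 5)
Your proposal is correct and follows essentially the same route as the paper: the decomposition via the idempotents of $F_\C$ and Thm.~\ref{thm:F_mod_free}, $\Aut(\C)$-conjugacy from the $F$-rationality of characteristic polynomials plus semisimplicity, the common kernel from conjugacy, the symplectic structure by base-changing the pairing of Prop.~\ref{prop:abelianV_real_det}, and the sign computation from Prop.~\ref{prop:root_n_props}(1),(2) with $\det\rho_\iota$ unramified. No gaps; your extra detail on the vanishing of cross terms in (3) and on $\theta_K(-1)$ landing in the image of the units in (4) merely makes explicit what the paper leaves implicit.
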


\begin{proof}
By $F_{\C}$-linearity, the decomposition \eqref{eq:F_ell_bar_decomposition} implies \eqref{eq:V_ell(A)_bar_decomposition}, and we have $V_\iota\cong V_\ell(A)^*\otimes_{F_\ell,\iota}\C$ where the tensor product is taken over the unique extension $F_\ell\to\C$ of $\iota$ (having fixed $i\colon \Q_\ell\hookrightarrow\C$, see~\ref{subs:complex_WD}). Recall that $V_\ell(A)^*$ is a free $F_\ell$-module of dimension $2$ (Thm.~\ref{thm:F_mod_free}),  so $V_\iota$ is a complex Weil representation of dimension $2$. 

\begin{enumerate}[label={(\arabic*)}]
\item From Prop.~\ref{prop:F_rationality} we conclude that the $\C$-characteristic polynomial of $\s\in W_K$ acting on $V_\iota$ is the image via $\iota$ of a polynomial $P_{\s}\in F[T]$ that is independent of $\iota$. We recall that for any two embeddings $\iota,\iota'\colon F\hookrightarrow\C$, there exists a $u\in\Aut(\C)$ such that $\iota'=u\circ\iota$. The characteristic polynomials of $\rho_\iota(\s)\otimes_u\C$ and $\rho_{\iota'}(\s)$ are equal, so, by semisimplicity, $\rho_\iota\otimes_u\C\simeq \rho_{\iota'}$.  
\item Part (1) shows that all the $\rho_\iota$ have the same kernel, which must be the kernel of $\rho_\ell$. In particular, $\ker\rho_\iota|_{I_K}=\ker\rho_\ell \cap I_K=I_M$.
\item Applying the functor $-\otimes_{F_\ell,\iota}\C$ to the objects of Prop.~\ref{prop:abelianV_real_det} gives the result.
\item We use (3) together with Prop.~\ref{prop:root_n_props}.(1),(2) to get 
\begin{align*}w(\rho_\iota,\psi_K)^2&=w(\rho_\iota,\psi_K)w(\rho_\iota(1),\psi_K)\\ &=w(\rho_\iota,\psi_K)w(\rho_\iota^*,\psi_K)\\ &=\det(\rho_\iota(\theta_K(-1))).\end{align*} We know that $\det\rho_\iota=\omega_K^{-1}$ is unramified, so $w(\rho_\iota,\psi_K)^2=1$. \qedhere
\end{enumerate}
\end{proof}

\begin{rem}\label{rem:root_n_indep_iota}
Prop.~\ref{prop:rho_iota} shows that $(\rho_\iota)_\iota$ is a family of $\Aut(\C)$-conjugate representations of even dimensions that are essentially symplectic of odd weight. Then, \autocite[Thm.~1]{rohrlich_galois_inv} shows that $w(\rho_\iota,\psi_K)$ is independent of $\iota$. Let us fix some $\iota$. It follows from \eqref{eq:V_ell(A)_bar_decomposition} and multiplicativity of root numbers that \begin{equation}\label{eq:root_n_glob} w(A/K)=w(W_i(\rho_\ell),\psi_K)=
w(\rho_{\iota},\psi_K)^g.\end{equation} 
\end{rem}

\begin{prop}\label{prop:dichotomy}
We keep the hypotheses and notation of Prop.~\ref{prop:rho_iota}. Let us fix an embedding $\iota\colon F\hookrightarrow \C$ and regard $\rho_\iota$ as a representation of the group $G:=W(M/K)=I(M/K)\cdot\<\varphi_K\>$. Exactly one of the following is true:
\begin{enumerate}[label=\upshape(\alph*)]
\item $M/K$ is abelian. In this case $\rho_\iota=\chi_\iota\oplus\chi_\iota^{-1}\omega^{-1}_K$ for a character $\chi_\iota\colon G\to\C^\times$, which is faithful on $I(M/K)$, and there exists a finite totally ramified cyclic extension $L/K$ which is IM (see Def.~\ref{def:im}) for $A/K$; 
\item $M/K$ is non-abelian. In this case $\rho_\iota$ is irreducible. If $p\neq 2$, then $\rho_\iota=\Ind_H^G\chi_\iota$ where $\chi_\iota$ is a character of a normal subgroup $H\subset G$ of index $2$. Furthermore, every such $H$ is an abelian group containing $I^{\wild}(M/K)$ such that $H\cap I(M/K)$ is cyclic.
\end{enumerate}
\end{prop}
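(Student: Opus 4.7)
The plan is to combine three ingredients: the semisimplicity of $\rho_\iota$ (Prop.~\ref{prop:rho_iota}), the determinant formula $\det\rho_\iota = \omega_K^{-1}$, and the faithfulness of $\rho_\iota|_{I(M/K)}$. First I would decide when $\rho_\iota$ is reducible, establishing the dichotomy between cases (a) and (b), then handle each case separately. The hard part will be producing the induced-representation structure in case (b).

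For the dichotomy: if $G$ is abelian, Schur's lemma makes every finite-dimensional semisimple complex $G$\mydash{}representation a direct sum of characters, so $\rho_\iota$ splits. Conversely, if $\rho_\iota = \chi_\iota \oplus \chi'$, then $\rho_\iota(G)$ lies in the diagonal and is abelian; for any $j \in I(M/K)$ the commutator $[\varphi_K, j]$ belongs to $I(M/K)$ (normality of inertia) and is killed by $\rho_\iota$, hence equals $1$ by faithfulness, so $G$ is abelian. In case (a) the determinant identity gives $\chi' = \chi_\iota^{-1}\omega_K^{-1}$; the unramifiedness of $\omega_K$ reduces both summands on $I(M/K)$ to $\chi_\iota^{\pm 1}$ with common kernel $\ker(\chi_\iota|_{I(M/K)})$, which must be trivial. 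So $I(M/K) \hookrightarrow \C^\times$ is cyclic, and since $\Gal(M/K)$ is abelian the closed subgroup generated by the image of $\varphi_K$ is normal; its fixed field $L'/K$ is Galois, cyclic of order $|I(M/K)|$, totally ramified, and IM by construction.

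For case (b), I expect the main obstacle to be cleanly producing the index-$2$ subgroup $H$. The key input is that for $p \neq 2$, every irreducible complex representation of the finite $p$-group $I^{\wild}(M/K)$ has $p$-power dimension, so $\rho_\iota|_{I^{\wild}(M/K)}$ splits as $\chi_1 \oplus \chi_2$, and by Clifford's theorem these form a single $G$-orbit. If $\chi_1 \neq \chi_2$, the stabilizer $H$ has index $2$, the eigenspace decomposition $V = V_{\chi_1} \oplus V_{\chi_2}$ is $H$-stable, and $\rho_\iota|_H = \chi_\iota \oplus \chi_\iota'$ for characters of $H$; Frobenius reciprocity yields $\rho_\iota = \Ind_H^G \chi_\iota$. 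If instead $\chi_1 = \chi_2$, then $\rho_\iota(I^{\wild})$ is central in the image, and faithfulness on $I(M/K)$ forces $I^{\tame}$ to centralize $I^{\wild}$, so $I(M/K) = I^{\wild} \times I^{\tame}$ is abelian; applying Clifford now to $I(M/K)$ gives two characters that must be distinct (else $\rho_\iota(G)$ would be abelian, contradicting irreducibility), and we recover $H$ the same way.

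The three properties of $H$ follow from the restriction formula $\rho_\iota|_H = \chi_\iota \oplus \chi_\iota^g$ for $g \in G \setminus H$. The inclusion $I^{\wild} \subseteq H$ is automatic: $I^{\wild}$ has odd order and admits no index-$2$ subgroup. For abelianness, I would observe that $[G,G] \subseteq I(M/K)$ since $\<\varphi_K\>$ is abelian in $G = I(M/K) \rtimes \<\varphi_K\>$; commutators of elements of $H$ then lie in $I(M/K) \cap \ker(\rho_\iota|_H)$, which is trivial by faithfulness of $\rho_\iota|_{I(M/K)}$ (note $\rho_\iota(H)$ is abelian). For cyclicity of $H \cap I(M/K)$, the relation $\chi_\iota \cdot \chi_\iota^g = \det\rho_\iota|_H = \omega_K^{-1}|_H$ is trivial on inertia, so $\chi_\iota^g = \chi_\iota^{-1}$ on $H \cap I(M/K)$; faithfulness then makes $\chi_\iota$ inject this group into $\C^\times$.
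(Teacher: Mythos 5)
Your proof is correct, and for the key step of case (b) it takes a genuinely different route from the paper. On the dichotomy, the paper first proves (Lemma~\ref{lem:faithful_on_G}) that $\rho_\iota$ is faithful on all of $G$ and packages the equivalences into Lemma~\ref{lem:comm}; your direct Schur's-lemma-plus-faithfulness argument is the same in substance, though to pass from ``$G$ abelian'' to ``$M/K$ abelian'' as in the statement you should add the one-line remark that $G=W(M/K)$ is dense in $\Gal(M/K)$. The real divergence is in case (b): the paper obtains $H$ and $\chi_\iota$ by citing Tate's (2.2.5.3), i.e.\ the non-existence of primitive two-dimensional representations of $W_K$ for $p\neq 2$, whereas you derive the induced structure by Clifford theory applied to the finite normal subgroup $I^{\wild}(M/K)$ (whose irreducible complex representations have $p$-power dimension, forcing $\rho_\iota|_{I^{\wild}(M/K)}$ to split into two characters) and, in the degenerate case where those characters coincide, to $I(M/K)$, which you first show is abelian via the ``$G/Z(G)$ cyclic'' argument. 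This is self-contained and more elementary, at the cost of a short case analysis, and it exploits the specific structure $G=I(M/K)\rtimes\<\varphi_K\>$ rather than a general classification theorem. Your verification of the three properties of $H$ matches the paper's (the odd-order argument for $I^{\wild}(M/K)\subseteq H$ is exactly the paper's index computation, and the cyclicity of $H\cap I(M/K)$ via $\chi_\iota^g=\chi_\iota^{-1}$ on inertia is the same), except that you substitute faithfulness on $I(M/K)$ together with $[G,G]\subseteq I(M/K)$ for the paper's faithfulness on all of $G$; both work.
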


Before proving the proposition we establish a few lemmas. 

\begin{lem} \label{lem:faithful_on_G}
The representation $\rho_\iota$ is faithful on $G$.
\end{lem}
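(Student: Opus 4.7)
The plan is to exploit the determinant formula $\det\rho_\iota=\omega_K^{-1}$ from Prop.~\ref{prop:rho_iota}(3) to separate the Frobenius part from the inertia part. First I would use the semidirect product decomposition $G=I(M/K)\rtimes\langle\varphi_K\rangle$ (which is legitimate on the level of $G$ because $M\supseteq K^{\unr}$, so the image of $\varphi_K$ in $G$ generates a free abelian group projecting isomorphically onto the Frobenius part of $\Ga_{k_K}$) to write any $\s\in G$ uniquely as $\s=j\varphi_K^n$ with $j\in I(M/K)$ and $n\in\Z$.

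Next I would suppose $\s\in\ker\rho_\iota$ and compute
\[\det\rho_\iota(\s)=\omega_K^{-1}(\s)=\omega_K^{-1}(\varphi_K^n)=q_K^n,\]
where the second equality uses that $\omega_K$ is unramified and hence trivial on the image of $I_K$ in $G$. Since the left-hand side equals $1$ and $q_K\geq2$, this forces $n=0$, so $\s=j\in I(M/K)$. Faithfulness of $\rho_\iota$ on $I(M/K)$, which is exactly the content of Prop.~\ref{prop:rho_iota}(2), then gives $j=1$, concluding that $\s=1$.

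The only points requiring a brief verification are that $\omega_K$ and $\det\rho_\iota$ genuinely descend to characters of the quotient $W_K\twoheadrightarrow W(M/K)=G$: the former is trivial on $I_K$ hence on $I_M$, and the latter is trivial on $I_M$ by Prop.~\ref{prop:rho_iota}(2). No real obstacle is expected; the argument is essentially a one-line determinant computation plus the previously established faithfulness on inertia.
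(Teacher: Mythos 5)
Your proof is correct and is essentially the paper's argument: the paper notes that the eigenvalues of $\rho_\iota$ on $I(M/K)$ are roots of unity while those of $\rho_\iota(\varphi_K)$ have absolute value $\sqrt{q_K}$, which is just the eigenvalue-level version of your determinant computation $\det\rho_\iota(\varphi_K^n)=q_K^{n}$, and both arguments then conclude via faithfulness on $I(M/K)$ from Prop.~\ref{prop:rho_iota}.(2). Your variant is marginally more self-contained, since it invokes only the already-established identity $\det\rho_\iota=\omega_K^{-1}$ rather than the weight-one purity of Frobenius eigenvalues.
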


\begin{proof}
The eigenvalues of $\rho_\iota$ on $I(M/K)$ are roots of unity and the eigenvalues of $\varphi_K$ have absolute values $\sqrt{q_K}$. Since $\rho_\iota$ is faithful on $I(M/K)$, we infer that $\rho_\iota$ is faithful on $G$. 
\end{proof}

\begin{lem}\label{lem:comm}
The following statements are equivalent:
\begin{enumerate}
\item Every IM-extension $L'/K$ for $A/K$ is abelian; 
\item $M/K$ is abelian;
\item $\rho_\ell(\Ga_K)$ is abelian;
\item $\rho_\ell(W_K)$ is abelian;
\item $\rho_\iota(W_K)$ is abelian;
\item $\rho_\iota\colon W_K\to \GL_2(\C)$ is reducible;
\item $\rho_\iota$ is a direct sum of two characters of $W_K$;
\item $G$ is abelian.
\end{enumerate}
\end{lem}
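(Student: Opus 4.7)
The plan is to organise the eight conditions into three interlinked groups. The key observation is that, by Lemma~\ref{lem:faithful_on_G}, $\rho_\iota$ factors through a faithful representation of the finite-by-cyclic group $G = W(M/K) = I(M/K) \cdot \langle\varphi_K\rangle$; since $\rho_\iota$ is a scalar extension of $\rho_\ell$, the representation $\rho_\ell|_{W_K}$ is a fortiori faithful on $G$ as well. Hence $\rho_\ell(W_K) \cong G \cong \rho_\iota(W_K)$ as abstract groups, which yields (4) $\Leftrightarrow$ (5) $\Leftrightarrow$ (8) immediately.

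Next, the Galois-theoretic equivalences follow from density arguments. Since $\langle \varphi_K \rangle$ is dense in $\widehat{\langle\varphi_K\rangle}$, the Weil group $W_K$ is dense in $\Ga_K$; combining continuity of $\rho_\ell$ with the fact that in a Hausdorff topological group the closure of an abelian subgroup is abelian, we obtain (3) $\Leftrightarrow$ (4). The same density applied inside $\Gal(M/K)$ shows that the closure of $G$ in $\Gal(M/K)$ is all of $\Gal(M/K)$, whence (2) $\Leftrightarrow$ (8). For (1) $\Leftrightarrow$ (2), the direction (2) $\Rightarrow$ (1) is immediate as any subextension of an abelian extension is abelian. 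For the converse, I would invoke Lemma~\ref{lem:min_ext_good}(3) to produce a totally ramified IM-extension $L'/K$; by assumption $L'/K$ is abelian, and then $M = L' \cdot K^{\unr}$ is the compositum of two abelian extensions of $K$ and is therefore itself abelian.

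To fold in conditions (6) and (7), I observe that a $2$-dimensional semisimple complex representation is reducible if and only if it decomposes as a direct sum of two characters, which gives (6) $\Leftrightarrow$ (7). The implication (7) $\Rightarrow$ (5) is transparent, since the image then lies in the abelian diagonal torus. Conversely, (8) $\Rightarrow$ (7) because every irreducible complex representation of an abelian group is $1$-dimensional, so the semisimple $\rho_\iota$, viewed as a representation of $G$, decomposes as a sum of two characters. The main technical point will be verifying that $\rho_\iota$ is semisimple as a complex Weil representation: this should follow from Prop.~\ref{prop:F_mod_semis} together with the fact that scalar extension from $F_\ell$ to $\C$ preserves semisimplicity in characteristic zero, although one may alternatively argue that $\rho_\iota(\varphi_K)$ is semisimple while the restriction of $\rho_\iota$ to the finite group $I(M/K)$ is semisimple by Maschke's theorem.
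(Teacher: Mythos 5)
Your proof is correct and uses essentially the same ingredients as the paper's (faithfulness of $\rho_\iota$ on $G$, density of $W(M/K)$ in $\Gal(M/K)$, semisimplicity of $\rho_\iota$, Schur's lemma, and the compositum argument for $M=L'K^{\unr}$); the paper merely arranges the implications into a single cycle $(1)\Rightarrow(2)\Rightarrow\cdots\Rightarrow(8)\Rightarrow(2)$ rather than your blocks of equivalences. The only imprecision is cosmetic: $\rho_\iota$ is a direct factor of the scalar extension $\W_i(\rho_\ell)$ rather than the scalar extension itself, but faithfulness of a factor still forces faithfulness of $\rho_\ell|_{W_K}$ on $G$, as you need.
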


\begin{proof}
The lemma is a slight generalisation of \autocite[Prop.~2.(ii)]{rohrlich_proof}. (1) implies that $M=L'K^{\unr}$ is abelian over $K$ as the compositum of two abelian extensions in $\overline{K}$, so we have (2). We have $(2)\Rightarrow(3)$ since $\rho_\ell$ factors through the quotient $\Gal(M/K)$ of $\Ga_K$. Restricting $\rho_\ell$ to $W_K$ gives $(3)\Rightarrow(4)$. The implication $(4)\Rightarrow(5)$ follows from the natural projection $\W_i(\rho_\ell)\to \rho_\iota$ (see \eqref{eq:V_ell(A)_bar_decomposition}). Schur's lemma gives $(5)\Rightarrow(6)$, and $(6)\Rightarrow (7)$ follows by semisimplicity of $\rho_\iota$. The statement (7) implies that $\rho_\iota(G)$ is abelian, and thus $G$ is abelian by Lemma~\ref{lem:faithful_on_G}, thus giving $(8)$. We prove $(8)\Rightarrow (2)$ by recalling that $G=W(M/K)$ is a dense subgroup of $\Gal(M/K)$, so commutativity of the former implies comutativity of the latter. We are left to establish $(2)\Rightarrow(1)$, which follows by recalling that every IM-extension for $A/K$ is a subextension of $M/K$. \end{proof} 

\begin{proof}[Proof of Prop.~\ref{prop:dichotomy}]

If $M/K$ is abelian, then $\rho_\iota$ decomposes into a sum of two characters (Lemma~\ref{lem:comm}) whose product is $\det\rho_\iota=\omega_K^{-1}$ (see Prop.~\ref{prop:rho_iota}.(3)), so we may write $\rho_\iota=\chi_\iota\oplus\chi_\iota^{-1}\omega_K^{-1}$. Since $\omega_K$ is unramified, the character $\chi_\iota$ sends $I(M/K)$ injectively to a finite subgroup of $\C^\times$, which must be cyclic. Let us fix a Frobenius lift $\varphi_K$, and let $L=L'$ be the extension given by Lemma~\ref{lem:min_ext_good}.(3), which is abelian by Lemma~\ref{lem:comm}. We see that $\Gal(L/K)=I(L/K)\simeq I(M/K)$ is cyclic.

Let us suppose that $M/K$ is non-abelian. Lemma~\ref{lem:comm} shows that $\rho_\iota$ is irreducible. If $p\neq 2$, then applying \autocite[\nopp (2.2.5.3)]{tate_background} shows that there exist a subgroup $H\subset G$ of index $2$ and a character $\chi_\iota$ of $H$ such that $\rho_\iota=\Ind_H^G\chi_\iota$. 

By adjunction, the restriction $\rho_\iota|_H$ contains $\chi_\iota$ as a subrepresentation, so $\rho_\iota|_H=\chi_\iota\oplus\chi_\iota^{-1}\omega_K^{-1}$, and thus $\rho_\iota|_H$ has abelian image. By faithfulness, $H$ is abelian, and $\chi_\iota$ identifies the finite group $I(M/K)\cap H$ with its cyclic image in $\C^\times$.  

If $H$ does not contain $I^{\wild}(M/K)$, then $H\cdot I^{\wild}(M/K)=G$ and \[2=\left|G/H\right|= \left|\bigslant{I^{\wild}(M/K)}{I^{\wild}(M/K)\cap H}\right|.\] Therefore, 2 must be a power of $p$, which is impossible if $p\neq 2$. 
\end{proof}

\begin{cor}\label{cor:cyclic_wild_inertia}
The group $I^{\wild}(M/K)$ is cyclic if $M/K$ is abelian or if $p\neq 2$. 
\end{cor}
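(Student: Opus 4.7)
The plan is to deduce the corollary immediately from Prop.~\ref{prop:dichotomy}, treating the two hypotheses separately. In both cases the goal is to exhibit $I^{\wild}(M/K)$ as a subgroup of a group that is already known to be cyclic.

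First I would dispose of the case where $M/K$ is abelian (regardless of whether $p=2$). By Prop.~\ref{prop:dichotomy}.(a), $\rho_\iota=\chi_\iota\oplus\chi_\iota^{-1}\omega_K^{-1}$ on $G$. Since $\omega_K$ is unramified, its restriction to $I(M/K)$ is trivial, so $\rho_\iota|_{I(M/K)}=\chi_\iota|_{I(M/K)}\oplus \chi_\iota^{-1}|_{I(M/K)}$. By Prop.~\ref{prop:rho_iota}.(2) this restriction is faithful, so $\chi_\iota$ embeds $I(M/K)$ into $\C^\times$. A finite subgroup of $\C^\times$ is cyclic, so $I(M/K)$ itself is cyclic, and a fortiori so is its subgroup $I^{\wild}(M/K)$.

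Next I would handle the case $p\neq 2$ with $M/K$ non-abelian. Prop.~\ref{prop:dichotomy}.(b) provides a subgroup $H\subset G$ of index $2$ containing $I^{\wild}(M/K)$ and such that $H\cap I(M/K)$ is cyclic. Since $I^{\wild}(M/K)\subseteq I(M/K)$ and $I^{\wild}(M/K)\subseteq H$, we have
\[I^{\wild}(M/K)\subseteq H\cap I(M/K),\]
and thus $I^{\wild}(M/K)$ is cyclic as a subgroup of a cyclic group.

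There is really no obstacle here; the statement is a direct corollary of Prop.~\ref{prop:dichotomy}, and the only content is unwinding the conclusion (a) in the abelian case (noting the restriction of $\omega_K$ to inertia is trivial) and invoking conclusion (b) in the non-abelian case. The harder work was the trichotomy and induction argument already carried out in the proof of Prop.~\ref{prop:dichotomy}.
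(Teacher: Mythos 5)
Your proposal is correct and follows essentially the same route as the paper: in the abelian case the faithfulness of $\chi_\iota$ on $I(M/K)$ (already asserted in Prop.~\ref{prop:dichotomy}.(a)) makes $I(M/K)$ cyclic, and in the non-abelian case with $p\neq 2$ one observes $I^{\wild}(M/K)\subseteq H\cap I(M/K)$, which Prop.~\ref{prop:dichotomy}.(b) says is cyclic. Your extra unwinding of why $\chi_\iota$ is injective on inertia is harmless but not needed, since the proposition states it directly.
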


\begin{proof}
If $M/K$ is abelian, then every subgroup of $I(M/K)$ is cyclic by Prop.~\ref{prop:dichotomy}.(a). If $M/K$ is non-abelian and $p\neq 2$, then $I^{\wild}(M/K)$ is contained in the cyclic group $H\cap I(M/K)$.
\end{proof}

\begin{rem}
We note that the subgroup $H$ in the case (b) is not unique for $\iota$. We will show how to make a more precise choice of $H$ in Lemma~\ref{lem:H_unramified}.\end{rem}

\subsection*{Other restrictions} 

\begin{lem}\label{lem:phi(e)_and_2g}
Let $A/K$ be an abelian variety of dimension $g$ with RM and potentially good reduction. Let $\s\in \rho_\ell(I_K)$ be an element of order $d$. Denote by $\varphi$ Euler's totient function. Then $\varphi(d)$ divides $2g$. 
\end{lem}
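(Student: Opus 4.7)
The plan is to exploit the $F$-rationality of the characteristic polynomial of $\s$ (Prop~\ref{prop:F_rationality}) together with the Galois theory of cyclotomic fields.

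First, I would invoke Thm~\ref{thm:F_mod_free} to note that $h = 2g/[F:\Q] = 2$, so the $F_\ell$-characteristic polynomial $P_{\ell,\s}$ of $\s$ has degree $2$, and by Prop~\ref{prop:F_rationality} it lies in $F[T]$. The key claim is that some primitive $d$-th root of unity $\zeta \in \overline{F}$ is a root of $P_{\ell,\s}$. To establish this, I would fix any embedding $\iota\colon F \hookrightarrow \C$ and recall from Prop~\ref{prop:rho_iota} that $\iota(P_{\ell,\s}) \in \C[T]$ is the characteristic polynomial of $\rho_\iota(\s)$ on the $2$-dimensional complex space $V_\iota$. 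Combining the factorisation $\W_i(\rho_\ell) \cong \prod_\iota \rho_\iota$ with the $\Aut(\C)$-conjugacy of the family $(\rho_\iota)_\iota$, the order of each $\rho_\iota(\s)$ in $\GL(V_\iota)$ must coincide with the order of $\s$ in $\rho_\ell(I_K)$, hence equals $d$. Then $\rho_\iota(\s)$, being of finite order, is diagonalisable with $d$-th roots of unity as eigenvalues, and the fact that it has exact order $d$ forces a primitive $d$-th root of unity to occur among them; pulling this back through $\iota$ exhibits the desired root $\zeta$ of $P_{\ell,\s}$ in $\overline{F}$.

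The remainder is elementary Galois theory. Since $\zeta$ is a root of a degree-$2$ polynomial over $F$, we have $[F(\zeta):F] \leq 2$. As $\Q(\zeta)/\Q$ is abelian, the standard formula for the compositum with a Galois extension gives
\[
[F(\zeta):F] \;=\; [\Q(\zeta):F \cap \Q(\zeta)] \;=\; \frac{\varphi(d)}{e},
\]
where $e := [F \cap \Q(\zeta):\Q]$ divides $g = [F:\Q]$. Therefore $\varphi(d) = e \cdot [F(\zeta):F] \in \{e,\, 2e\}$, and in either case $\varphi(d)$ divides $2g$.

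The only delicate point is ensuring that a primitive $d$-th root of unity really occurs as an eigenvalue of some $\rho_\iota(\s)$ rather than only as an eigenvalue of $\rho_\ell(\s)$ over $\overline{\Q_\ell}$ (which would not yield the crucial degree-$2$ bound over $F$). This is exactly what the $\Aut(\C)$-conjugacy statement in Prop~\ref{prop:rho_iota}.(1) achieves, by synchronising the orders of all $\rho_\iota(\s)$ with that of $\s$ itself.
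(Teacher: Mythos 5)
Your overall strategy coincides with the paper's: use Thm.~\ref{thm:F_mod_free} and Prop.~\ref{prop:F_rationality} to see that $\s$ has a degree-$2$ characteristic polynomial with coefficients in $F$ whose roots are roots of unity, locate a primitive $d$-th root of unity $\zeta$ among those roots, and conclude by a degree count. Your finish via the compositum formula $[F(\zeta):F]=\varphi(d)/e$ with $e\mid g$ is a perfectly valid variant of the paper's finish (which instead observes that $\Tr(\s)=\zeta+\zeta^{-1}\in F$ has degree $\varphi(d)/2$ over $\Q$ when $d>2$).

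There is, however, one step that does not hold as stated: you claim that a diagonalisable matrix of exact order $d$ must have a primitive $d$-th root of unity among its eigenvalues. This is false in general for $2\times 2$ matrices, since the order of such a matrix is only the least common multiple of the orders of its eigenvalues; for instance, eigenvalues of orders $4$ and $3$ give a matrix of order $12$ with no primitive $12$-th root of unity as an eigenvalue, and in that scenario your argument would only yield $\varphi(4)\mid 2g$ and $\varphi(3)\mid 2g$, not $\varphi(12)\mid 2g$. The missing ingredient is Prop.~\ref{prop:rho_iota}.(3): since $\det\rho_\iota=\omega_K^{-1}$ is unramified and $\s$ lies in the image of inertia, $\det\rho_\iota(\s)=1$, so the two eigenvalues are $\zeta$ and $\zeta^{-1}$ and hence have a common order $d'$; then $\rho_\iota(\s)^{d'}=\id$ forces $d'=d$, and $\zeta$ really is primitive of order $d$ (this is exactly how the paper proceeds). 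Alternatively, you can bypass the determinant entirely: the splitting field of $P_{\ell,\s}$ over $F$ has degree at most $2$ and contains $\Q(\zeta_{d_1},\zeta_{d_2})=\Q(\zeta_d)$ where $d_1,d_2$ are the orders of the two eigenvalues, so your compositum computation applies verbatim to $F(\zeta_d)$. With either repair the proof is complete.
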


\begin{proof}
Recall from Prop.~\ref{prop:F_rationality} that the $F_\ell$-characteristic polynomial $P_\s$ of $\s$ is of degree $2$ and has coefficients in $F$. Since $\s^d=\id$, the complex roots of $P_{\s}$ are among the roots of $X^d-1$. Since $\det \s=1$ (by Prop.~\ref{prop:rho_iota}.(3)), the two roots of $P_{\s}$ are roots of unity $\zeta$ and $\zeta^{-1}$ of some order $d'|d$. Then $\s^{d'}=\id$, which implies that $d'=d$. On the other hand, $\a:=\Tr(\s)=\zeta+\zeta^{-1}\in F$. Since $\a$ has degree $\max\{1,\frac{\varphi(d)}{2}\}$ over $\Q$, we conclude that $\varphi(d)\big| 2 g$.
\end{proof}

As we have seen in Prop.~\ref{prop:rho_iota}.(2), the representations $\rho_\iota|_{I_K}$ induce faithful $\Aut(\C)$-conjugate representations of a finite group $I(M/K)$. Let $p^re=|I(M/K)|$ with $e$ prime to $p$. If $p\neq 2$, then Cor.~\ref{cor:cyclic_wild_inertia} shows that $I^{\wild}(M/K)$ is cyclic. 

\begin{prop}\label{prop:odd_dim_rootn}
For $A/K$ as in Lemma~\ref{lem:phi(e)_and_2g}, the following statements hold:
\begin{enumerate}
\item If $g$ is even, then $w(A/K)=1$;
\item If $r\geq1$ and $p\neq2$, then $p^{r-1}(p-1)|2g$; in particular, if $g$ is odd, then $p\equiv3\bmod 4$;  
\item If $g$ is odd, then $e$ can only be $s^m$, $2s^m$, or $4$, where $m\geq0$ and $s\equiv3\bmod4$ is a prime different from $p$.
\end{enumerate}
\end{prop}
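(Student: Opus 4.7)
The plan is to treat the three parts in order, each using the rationality/geometric constraints already established: \eqref{eq:root_n_glob} plus Prop.~\ref{prop:rho_iota}.(4) for part~(1); Cor.~\ref{cor:cyclic_wild_inertia} plus Lemma~\ref{lem:phi(e)_and_2g} for part~(2); and a lift of a tame generator plus Lemma~\ref{lem:phi(e)_and_2g} together with a $2$-adic valuation count for part~(3).

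For part~(1), by \eqref{eq:root_n_glob} we have $w(A/K)=w(\rho_\iota,\psi_K)^g$, and by Prop.~\ref{prop:rho_iota}.(4) the factor $w(\rho_\iota,\psi_K)$ lies in $\{\pm 1\}$, so the $g$-th power is $1$ as soon as $g$ is even. This is immediate.

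For part~(2), the hypothesis $p\neq 2$ allows us to invoke Cor.~\ref{cor:cyclic_wild_inertia}, so $I^{\wild}(M/K)$ is a cyclic group of order $p^r$. If $r\geq 1$, pick a generator; using that $\rho_\ell|_{I_K}$ factors faithfully through $I(M/K)$ (Prop.~\ref{prop:rho_iota}.(2)), this yields an element $\s\in\rho_\ell(I_K)$ of exact order $p^r$. Lemma~\ref{lem:phi(e)_and_2g} then gives $p^{r-1}(p-1)=\varphi(p^r)\mid 2g$. For the ``in particular'' clause, assume further that $g$ is odd. Then $v_2(2g)=1$, and since $p-1\mid p^{r-1}(p-1)\mid 2g$ with $p$ odd, we get $v_2(p-1)\leq 1$. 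Combined with $p-1$ being even, this forces $p\equiv 3\pmod 4$.

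For part~(3), the tame quotient $I^{\tame}(M/K)=I(M/K)/I^{\wild}(M/K)$ is cyclic of some order $e$ coprime to $p$. Since $|I^{\wild}(M/K)|=p^r$ and $\gcd(e,p)=1$, Schur--Zassenhaus provides a complement, and in particular an element $\s\in I(M/K)$ of exact order $e$. Lemma~\ref{lem:phi(e)_and_2g} then gives $\varphi(e)\mid 2g$, hence $v_2(\varphi(e))\leq 1$ when $g$ is odd. Writing $e=2^a\prod_i s_i^{a_i}$ with $s_i$ odd primes distinct from $p$, one has
\[v_2(\varphi(e))=\max(0,a-1)+\sum_i v_2(s_i-1),\]
and every odd prime contributes at least $1$. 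A short case analysis on $a\in\{0,1,2,\geq 3\}$ pins down the possibilities: $a\geq 3$ is excluded; $a=2$ forces the product to be empty, giving $e=4$; while $a\in\{0,1\}$ allows at most one odd prime factor $s$, and it must satisfy $v_2(s-1)=1$, i.e.\ $s\equiv 3\pmod 4$. Thus $e\in\{s^m,\,2s^m,\,4\}$ with $s$ as stated.

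The arguments are essentially mechanical once the inputs from Sections~\ref{sect:ab_var_action}--\ref{sect:real_AV_potG_red} are in place; the only mild subtlety is the lift in part~(3) to get an honest element of $I(M/K)$ of order $e$, but Schur--Zassenhaus handles it since $\gcd(e,p)=1$. The main ``obstacle'' is bookkeeping the $2$-adic valuations in part~(3) without double-counting the $a=1$ versus $a=0$ contribution from $\varphi(2^a)$; beyond that, nothing deeper is needed.
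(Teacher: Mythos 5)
Your proof is correct and follows essentially the same route as the paper: part (1) from \eqref{eq:root_n_glob} and Prop.~\ref{prop:rho_iota}.(4), and parts (2)--(3) by feeding elements of exact order $p^r$ and $e$ into Lemma~\ref{lem:phi(e)_and_2g} and reading off the constraint $v_2(\varphi(\cdot))\leq 1$ when $g$ is odd. Your explicit Schur--Zassenhaus lift and $2$-adic bookkeeping merely spell out what the paper leaves as ``the usual formulas of $\varphi(e)$.''
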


\begin{proof}
(1) follows from Prop.~\ref{prop:rho_iota}.(4) and \eqref{eq:root_n_glob}.
Applying Lemma~\ref{lem:phi(e)_and_2g} to a generator of $I^{\wild}(M/K)$ and using $\varphi(p^r)=p^{r-1}(p-1)$ we obtain (2). 
Suppose now that $g$ is odd. If $\varphi(e)$ is odd, then $e$ is $1$ or $2$. If $\varphi(e)$ is even, then applying Lemma~\ref{lem:phi(e)_and_2g} to a generator of $I^{\tame}(M/K)$ gives $\varphi(e)\equiv 2\bmod 4$. Now (3) follows from the usual formulas of $\varphi(e)$.  
\end{proof}

\section{The case of abelian inertia}\label{sect:ab_inertia}

We continue to work in the setting of \ref{subs:setup_pot_good} and suppose that $p\neq2$. We adopt the notation of Prop.~\ref{prop:rho_iota} and regard each $\rho_\iota$ as a faithful representation of $G=W(M/K)$ (see Lemma~\ref{lem:faithful_on_G}). Let us write $|I(M/K)|=p^re$ with $e=|I^{\tame}(M/K)|$, so that $p\nmid e$. Applying \eqref{eq:root_n_glob}, it suffices to determine the root number $w(\rho_{\iota}, \psi_K)$ for a fixed embedding $\iota\colon F\hookrightarrow \C$.
\begin{thm}\label{thm:av_root_N:abelian}
If $\rho_\ell(\Ga_K)$ is abelian, then $e\mid (q_K-1)$ and \[w(\rho_{\iota},\psi_K)=(-1)^{\frac{q_K-1}{e}}.\]
\end{thm}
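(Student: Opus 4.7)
The plan is to exploit the decomposition $\rho_\iota = \chi_\iota \oplus \chi_\iota^{-1}\omega_K^{-1}$ provided by Proposition~\ref{prop:dichotomy}(a), reduce the root number to a single character value $\chi_\iota(\theta_K(-1))$, and then evaluate this value via local class field theory restricted to the tame part.

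First, by multiplicativity of root numbers under direct sums together with the invariance $w(\rho'(s),\psi_K)=w(\rho',\psi_K)$ for $s\in\R$ (Proposition~\ref{prop:root_n_props}(1), applied with $s=-1$), I obtain $w(\rho_\iota,\psi_K)=w(\chi_\iota,\psi_K)\,w(\chi_\iota^{-1},\psi_K)$. Then Proposition~\ref{prop:root_n_props}(2) applied to the one-dimensional representation $\chi_\iota$, whose dual is $\chi_\iota^{-1}$ and whose determinant is itself, gives
\[w(\rho_\iota,\psi_K)=\chi_\iota(\theta_K(-1)).\]

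Next, I would verify the divisibility $e\mid(q_K-1)$ using Theorem~\ref{thm:galois_split_hard}: one has $\varphi_K^{-1}\tau_K\varphi_K=\tau_K^{q_K}$ in $I^\tame(M/K)$, and since $M/K$ is abelian this conjugation must be trivial, forcing $q_K\equiv 1\pmod e$. To compute the character value, set $\tilde\chi:=\chi_\iota\circ\theta_K\colon K^\times\to\C^\times$. Because $\chi_\iota$ is faithful on the cyclic group $I(M/K)$ of order $p^re$ (Cor.~\ref{cor:cyclic_wild_inertia}) and $\theta_K$ sends $\O_K^\times$ surjectively onto the inertia of $W(M/K)^{\ab}=W(M/K)$, the restriction $\tilde\chi|_{\O_K^\times}$ surjects onto $\mu_{p^re}$. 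Using the canonical splitting $\mu_{p^re}=\mu_{p^r}\times\mu_e$, write $\tilde\chi=\tilde\chi_p\cdot\tilde\chi_e$ accordingly. Since $p\neq 2$, the intersection $\mu_{p^r}\cap\mu_2$ is trivial, so $\tilde\chi_p(-1)=1$ and $\tilde\chi(-1)=\tilde\chi_e(-1)$.

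Finally, $\tilde\chi_e$ has order $e$ coprime to $p$, so it is trivial on the pro-$p$ group $1+\m_K$ and factors through an order-$e$ surjection $k_K^\times\twoheadrightarrow\mu_e$. Writing $-1=g^{(q_K-1)/2}$ for a generator $g$ of the cyclic group $k_K^\times$ of order $q_K-1$, one finds $\tilde\chi_e(-1)=\zeta^{(q_K-1)/2}$ for some primitive $e$-th root of unity $\zeta$, and a short case analysis on the parity of $(q_K-1)/e$ shows that this equals $(-1)^{(q_K-1)/e}$ independently of $\zeta$. I expect the most delicate point is to keep track of the chain of identifications tying $\chi_\iota\in\widehat{G}$ to a character of $K^\times$ and then to a character of $k_K^\times$, using that the tame quotient of $\theta_K|_{\O_K^\times}$ gives the canonical isomorphism $k_K^\times\iso I^\tame_{K,\ab}$; everything else is a direct manipulation of $\varepsilon$-factors and cyclic groups.
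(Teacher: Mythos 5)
Your argument is correct and follows essentially the same route as the paper: reduce $w(\rho_\iota,\psi_K)$ to the sign $\chi_\iota(\theta_K(-1))$ via the decomposition of Prop.~\ref{prop:dichotomy}(a) together with Prop.~\ref{prop:root_n_props}(1),(2), and then evaluate that sign through the tame quotient $k_K^\times$ of $\theta_K|_{\O_K^\times}$, where $-1$ lands in the index-$e$ subgroup precisely when $(-1)^{(q_K-1)/e}=1$. The only cosmetic differences are that you derive $e\mid(q_K-1)$ from the relation $\varphi_K^{-1}\tau_K\varphi_K=\tau_K^{q_K}$ and abelianness rather than from the surjectivity of $k_K^\times\twoheadrightarrow I^{\tame}(L/K)$ induced by reciprocity (both work), and that the cyclicity of all of $I(M/K)$ in the abelian case should be cited from Prop.~\ref{prop:dichotomy}(a) rather than Cor.~\ref{cor:cyclic_wild_inertia}, which only treats the wild part.
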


\begin{proof} By Lemma~\ref{lem:comm}, the image $\rho_\ell(\Ga_K)$ is abelian if and only if $M/K$ is abelian, so we are in the case (a) of Prop.~\ref{prop:dichotomy}. Then, we have a decomposition $\rho_{\iota}=\chi_{\iota}\oplus\chi_{\iota}^{-1}\omega_K^{-1}$. Using multiplicativity of the root number and Prop.~\ref{prop:root_n_props}.(1),(2) we obtain \[w(\rho_{\iota},\psi_K)=\chi_{\iota}(\theta_K(-1))\in\{\pm1\}.\] 

By Lemma~\ref{lem:min_ext_good}, there exists a finite IM-extension $L/K$ for $A/K$. By Lemma~\ref{lem:comm}, $L/K$ is abelian. We identify $I(L/K)\simeq I(M/K)$. Then the composition $(\chi_{\iota}\circ\theta_K)|_{\O_K^\times}$ can be seen as the following sequence of group homomorphisms: 

\begin{equation}\label{eq:finite_recipr}\begin{tikzcd}\
\O_K^\times \arrow[r, two heads] & \bigslant{\O_K^\times}{\norm_{L/K}(\O_{L}^\times)} \arrow[r,"\theta_{L/K}" , "\widetilde{\hspace{0.5cm}}"'] &[0.1cm] I^{\wild}(L/K)\times I^{\tame}(L/K) \arrow[hookrightarrow]{r}{\chi_{\iota}} & \C^\times.
\end{tikzcd}\end{equation}

Let us recall the identification $\O_K^\times\cong k_K^{\times}\times(1+\m_K)$. The image of $k_K^\times$ is trivial in $I^{\wild}(L/K)$, so $\theta_{L/K}$ induces a homomorphism $\theta_{k_K}\colon k_K^\times\to I^{\tame}(L/K)$. On the other hand, the image of the pro-$p$ group $1+\m_K$ is trivial in $I^{\tame}(L/K)$, so $\theta_{k_K}$ must be surjective. In particular, $e\mid(q_K-1)$. Since $p\neq2$, the class of $-1$ in $k_K^\times$ is nontrivial, so $\chi_{\iota}(\theta_K(-1))=\chi_{\iota}(\theta_{k_K}(-1))=1$ if and only if $-1$ belongs to the unique subgroup of index $e$ in $k_K^\times$, which can be characterized by $\{x\in k_K^\times:x^{\frac{q_K-1}{e}}=1\}$.
\end{proof}

\subsection{Representation having non-abelian image}\label{subsect:tame_reduction_calc} It remains to study the case where $\rho_\ell(\Ga_K)$ is non-abelian, which is the case (b) of Prop.~\ref{prop:dichotomy} (see Lemma~\ref{lem:comm}), when $M/K$ is non-abelian. In this case, $\rho_{\iota}$ is induced by a character $\chi_{\iota}$ of an abelian normal subgroup $H\subset G=W(M/K)$ of index 2, which contains $I^{\wild}(M/K)$. 

\begin{lem}\label{lem:frob_center}
We can choose a geometric Frobenius lift $\varphi_K$ so that $\varphi_K^2$ is contained in the center of $G$.
\end{lem}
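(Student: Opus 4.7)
The plan rests on a simple group-theoretic observation and then checking that the needed Frobenius lift exists. Recall from Prop.~\ref{prop:dichotomy}.(b) that $H$ is an \emph{abelian} normal subgroup of $G$ of index $2$. For any $g \in G \smallsetminus H$, the coset $gH$ generates $G/H$, so $g^2 \in H$. Since $H$ is abelian, $g^2$ commutes with every element of $H$; it also commutes with $g$ itself, hence with $\langle H, g \rangle = G$. Therefore $g^2 \in Z(G)$ for every $g \in G \smallsetminus H$.

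Thus it suffices to exhibit a geometric Frobenius lift whose image in $G = W(M/K)$ does not lie in $H$. If the lift chosen in \ref{subs:setup_pot_good} already has this property, we are done. Otherwise, I would modify it by an inertia element: for any $\tau \in I_K$, the element $\tau \varphi_K \in \Ga_K$ is still a geometric Frobenius lift, and its image in $G$ is $\bar\tau \cdot \varphi_K$ with $\bar\tau \in I(M/K)$. Assuming $\varphi_K \in H$, one has $\bar\tau \varphi_K \notin H$ if and only if $\bar\tau \notin H$.

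It remains to verify that such a $\bar\tau$ exists under the assumption $\varphi_K \in H$. In this case the class of $\varphi_K$ in $G/H \cong \Z/2\Z$ is trivial, but the surjection $G = I(M/K)\cdot\langle\varphi_K\rangle \twoheadrightarrow G/H$ is nontrivial, so the image of $I(M/K)$ in $G/H$ must be all of $\Z/2\Z$; equivalently, $I(M/K) \not\subset H$. Picking any $\bar\tau \in I(M/K) \smallsetminus H$ and lifting it to $\tau \in I_K$ produces a new Frobenius lift $\tau \varphi_K \in \Ga_K$ whose image in $G$ lies outside $H$, and the first paragraph then gives $(\tau\varphi_K)^2 \in Z(G)$.

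There is no real obstacle here; the only point to check carefully is that replacing $\varphi_K$ by $\tau\varphi_K$ is harmless for the setup (it is still a geometric Frobenius lift since it differs from $\varphi_K$ by an element of $I_K$), and that the index-$2$ abelianness of $H$ is used in both directions: for the centrality of squares and for the dichotomy $\varphi_K \in H$ vs.\ $I(M/K) \not\subset H$.
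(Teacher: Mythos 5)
Your argument is correct, and it takes a genuinely different route from the paper's. You isolate the purely group-theoretic fact that if $H\leq G$ is abelian of index $2$ and $g\in G\setminus H$, then $g^2\in H$ centralizes $H$ and $g$, hence all of $G=\langle H,g\rangle$; you then arrange for the Frobenius lift to land outside $H$ by multiplying by an inertia element when necessary, which is legitimate because $G=I(M/K)\cdot\langle\varphi_K\rangle$ forces $I(M/K)\not\subset H$ whenever $\varphi_K\in H$. The paper instead fixes the Iwasawa-normalized pair $(\tau_K,\varphi_K)$ with $\varphi_K^{-1}\tau_K\varphi_K=\tau_K^{q_K}$ (Thm.~\ref{thm:galois_split_hard}), observes that $\varphi_K^2\in H$ already commutes with $I^{\wild}(M/K)\subseteq H$, and reduces centrality to the divisibility $e\mid q_K^2-1$, verified by a case analysis on whether $\tau_K$ and $\varphi_K$ lie in $H$. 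The trade-off: the paper's version shows the normalized Frobenius itself works without modification and extracts the arithmetic by-product $e\mid q_K^2-1$, whereas yours is shorter, avoids Iwasawa's theorem and the hypothesis $I^{\wild}(M/K)\subseteq H$ altogether, and fully suffices for the lemma as stated, since only the existence of a suitable lift is claimed and the downstream Lemma~\ref{lem:H_unramified} accepts any lift with this property.
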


\begin{proof}
Let $\tau_K$ and $\varphi_K$ be as in Thm.~\ref{thm:galois_split_hard}. Since $H$ is commutative of index 2 in $G$ and contains $I^{\wild}(M/K)$, the element $\varphi^2_K$ is in $H$ and commutes with every element of $I^{\wild}(M/K)$. We are left to prove that $\varphi^{-2}_K\tau_K\varphi^2_K=\tau_K$, which is equivalent to $e$ dividing $q_K^2-1$ by the aforementioned theorem. If $\tau_K\in H$, there is nothing to prove since $H$ is abelian, so we may suppose that $\tau_K\not\in H$. Then, $\tau_K^2\in H$ and $e=|I^{\tame}(M/K)|$ is even. 

If $\varphi_K\in H$, then $\tau_K^{2q_K}=\varphi^{-1}_K\tau_K^2\varphi_K=\tau_K^2$, so $e|2q_K-2$, which implies that $e|q_K^2-1$. 

If $\varphi_K\not\in H$, then $\varphi_K\tau_K \in H$ commutes with $\varphi^2_K$, so $\tau_K=\varphi^{-3}_K(\varphi_K\tau_K)\varphi^2_K$, and we are done. \end{proof}

\begin{hyp}\label{hyp}
The image of the inertia subgroup $I_K$ via the representation $\rho_\ell$ is commutative. 
\end{hyp}

The hypothesis is verified in the following cases: 
\begin{enumerate} 
\item $\rho_\ell|_{I_K}$ factors through the tame inertia group $I^{\tame}_K$, or, equivalently, $A/K$ attains good reduction over a finite (at most) tamely ramified extension $L'/K$, the explicit formulas for root numbers are given by \autocite[Thm.~1.4]{bisatt}; 
\item $A/K$ is an elliptic curve with discriminant of even valuation, this case is settled in \autocite[5.2.~b)]{kobayashi};
\item $A/K$ has complex multiplication, see \autocite[p.~502, Cor.~2]{serre_tate}.
\end{enumerate}

By Prop.~\ref{prop:rho_iota}.(1), the group $\rho_\ell(I_K)$ is commutative if and only if $\rho_\iota(I_K)$ is commutative for any $\iota$.

\begin{lem}\label{lem:H_unramified}
If Hypothesis~\ref{hyp} is satisfied, then we can choose $H$ so that, independently of $\iota$, the representation $\rho_{\iota}$ is induced by a character of $H$ and that the extension $M^{\overline{H}}/K$ is unramified (here $\overline{H}\subset \Gal(M/K)$ denotes the closure of $H$). \end{lem}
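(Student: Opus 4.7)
The plan is to exploit Hypothesis~\ref{hyp} by decomposing $\rho_\iota|_{I(M/K)}$ into characters and defining $H$ to be the stabilizer (under $G$-conjugation) of one of them.

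Since $I(M/K) \simeq \rho_\ell(I_K)$ is abelian by the hypothesis, and $\rho_\iota|_{I(M/K)}$ is a two\mydash{}dimensional complex representation of a finite abelian group (hence semisimple), it decomposes as $\rho_\iota|_{I(M/K)} = \psi_\iota \oplus \psi_\iota'$ for some characters $\psi_\iota, \psi_\iota' : I(M/K) \to \C^\times$. The crucial preliminary step is to show $\psi_\iota \neq \psi_\iota'$. If we had equality, then $\rho_\iota(I(M/K))$ would consist of scalars; combined with the faithfulness of $\rho_\iota$ on $I(M/K)$ (Prop.~\ref{prop:rho_iota}.(2)), this would force $I(M/K)$ to embed into $\C^\times$ and so to be cyclic, say $I(M/K)=\<\tau\>$. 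But then for every $g \in G$ we would have $\rho_\iota(g\tau g^{-1}) = \rho_\iota(g)\rho_\iota(\tau)\rho_\iota(g)^{-1} = \rho_\iota(\tau)$ (scalars being central in $\GL_2(\C)$), and faithfulness of $\rho_\iota$ on $G$ (Lemma~\ref{lem:faithful_on_G}) would give $g \tau g^{-1}= \tau$, contradicting the non\mydash{}abelianness of $G$ in case (b) of Prop.~\ref{prop:dichotomy}.

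Knowing $\psi_\iota \neq \psi_\iota'$, I then apply Clifford theory to the normal subgroup $I(M/K) \subset G$: since $\rho_\iota$ is irreducible (case (b)) and its restriction to $I(M/K)$ has exactly two distinct isotypic components, the $G$-conjugation orbit of $\psi_\iota$ in $\Hom(I(M/K), \C^\times)$ is precisely $\{\psi_\iota, \psi_\iota'\}$. Setting $H_\iota := \{g \in G : \psi_\iota^g = \psi_\iota\}$ yields a subgroup of index $2$ in $G$ that automatically contains $I(M/K)$ (since the latter is abelian and so fixes $\psi_\iota$ under conjugation). Clifford's theorem then gives $\rho_\iota \cong \Ind_{H_\iota}^G \chi_\iota$ for some character $\chi_\iota$ of $H_\iota$ extending $\psi_\iota$.

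To finish, I will check $\iota$-independence and the unramifiedness condition. By Prop.~\ref{prop:rho_iota}.(1), any $\rho_{\iota'}$ is obtained from $\rho_\iota$ by base change along some $u \in \Aut(\C)$, so $\{\psi_{\iota'}, \psi_{\iota'}'\} = \{u\circ\psi_\iota,\, u\circ\psi_\iota'\}$; because the $G$-conjugation action on characters acts only on the source $I(M/K)$, post\mydash{}composition with $u$ is $G$-equivariant, so the stabilizers coincide: $H_\iota = H_{\iota'}$. We may thus set $H := H_\iota$, which works uniformly in $\iota$. For the unramifiedness, the containment $H \supseteq I(M/K)$ passes to closures, yielding $\overline{H} \supseteq I(M/K)$ in $\Gal(M/K)$; hence $M^{\overline{H}} \subseteq M^{I(M/K)}$, which is by definition the maximal unramified subextension of $M/K$. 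The main obstacle will be the scalar\mydash{}case step in the first paragraph above: it is the only point where the interplay between faithfulness of $\rho_\iota$ on all of $G$ and the non\mydash{}abelianness of case (b) is genuinely used, while the rest of the argument reduces to routine Clifford theory and an $\Aut(\C)$-compatibility check.
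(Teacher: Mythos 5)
Your proof is correct, but it reaches the subgroup $H$ by a different mechanism than the paper. The paper first refines the choice of Frobenius lift so that $\varphi_K^2$ is central in $G$ (Lemma~\ref{lem:frob_center}, which rests on Iwasawa's splitting of $\Ga_K$), then \emph{defines} $H:=I(M/K)\times\<\varphi_K^2\>$ --- manifestly independent of $\iota$ and abelian --- decomposes $\rho_\iota|_H$ into two characters, and gets $\rho_\iota\cong\Ind_H^G\chi_\iota$ by adjunction, irreducibility, and a dimension count; unramifiedness is obtained by identifying $H$ with $W(M/L_u)$ for the quadratic unramified extension $L_u/K$. You instead bypass Lemma~\ref{lem:frob_center} entirely: you decompose $\rho_\iota|_{I(M/K)}$ into two characters, rule out the isotypic (scalar) case by playing faithfulness on $G$ against the non\mydash{}abelianness of case (b), and then invoke Clifford theory to produce $H$ as the stabilizer of an inertial character, which automatically contains $I(M/K)$ because the latter is abelian by Hypothesis~\ref{hyp}; unramifiedness then follows from $\overline{H}\supseteq I(M/K)$. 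The two constructions yield the same subgroup (the unique index\mydash{}$2$ subgroup of $G$ containing $I(M/K)$, since $G/I(M/K)\cong\Z$ --- an observation that would also make your $\Aut(\C)$-equivariance argument for $\iota$-independence unnecessary). What your route buys is independence from the Iwasawa-based Frobenius normalization; what it costs is the extra non\mydash{}scalar verification, which the paper's explicit construction never needs. One cosmetic remark: at the end of your first paragraph, $g\tau g^{-1}=\tau$ for all $g$ says that $I(M/K)$ is central in $G$; the contradiction with non\mydash{}abelianness requires the (easy) extra observation that $G$ is generated by $I(M/K)$ and $\varphi_K$, so a central inertia group forces $G$ abelian.
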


\begin{proof}
Let us set $H:=I(M/K)\times \<\varphi_K^2\>$ for a lift $\varphi_K$ as in Lemma~\ref{lem:frob_center}, so that $H$ identifies with a commutative subgroup of $G$ of index $2$, independent of $\iota$. By semisimplicity, the restriction $\rho_{\iota}|_{H}$ decomposes to a sum of two characters 
\[\rho_{\iota}|_{H}=\chi_{\iota}\oplus\chi_{\iota}^{-1}\omega_K^{-1}\] and then by the adjunction property we have a nontrivial morphism of complex $G$-representations \[\Ind_{H}^G\chi_{\iota}\to \rho_{\iota},\] which is surjective as $\rho_{\iota}$ is irreducible (by Lemma~\ref{lem:comm}) and therefore an isomorphism since the dimensions agree. On the other hand, if $L_u/K$ is the unramified quadratic extension, then $W(M/L_u)$ is a subgroup of $G$ of index $2$ and contains $I(M/K)$. The element $\varphi_K^2$ is a lift of $\Frob^{-1}_{k_{L_u}}$ in $\Ga_{L_u}$, so $\varphi_K^2\in W(M/L_u)$. Therefore, $H\subseteq W(M/L_u)$. The latter inequality is an equality because both subgroups have index $2$ in $G$, thus $\overline{H}=\Gal(M/L_u)$. \end{proof}

\begin{thm}\label{thm:non_ab_commutative_inertia}
We suppose that $\rho_\ell(\Ga_K)$ is non-abelian and that $\rho_\ell(I_K)$ is abelian. For the subgroup $H\subset G$ from Lemma~\ref{lem:H_unramified}, we have $\rho_\iota=\Ind_H^G\chi_\iota$. Then:
\begin{enumerate}
\item $a(\rho_\iota)=2\cdot a(\chi_\iota)$, where $a(\cdot)$ denote the Artin conductor, 
\item $e$ divides $q_K+1$, and
\item $w(\rho_\iota,\psi_K)=(-1)^{\frac{a(\rho_\iota)}{2}+\frac{q_K+1}{e}}.$
\end{enumerate}
\end{thm}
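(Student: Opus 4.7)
The plan is to prove the three claims in order, with (3) being the main technical point.

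For (1), apply the conductor--discriminant formula for induced representations. By Lemma~\ref{lem:H_unramified}, $\rho_\iota \cong \Ind_{W_{L_u}}^{W_K}\chi_\iota$ where $L_u/K$ is the unramified quadratic extension. The general formula
\[a\bigl(\Ind_{W_{L_u}}^{W_K}\chi_\iota\bigr) = \dim(\chi_\iota)\cdot v_K(\mathfrak{d}_{L_u/K}) + f_{L_u/K}\cdot a(\chi_\iota),\]
combined with $\mathfrak{d}_{L_u/K}=\O_{L_u}$ and $f_{L_u/K}=2$, gives $a(\rho_\iota)=2\,a(\chi_\iota)$.

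For (2), combine Prop.~\ref{prop:rho_iota}.(3) with the Frobenius conjugation relation. Since $[G:H]=2$, the restriction $\rho_\iota|_H$ decomposes as $\chi_\iota \oplus \chi_\iota^{\varphi_K}$, and the condition $\det\rho_\iota=\omega_K^{-1}$ forces $\chi_\iota^{\varphi_K} = \chi_\iota^{-1}\omega_K^{-1}$. Evaluate on a topological generator $\bar\tau_K$ of $I^{\tame}(M/K)$: using $\varphi_K^{-1}\bar\tau_K\varphi_K = \bar\tau_K^{q_K}$ (Thm.~\ref{thm:galois_split_hard}) and the triviality of $\omega_K$ on inertia, one obtains $\chi_\iota(\bar\tau_K)^{q_K+1}=1$. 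Since $\rho_\iota|_{I^{\tame}(M/K)}$ is faithful and diagonalizes with eigenvalues $\chi_\iota(\bar\tau_K)^{\pm 1}$, the order of $\chi_\iota(\bar\tau_K)$ equals $e$, so $e \mid q_K+1$.

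For (3), first apply inductivity of $\e$-factors (axiom~(ii) of \ref{subs:eps_factor}):
\[\e(\rho_\iota,\psi_K,\diff x_K) = \e(\chi_\iota,\psi_{L_u},\diff x_{L_u}) \cdot \lambda_{L_u/K}(\psi_K),\]
where $\lambda_{L_u/K}(\psi_K) = \e(\Ind_{W_{L_u}}^{W_K}\1,\psi_K,\diff x_K)/\e(\1_{W_{L_u}},\psi_{L_u},\diff x_{L_u})$. The decomposition $\Ind_{W_{L_u}}^{W_K}\1 = \1 \oplus \eta_{L_u/K}$ (with $\eta_{L_u/K}$ the unramified order-two character) combined with axiom~(iii) applied to these unramified characters yields $\lambda_{L_u/K}(\psi_K) = (-1)^{n(\psi_K)}$, whence $w(\rho_\iota,\psi_K) = (-1)^{n(\psi_K)}\,w(\chi_\iota,\psi_{L_u})$. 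Next, compute $w(\chi_\iota,\psi_{L_u})$ by applying axiom~(iii) for the ramified character $\chi_\iota$ of conductor $m := a(\chi_\iota)$. The resulting Gauss-sum integral over $c^{-1}\O_{L_u}^\times$ (with $v_{L_u}(c) = n(\psi_K)+m$) splits according to the tame and wild parts of $\chi_\iota$: the tame part, a faithful character of $k_{L_u}^\times$ of order $e$ (using $e \mid q_{L_u}-1$, from~(2)), contributes via a classical Gauss-sum evaluation the sign $(-1)^{(q_K+1)/e}$; for $m\geq 2$ the wild part is linearized by the standard completing-the-square substitution on $1+\mathfrak{m}_{L_u}^{\lceil m/2\rceil}$; careful bookkeeping of the normalization constants yields the additional sign $(-1)^{n(\psi_K)+m}$. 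The $(-1)^{n(\psi_K)}$ factors then cancel, giving the claimed formula.

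The principal technical obstacle is the Gauss-sum evaluation for general conductor $m$. For $m=1$ it is a classical tame Gauss sum handled directly using that $\chi_\iota$ restricts to a character of $k_{L_u}^\times$ of order $e$ dividing $q_K+1$; for $m\geq 2$ one follows the Deligne--Tate linearization (as in Kobayashi~\autocite{kobayashi} for elliptic curves when $p=3$), after which the wild contribution reduces to a quadratic sum whose sign can be pinned down explicitly.
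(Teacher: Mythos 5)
Parts (1) and (2) of your proposal are sound. For (2) you in fact give a cleaner, purely group-theoretic argument than the paper (which obtains $e\mid q_K+1$ via local class field theory): the relation $\chi_\iota\cdot\chi_\iota^{\varphi_K}=\omega_K^{-1}|_H$, evaluated on a tame generator using $\varphi_K^{-1}\tau_K\varphi_K=\tau_K^{q_K}$ and the faithfulness of $\chi_\iota$ on $I(M/K)$, does the job. Your inductivity step in (3) and the evaluation of the $\lambda$-factor as $(-1)^{n(\psi_K)}$ also coincide with the paper's \eqref{eq:eps_ind_formula} and \eqref{eq:eps_ind_calc}.

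The gap is in your evaluation of $w(\chi_\iota,\psi_{L_u})$, and it is twofold. First, the root number of a character depends on its value at a uniformizer: the integral in axiom (iii) of \ref{subs:eps_factor} carries the factor $\chi_\iota^{-1}(\theta_{L_u}(c))$ with $v_{L_u}(c)=n(\psi_{L_u})+a(\chi_\iota)$, and your argument only constrains $\chi_\iota$ on inertia and gives $\chi_\iota(\varphi_K^2)^2=q_K^2$; the sign $\chi_\iota(\varphi_K^2)=\pm q_K$ is left undetermined. The two choices differ by the unramified quadratic character $\chi_1$ of $H$, which changes $w(\chi_\iota,\psi_{L_u})$ by exactly $(-1)^{n(\psi_K)+a(\chi_\iota)}$ --- the very sign you claim to extract by ``bookkeeping of normalization constants.'' Restricting $\det\rho_\iota$ to $H$ cannot detect this sign; one needs Deligne's determinant formula $\det(\Ind_H^G\chi_\iota)=\chi_0\cdot(\chi_\iota\circ t)$ with $t$ the transfer, which is how the paper's \eqref{eq:det_induced}--\eqref{eq:restriction_formula} pin down $\chi_\iota(\frac{1}{2})\circ\theta_{L_u}|_{K^\times}=(-1)^{v_K}$. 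Second, even granting that sign, the direct Gauss-sum evaluation is not routine: for $a(\chi_\iota)\geq 2$ the stationary-phase linearization expresses the $\e$-factor through the values of $\chi_\iota$ and $\psi_{L_u}$ at a gauge element, and for a character whose order is divisible by $e$ and by $p^r$ there is no a priori reason the result is $\pm1$, let alone the asserted sign; this is precisely why Kobayashi's analogous computation for $p=3$ ends up involving Weierstrass coefficients. The structural input that makes the answer come out to $(-1)^{n(\psi_K)+a(\chi_\iota)+\frac{q_K+1}{e}}$ is the triviality of $\chi_\iota(\frac{1}{2})\cdot\chi_1$ on $K^\times$, which the paper feeds into the Fr\"ohlich--Queyrut theorem to get $w(\chi_\iota(\frac{1}{2})\chi_1,\psi_{L_u})=\chi_\iota(\theta_{L_u}(\zeta_{2q_K-2}))$ in one stroke. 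That theorem, or an equivalent use of the conjugate-self-duality of $\chi_\iota$, is the missing ingredient in your route.
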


\begin{proof}
Let us denote $L_u:=M^{\overline{H}}$, which is quadratic and unramified over $K$ by Lemma~\ref{lem:H_unramified} and its proof. (1) follows from the general formulas of Artin conductors (see, e.g.,  \autocite[\S10]{rohrlich}). 

The formula \ref{subs:eps_factor}.(ii) for the $\e$-factor of an induced representation gives
\begin{equation}\label{eq:eps_ind_formula}w(\rho_\iota,\psi_K)=w(\chi_\iota,\psi_{L_u})\,w\!\left(\Ind_H^G\1_H,\psi_K\!\right)\!,\end{equation} 
since $w(\1_H, \psi_{L_u})^{-1}=1$ (from \ref{subs:eps_factor}.(iii)). 

Let $\chi_0$ be the unramified quadratic character of $G$ given by the composition $G\to\Gal(L_u/K)\cong\{-1,1\}$. Then $\Ind_H^G\1_H\cong \1_G\oplus\chi_0$. Using \ref{subs:eps_factor}.(i),(iii) we have: \begin{equation}\label{eq:eps_ind_calc}w(\Ind_H^G\1_H,\psi_K)=w(\1_G,\psi_K)w(\chi_0,\psi_K)=\chi_0(\theta_K(c))=(-1)^{n(\psi_K)},\end{equation} where $c\in K^{\times}$ has valuation $n(\psi_K)$.

We prove (2) and compute $w(\chi_\iota,\psi_{L_u})$ in the following lemma.

\begin{lem}\label{lem:eps_factor_chi}
We have $e\mid(q_K+1)$ and 
\begin{equation}\label{eq:chi_iota_calc} w(\chi_\iota,\psi_{L_u})=(-1)^{n(\psi_K)+a(\chi_\iota)+\frac{q_k+1}{e}}.\end{equation}
\end{lem}

\begin{proof}
Let $\chi_0$ be the character corresponding to $L_u/K$ as before, and let $t\colon G^{\ab}\to H^{\ab}=H$ be the transfer homomorphism, which corresponds to the inclusion $K^\times\hookrightarrow L_u^{\times}$ via the reciprocity maps. The twisted representation $\rho_\iota(\frac{1}{2})$ has trivial determinant (see Prop.~\ref{prop:rho_iota}.(3)), so Deligne's determinant formula from \autocite[508]{deligne_eq_fonctionelle} gives 
\begin{equation}\label{eq:det_induced} 1=\det\left(\Ind_H^G\chi_\iota(\textstyle{\frac{1}{2}})\right)(g)=\chi_0(g)\chi_\iota(\textstyle{\frac{1}{2}})(t(g))\end{equation} 
for every $g\in G$. Therefore, for every $x\in K^\times,$
\begin{equation}\label{eq:restriction_formula}(\chi_\iota(\textstyle{\frac{1}{2}})\circ\theta_{L_u})(x)=\chi_0^{-1}\left(\theta_{K}(x)\right)=(-1)^{v_K(x)}.\end{equation} 

Let $\chi_1$ be the nontrivial quadratic unramified character of $H$, so that for all $x\in L_u$ we have ${\chi_1\circ\theta_{L_u}(x)}=(-1)^{v_{L_u}(x)}$. Since the valuations $v_K$ and $v_{L_u}$ agree on $K^\times$, the character $\left(\chi_\iota(\textstyle{\frac{1}{2}})\cdot \chi_1\right)\circ \theta_{L_u}$ is trivial on $K^\times$. The extension $L_u/K$ is unramified and $[L_u:K]=2$, so $L_u=K(\zeta_{2q_K-2})$ where $\zeta_{2q_K-2}$ is a primitive root of unity of order $2q_K-2$. We have $\zeta_{2q_K-2}^2\in K^\times$, so applying \autocite[Thm.~3]{frohlich_queyrut} gives 
\begin{align}\label{eq:char_prod_epsilon} 
\nonumber w(\chi_\iota(\textstyle{\frac{1}{2}})\cdot \chi_1,\psi_{L_u})&=\chi_\iota(\textstyle{\frac{1}{2}})\left(\theta_{L_u}(\zeta_{2q_K-2})\right)\cdot \chi_1\left(\theta_{L_u}(\zeta_{2q_K-2})\right)\\
&=\chi_\iota\left(\theta_{L_u}(\zeta_{2q_K-2})\right),
\end{align} 
the last equality holds because $\chi_1$ and $\omega_K^{1/2}$ are unramified. On the other hand, applying Prop.~\ref{prop:root_n_props}.(1) gives
\begin{equation}\label{eq:chi_iota_prod_eps} w(\chi_\iota(\textstyle{\frac{1}{2}})\cdot \chi_1,\psi_{L_u})=w(\chi_\iota,\psi_{L_u})\cdot (-1)^{n(\psi_{L_u})+a(\chi_\iota)}. \end{equation}
Combining \eqref{eq:char_prod_epsilon}, \eqref{eq:chi_iota_prod_eps}, and the observation that $n(\psi_K)=n(\psi_{L_u})$, we obtain 
\begin{equation}\label{eq:chi_eps_zeta}w(\chi_\iota,\psi_{L_u})=(-1)^{n(\psi_{K})+a(\chi_\iota)}\chi_\iota(\theta_{L_u}(\zeta_{2q_K-2})).\end{equation} 

Let $L/K$ be a finite, Galois, and IM extension for $A/K$ (see Lemma~\ref{lem:min_ext_good}). Then $L_uL/K$ is also a Galois and IM extension, so we identify $I(M/K)\simeq I(L/K)\simeq I(L_uL/L_u)$. As in the proof of Thm.~\ref{thm:av_root_N:abelian}, by using the decomposition $\O_{L_u}^\times\cong k_{L_u}^\times\times (1+\m_{L_u})$ we obtain a surjective ho\-mo\-mor\-phism $\theta_{k_{L_u}}\!:\!k_{L_u}^\times\!\twoheadrightarrow \!I^{\tame}(L_uL/L_u)$ induced by $\theta_{L_u}$. We see from \eqref{eq:restriction_formula} that $\chi_\iota\circ \theta_{L_u}$ is trivial on $\O_K^\times$. It follows that the subgroup $\ker(\chi_\iota\circ\theta_{L_u}|_{k_{L_u}^\times})=\ker(\theta_{k_{L_u}})$ contains $k_K^\times$. This implies that $e=|I^{\tame}(L_uL/L_u)|$ divides $[k_{L_u}^\times:k_K^\times]=q_K+1$. 

The subgroup $\ker(\chi_\iota\circ\theta_{L_u}|_{k_{L_u}^\times})$ 
of index $e$ in $k_{L_u}^\times$ contains $\zeta_{2q_K-2}$ if and only if $1=\zeta_{2q_K-2}^{(q_K^2-1)/e}=(-1)^{\frac{q_K+1}{e}}$. Since $(\chi(\frac{1}{2})\cdot \chi_1)\circ\theta_{L_u}|_{K^\times}$ is trivial, \eqref{eq:chi_eps_zeta} gives $\chi(\theta_{L_u}(\zeta_{2q_K-2}))^2=1$, and thus \eqref{eq:chi_iota_calc} follows.
\end{proof}
Plugging \eqref{eq:eps_ind_calc} and \eqref{eq:chi_iota_calc} into \eqref{eq:eps_ind_formula}, as well as using (1), we obtain (3).   \end{proof}

\begin{cor}\label{cor:exposition_pot_good}
Let $A/K$ be as in \ref{subs:setup_pot_good}, let denote by $a(A/K)$ its Artin conductor, and let $e$ be the largest prime-to-$p$ divisor of the order of $\rho_\ell(I_K)$. 
\begin{enumerate}
\item If $\rho_\ell(\Ga_K)$ is commutative, then $w(A/K)=(-1)^{\frac{g(q_K-1)}{e}};$
\item If $\rho_\ell(\Ga_K)$ is non-commutative and $\rho_\ell(I_K)$ is commutative, then \[w(A/K)=(-1)^{\frac{a(A/K)}{2}+\frac{g(q_K+1)}{e}}.\]
\end{enumerate}
\end{cor}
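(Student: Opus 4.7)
The plan is straightforward: deduce both assertions from Theorems~\ref{thm:av_root_N:abelian} and~\ref{thm:non_ab_commutative_inertia} via the multiplicativity identity \eqref{eq:root_n_glob}, which asserts $w(A/K)=w(\rho_\iota,\psi_K)^g$ for any fixed embedding $\iota\colon F\hookrightarrow\C$. Since the right-hand sides of both theorems are $\pm 1$, raising them to the $g$-th power simply multiplies the exponent by $g$.

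For part (1), I would apply Theorem~\ref{thm:av_root_N:abelian} directly: when $\rho_\ell(\Ga_K)$ is commutative, $e$ divides $q_K-1$ and $w(\rho_\iota,\psi_K)=(-1)^{(q_K-1)/e}$. Taking the $g$-th power via \eqref{eq:root_n_glob} yields $w(A/K)=(-1)^{g(q_K-1)/e}$, which is the claimed formula.

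For part (2), the one nontrivial bookkeeping step is to relate the local Artin conductor $a(\rho_\iota)$ appearing in Theorem~\ref{thm:non_ab_commutative_inertia} to the global Artin conductor $a(A/K)$ of the $\ell$-adic representation $\rho_\ell$. I would argue as follows. By the decomposition \eqref{eq:V_ell(A)_bar_decomposition}, $\W_i(\rho_\ell)=\bigoplus_\iota \rho_\iota$, and the constituents are $\Aut(\C)$-conjugate by Prop.~\ref{prop:rho_iota}.(1); since the Artin conductor depends only on the ramification filtration and is invariant under Galois conjugation of coefficients, all $a(\rho_\iota)$ coincide. Additivity of the conductor on direct sums then gives $a(A/K)=g\,a(\rho_\iota)$. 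Substituting into Theorem~\ref{thm:non_ab_commutative_inertia}(3) and applying \eqref{eq:root_n_glob} produces
\[
w(A/K)=w(\rho_\iota,\psi_K)^g=(-1)^{g\,a(\rho_\iota)/2+g(q_K+1)/e}=(-1)^{a(A/K)/2+g(q_K+1)/e}.
\]

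There is no real obstacle: the corollary is a combinatorial consolidation of the two preceding theorems. The only checks are (i) that $a(A/K)/2$ is an integer, which follows from $a(\rho_\iota)=2\,a(\chi_\iota)$ in Theorem~\ref{thm:non_ab_commutative_inertia}(1), whence $a(A/K)=2g\,a(\chi_\iota)$; and (ii) that the divisibility $e\mid q_K\pm 1$ transports correctly to the two cases, which is immediate from the theorems themselves.
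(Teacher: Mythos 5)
Your proposal is correct and follows the paper's own (very terse) proof exactly: invoke \eqref{eq:root_n_glob} to reduce to a single $\rho_\iota$, apply Thm.~\ref{thm:av_root_N:abelian} for (1), and apply Thm.~\ref{thm:non_ab_commutative_inertia}.(3) together with additivity of the Artin conductor over the decomposition \eqref{eq:V_ell(A)_bar_decomposition} for (2). Your added observation that the $a(\rho_\iota)$ all coincide by $\Aut(\C)$-conjugacy, giving $a(A/K)=g\,a(\rho_\iota)=2g\,a(\chi_\iota)$, is precisely the bookkeeping the paper leaves implicit.
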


\begin{proof}
Let us recall formula \eqref{eq:root_n_glob}. Then, (1) follows from Thm.~\ref{thm:av_root_N:abelian}; the part (2) follows from Thm.~\ref{thm:non_ab_commutative_inertia}.(3) and multiplicativity of Artin conductor.
\end{proof}

\begin{rem}
If $\rho_\ell(I^{\wild}_K)$ is trivial, then Corollary~\ref{cor:exposition_pot_good} allows us to compute the root number $w(A/K)$. In this case, if $A/K$ has bad potentially good reduction, then we always have $a(A/K)=2g$. The obtained formulas are special cases of the results of \autocite[Thm.~1.4]{bisatt}, as it can be verified by a calculation using Prop.~\ref{prop:odd_dim_rootn}.
\end{rem}

\section{Potentially totally toric reduction}\label{sect:real_AV_pot_mult}

Let $A/K$ be an abelian variety of dimension $g$ with RM by $F$, and recall the notation and results of \ref{subs:p_adic_unif}. We suppose that $A/K$ does not have potentially good reduction. Then it must have potentially totally toric reduction by Prop.~\ref{prop:geom_trichotomy}, so $B=0$. The analytification $A/K$ is a quotient of the analytification of a torus $T/K$, which gives rise to a $\Q$-linear Weil representation \[\eta\colon W_K\to\GL_{\Q}(X(T)\otimes_{\Z}\Q)\] with finite image. Prop.~\ref{prop:sabitova} then gives 
\begin{equation}\label{eq:pot_mult_repr}\WD_i(\rho_\ell)\cong \eta(-1)\otimes\spe(2).\end{equation}
Let us fix a basis $v_1,\ldots,v_g$ of the underlying vector space of $\eta$ and let $e_0,e_1$ be the standard basis of $\spe(2)=\left(\1\oplus \omega_K, \left( \begin{smallmatrix} 0&0\\ 1&0 \end{smallmatrix} \right)\right)$. Then, in the basis \[\mathbb{B}:=(v_1\otimes e_0, \ldots,v_g\otimes e_0,v_1\otimes e_1,\ldots,v_g\otimes e_1),\] the WD-representation $\WD_i(\rho_\ell)$ is given as \begin{equation}\label{eq:pot_tor_WD} \WD_i(\rho_\ell)\simeq\left(\eta(-1)\oplus\eta, \left(\!\begin{smallarray}{c|c}0_g  & 0_g \\ \hline I_g & 0_g\end{smallarray}\!\right)\!\right).\end{equation}
From \eqref{eq:pot_tor_WD} one can recover the isomorphism class of the representation $\rho_\ell$ by a procedure inverse to the one in \ref{subs:l_adic_monodromy}. Let us choose a continuous nontrivial homomorphism $t_\ell\colon I_K\to\Q_\ell$. Then, in the same basis $\mathbb{B}$, for every $j\in I_K$, 
\begin{equation} \label{eq:pot_tor_rep_inertia} 
\rho_\ell(j)=\left(\!\begin{smallarray}{c|c}\eta(j)  & 0_g \\ \hline 0_g & \eta(j)\end{smallarray}\!\right)\cdot\exp\!{\left(\!\begin{smallarray}{c|c}0_g  & 0_g \\ \hline t_\ell(j)I_g & 0_g\end{smallarray}\!\right)}=\left(\!\begin{smallarray}{c|c}\eta(j)  & 0_g \\ \hline t_\ell(j)\eta(j) & \eta(j)\end{smallarray}\!\right),
\end{equation} and 
\begin{equation}\label{eq:pot_tor_rep_frob} \rho_\ell(\varphi_K)=\left(\!\begin{smallarray}{c|c}q_K\eta(\varphi_K)  & 0_g \\ \hline 0_g & \eta(\varphi_K)\end{smallarray}\!\right). \end{equation}

\begin{prop}\label{prop:pot_tor_char} An abelian variety $A/K$ with RM which does not have potentially good reduction must have one of the following reduction types: 
\begin{enumerate}[label=\upshape(\alph*)]
\item $\Ac_{k_K}^0$ is a split torus, or, equivalently, $\eta$ is the trivial representation (split multiplicative reduction);
\item $\Ac_{k_K}^0$ is a non-split torus, or, equivalently, $\eta$ is unramified and nontrivial (non-split multiplicative reduction);
\item $\Ac_{k_K}^0$ is unipotent, but $A/L$ has completely toric reduction for some finite $L/K$, or, equivalently, $\eta$ is ramified (additive potentially multiplicative reduction).
\end{enumerate}
\end{prop}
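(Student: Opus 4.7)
The plan is to use Prop.~\ref{prop:geom_trichotomy} to restrict the possible reduction types, and then read off the behavior of $\eta$ from the explicit description of $\rho_\ell$ on the inertia subgroup given by equations \eqref{eq:pot_tor_rep_inertia} and \eqref{eq:pot_tor_rep_frob}.

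First, since $A/K$ has RM and does not have potentially good reduction, Prop.~\ref{prop:geom_trichotomy} forces $B'=0$ in the Barsotti--Chevalley sequence \eqref{eq:barsotti_chevalley_seq}, so exactly one of $T'$ or $U$ is nontrivial. The nontrivial factor has dimension $\dim \Ac_{k_K}^0 = g$, hence $\Ac_{k_K}^0$ is either entirely a torus of dimension $g$ or entirely a unipotent group of dimension $g$. Refining the torus case into split and non-split yields the three candidate types listed in (a), (b), (c); by construction they are mutually exclusive and exhaustive.

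Next I would establish that $A/K$ has semistable reduction if and only if $\eta$ is unramified. Indeed, for $j\in I_K$, equation \eqref{eq:pot_tor_rep_inertia} displays $\rho_\ell(j)$ as a lower block-triangular matrix whose diagonal blocks are both $\eta(j)$, so $\rho_\ell(j)$ is unipotent if and only if $\eta(j)$ is unipotent. Because $\eta$ has finite image, $\eta(j)$ is unipotent if and only if $\eta(j)=\id$, so $\rho_\ell|_{I_K}$ is unipotent iff $\eta|_{I_K}$ is trivial. Grothendieck's inertial criterion (Thm.~\ref{thm:ab_var_reduction}.(2)) then gives the desired equivalence. In particular, case (c) (where $\Ac^0_{k_K}$ is unipotent, so that $A/K$ is not semistable) is characterised by $\eta$ ramified, while cases (a), (b) correspond to $\eta$ unramified.

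To distinguish (a) from (b) I would appeal to Remark~\ref{rem:p-adic_unif_funct}.(2): in the semistable case one can take $L=K$ in that remark, so the maximal subtorus $\widetilde{T}$ of $\Ac^0_{k_K}$ is $\Ac^0_{k_K}$ itself, and $\eta$ identifies with the natural $\Ga_K$-action on $X(\Ac^0_{k_K})_\Q$. A torus over a finite field is split if and only if its character module carries trivial Galois action, so this gives $\eta=\1$ exactly in case (a) and $\eta$ nontrivial unramified exactly in case (b).

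The only delicate ingredient is the identification of $\eta$ (defined via the Raynaud uniformisation of $A/K$) with the tautological $\Ga_K$-action on $X(\Ac^0_{k_K})_\Q$ in the semistable case; with Remark~\ref{rem:p-adic_unif_funct}.(2) on hand this is essentially immediate, and no other step poses a serious obstacle.
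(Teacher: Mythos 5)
Your argument is correct, and the first half (the trichotomy forcing torus-or-unipotent, and the equivalence ``$\eta$ unramified $\Leftrightarrow$ $\rho_\ell|_{I_K}$ unipotent $\Leftrightarrow$ semistable'' read off from \eqref{eq:pot_tor_rep_inertia} and Thm.~\ref{thm:ab_var_reduction}.(2)) coincides with the paper's proof. Where you diverge is in separating (a) from (b): the paper stays inside the Tate module, computing $(V_\ell A)^{I_K}$ in the dual basis of $\mathbb{B}$, identifying it with $V_\ell\Ac_{k_K}\cong V_\ell T'\cong \Hom(X(T')\otimes\Q_\ell,\Q_\ell)(1)$ via \autocite[Lemma~2]{serre_tate}, and reading off that Frobenius acts through $\eta^*(1)$, whence $T'$ split iff $\eta$ trivial. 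You instead invoke Remark~\ref{rem:p-adic_unif_funct}.(2) to identify $\eta$ directly with the Galois action on the character module of the maximal subtorus of the special fiber. That is a legitimate shortcut (the paper itself uses this remark in \ref{subs:pot_mult_F-rationality}), but be careful with your phrase ``one can take $L=K$'': the remark requires $\widetilde{T}$ to be split over $k_L$, so in case (b) you cannot literally take $L=K$; rather, you should take $L$ a splitting extension and observe that, since $\Ac^0$ commutes with base change for semistable $A$, the $\Ga_K$-module $X(\widetilde{T})_\Q$ is the geometric character lattice $X(T'_{\overline{k}_K})_\Q$ with its natural unramified action, which is trivial iff $T'$ is split. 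The paper's route has the advantage of not relying on the (informally stated) remark and of making the Frobenius eigenvalues on $(V_\ell A)^{I_K}$ explicit; yours is shorter once the remark is granted.
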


\begin{proof} Let us suppose that $\eta$ is unramified. Then $\rho_\ell|_{I_K}$ is unipotent, as it can be seen from \eqref{eq:pot_tor_rep_inertia}, so $A/K$ has semistable reduction by Thm.~\ref{thm:ab_var_reduction}.(2), and thus $\Ac_{k_K}^0$ must be a torus $T'$ (see Prop.~\ref{prop:geom_trichotomy}). After writing the matrices of the dual representation $\rho_\ell^*$ in the dual basis of $\mathbb{B}$, we see that \[(V_\ell A)^{I_K}=\Vect(v_1^*\otimes e_0^*,\ldots,v_g^*\otimes e_0^*),\] on which $\varphi_K$ acts as $q_K^{-1}\eta^*(\varphi_K)=\eta^*(1)(\varphi_K)$. On the other hand, we have isomorphisms of $\ell$-adic $\Ga_{k_K}$-representations  $(V_\ell A)^{I_K}\cong V_\ell \Ac_{k_K}$ (see \autocite[Lemma~2]{serre_tate}), $V_\ell\Ac_{k_K}\cong V_\ell T'$ (since $\Ac_{k_K}/\Ac^0_{k_K}$ is finite), and, formally, $V_\ell T'\cong \Hom(X(T')\otimes_{\Z}\Q_\ell,\Q_\ell)(1)$. Therefore, the $\Ga_{k_K}$-action on $X(T')$ is trivial if and only if $\eta^*$ is trivial. Since $\eta$ is $\Q$-linear, we have $\eta\simeq\eta^*$. Therefore, we conclude that $\eta$ is trivial if and only if $T'$ is split.

Suppose that $\eta$ is ramified. The image of $\eta|_{I_K}$ is finite, so $\eta|_{I_K}$ and, subsequently, $\rho_\ell|_{I_K}$ cannot be unipotent. Thm.~\ref{thm:ab_var_reduction}.(2) implies that $A/K$ does not have semistable reduction, so $\Ac_{k_K}^0$ is not a torus and we may conclude via Prop.~\ref{prop:geom_trichotomy}.
\end{proof}

\subsection{$F$-rationality again}\label{subs:pot_mult_F-rationality} Recalling the description of $\eta$ given by Remark~\ref{rem:p-adic_unif_funct}.(2) and using \eqref{eq:end_trichotomy} we see that $\eta$ is $F$-linear of dimension one. We may then regard $\eta$ as a homomorphism $\eta_F\colon W_K\to F^\times.$ We have a decomposition $F\otimes_{\Q}\C\cong \prod_{\iota\colon F\to\C}\C$, so 
\begin{equation} \eta\otimes_{\Q}\C \cong\prod_{\iota\colon F\to \C}\eta_\iota,\end{equation} with $\eta_\iota:=\eta_F\otimes_{F,\iota}\C$, where the structural morphism is given by $\iota$. Consequently, defining $\rho'_\iota:=\eta_\iota(-1)\otimes\spe(2)$, the isomorphism \eqref{eq:pot_mult_repr} gives \begin{equation}\label{eq:decomp_not_pot_good}\WD_i(\rho_\ell)\cong \prod_\iota \rho'_\iota.\end{equation}
We note that $\eta_\iota=\iota\circ\eta_F$, which implies that the $\eta_\iota$'s are $\Aut(\C)$-conjugate, and thus the $\rho'_\iota$'s are also $\Aut(\C)$-conjugate. As in Rem.~\ref{rem:root_n_indep_iota}, we may apply \autocite[Thm.~1]{rohrlich_galois_inv} to see that $w(\rho'_\iota,\psi_K)$ is independent of $\iota$.

\begin{thm}\label{thm:real_potMult_rootN_formulas} 
Let us fix some $\iota\colon F\hookrightarrow \C$. In the ongoing notation, 
\[w(\rho'_\iota,\psi_K)=\begin{cases}-1 & \text{if $A/K$ has split multiplicative reduction};\\ 
1 & \text{if $A/K$ has non-split multiplicative reduction};\\ 
(-1)^{\frac{q_K-1}{2}}&\parbox[t]{.62\textwidth}{if $A/K$ has additive potentially multiplicative \\ reduction and $p\neq2.$}\end{cases}\]
\end{thm}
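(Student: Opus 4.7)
The plan is to reduce to Proposition~\ref{prop:root_n_props} by unravelling the structure of $\rho'_\iota = \eta_\iota(-1)\otimes\spe(2)$. First, I would use Prop.~\ref{prop:root_n_props}.(1) to strip the integer Tate twist: since $\omega_K^{-1}$ is unramified, we get $w(\rho'_\iota,\psi_K)=w(\eta_\iota\otimes\spe(2),\psi_K)$. Next, I would establish that $\eta_\iota$ takes values in $\{\pm1\}$: because $\eta_F$ has finite image in $F^\times$ (see Rem.~\ref{rem:p-adic_unif_funct}.(2)), its image lies in $\mu(F)$, and since $F$ is totally real this forces $\mu(F)=\{\pm1\}$; applying $\iota$ gives $\eta_\iota(W_K)\subseteq\{\pm1\}$. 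In particular, $\eta_\iota$ is finite-image and self-dual, so Prop.~\ref{prop:root_n_props}.(3) applies and yields
\[w(\eta_\iota\otimes\spe(2),\psi_K)=\eta_\iota(\theta_K(-1))\cdot(-1)^{\<\eta_\iota\,|\,\1\>}.\]

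From here the three cases of Prop.~\ref{prop:pot_tor_char} reduce to evaluating these two factors. In case (a), $\eta_\iota=\1$, giving $1\cdot(-1)^1=-1$. In case (b), $\eta_\iota$ is unramified and nontrivial, so $\<\eta_\iota|\1\>=0$, and since $\theta_K(\O_K^\times)$ maps into the inertia part of $W_K^{\ab}$ while $-1\in\O_K^\times$, we get $\eta_\iota(\theta_K(-1))=1$, hence root number $1$. In case (c), $\eta_\iota$ is ramified quadratic, so again $\<\eta_\iota|\1\>=0$; the computation of $\eta_\iota(\theta_K(-1))$ is the only step requiring a small local argument.

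For that computation (which I expect to be the only genuinely delicate point, though it is still short), I would observe that, since $p\neq2$, the group $1+\m_K$ is pro-$p$, so any quadratic character of $\O_K^\times$ factors through $\O_K^\times/(1+\m_K)\cong k_K^\times$. The restriction $\eta_\iota\circ\theta_K|_{\O_K^\times}$ is nontrivial (as $\eta_\iota$ is ramified) and quadratic, hence must equal the unique quadratic character of $k_K^\times$, which sends the class of $-1$ to $(-1)^{(q_K-1)/2}$. Combined with the trivial pairing, this yields $w(\rho'_\iota,\psi_K)=(-1)^{(q_K-1)/2}$, completing the third case.

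The main technical ingredient is Prop.~\ref{prop:root_n_props}.(3), which cleanly handles self-dual finite-image twists of $\spe(2)$; without the totally-real hypothesis on $F$ (which forces quadratic, hence self-dual, $\eta_\iota$), this shortcut would not be available and one would have to use the general induced-representation epsilon formula as in the proof of Thm.~\ref{thm:non_ab_commutative_inertia}. The only step that feels case-specific is identifying the restriction of the ramified character to $\O_K^\times$ with the Legendre symbol; this is where the hypothesis $p\neq2$ is essential.
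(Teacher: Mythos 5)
Your proposal is correct and follows essentially the same route as the paper: reduce via Prop.~\ref{prop:root_n_props}.(1),(3) to evaluating $\eta_\iota(\theta_K(-1))\cdot(-1)^{\langle\eta_\iota|\1\rangle}$, using that $\eta_\iota$ is quadratic because $F$ is totally real, and then split into the three cases of Prop.~\ref{prop:pot_tor_char}. Your treatment of the ramified case (identifying $\eta_\iota\circ\theta_K|_{\O_K^\times}$ with the quadratic character of $k_K^\times$) is just a rephrasing of the paper's norm-group argument for the tamely ramified quadratic extension, so there is nothing substantive to add.
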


\begin{proof} We observe that, since $F$ is totally real, $\eta_F$ and $\eta_\iota$ are quadratic. Applying Prop.~\ref{prop:root_n_props}.(1),(3) gives \[w(\rho'_\iota,\psi_K)=\eta_\iota\left(\theta_K(-1)\right)\cdot (-1)^{\<\eta_\iota|\1\>}.\] Since $\eta_\iota$'s are $\Aut(\C)$-conjugate, we may replace $\eta$ with any $\eta_\iota$ in Prop.~\ref{prop:pot_tor_char}. 

Let us suppose that $\eta_\iota$ is unramified. Then $\eta_\iota(\theta_K(-1))=1$. Depending on whether $\<\eta_\iota|\1\>$ is 1 or 0, the reduction is split or non-split multiplicative, respectively, since $\<\eta_\iota|\1\>=1$ if and only if $\eta_\iota$ is trivial. 

It remains to treat the case when $\eta_\iota$ is ramified. In particular, $\eta_\iota$ is non-trivial, so $\<\eta_\iota|\1\>=0$. More precisely, $\eta_\iota$ is of exact order $2$, so it factors through a quotient $\Gal(L/K)$ of order $2$. If $p\neq 2$, then $L/K$ is totally tamely ramified. In that case, $\eta_\iota(\theta_K(-1))=1$ if and only if $-1$ is a norm for $L/K$. The latter is equivalent to $-1$ being a square in $k_K$, which happens exactly when $q_K\equiv 1\bmod 4$.\end{proof}

\begin{cor}\label{cor:real_potMult_rootN_formulas} 
For $A/K$ given at the begining of Section~\ref{sect:real_AV_pot_mult} and for $p\neq 2$, 
\[w(A/K)=\begin{cases}(-1)^g & \text{if the reduction (over $K$) is split multiplicative};\\ 
1 & \text{if the reduction is non-split multiplicative};\\ (-1)^{g\frac{q_K-1}{2}}&\text{if the reduction is additive}.\end{cases}\]
\end{cor}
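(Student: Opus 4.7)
The plan is to deduce the corollary directly from Theorem~\ref{thm:real_potMult_rootN_formulas} by exploiting the decomposition \eqref{eq:decomp_not_pot_good} of the complex Weil--Deligne representation $\WD_i(\rho_\ell)$ across the embeddings $\iota\colon F\hookrightarrow\C$. First I would recall that since $[F:\Q]=g$, the product $\prod_\iota \rho'_\iota$ has exactly $g$ factors, and that $w(\rho'_\iota,\psi_K)$ is independent of the choice of $\iota$ (as noted in \ref{subs:pot_mult_F-rationality} via \autocite[Thm.~1]{rohrlich_galois_inv}). By definition of the root number of $A/K$ and multiplicativity of root numbers in short exact sequences of WD-representations, this yields
\[w(A/K)=w(\WD_i(\rho_\ell),\psi_K)=\prod_\iota w(\rho'_\iota,\psi_K)=w(\rho'_\iota,\psi_K)^g\]
for any fixed $\iota$.

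Next I would observe that the three cases of the corollary correspond bijectively to the three cases of Proposition~\ref{prop:pot_tor_char} (which is possible because $B=0$ by Prop.~\ref{prop:geom_trichotomy}), and that the trichotomy on $\Ac_{k_K}^0$ translates identically to a trichotomy on $\eta_\iota$ (split/non-split/ramified) because $\eta_\iota=\iota\circ\eta_F$ has the same kernel and image type as $\eta$. Plugging the values from Theorem~\ref{thm:real_potMult_rootN_formulas} into the $g$-th power formula above gives the three cases: $(-1)^g$ for split multiplicative, $1^g=1$ for non-split multiplicative, and $\bigl((-1)^{(q_K-1)/2}\bigr)^g=(-1)^{g(q_K-1)/2}$ for additive (potentially multiplicative) reduction, which is exactly the claim.

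There is no real obstacle here: all the substantive work (the geometric trichotomy, $F$-rationality of $\eta$, the $\Aut(\C)$-conjugacy of the $\rho'_\iota$, and the per-factor root number computation) has already been carried out in Proposition~\ref{prop:geom_trichotomy}, \ref{subs:pot_mult_F-rationality}, and Theorem~\ref{thm:real_potMult_rootN_formulas}. The only point that requires a line of justification is the equality $w(A/K)=w(\rho'_\iota,\psi_K)^g$, which follows immediately from \eqref{eq:decomp_not_pot_good} together with the $\iota$-independence of the individual root numbers; once this is stated, the corollary reduces to a one-line case analysis.
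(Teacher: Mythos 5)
Your proposal is correct and follows essentially the same route as the paper, which likewise deduces the corollary from the decomposition \eqref{eq:decomp_not_pot_good}, the multiplicativity of root numbers, the $\iota$-independence of $w(\rho'_\iota,\psi_K)$, and Theorem~\ref{thm:real_potMult_rootN_formulas}. The identity $w(A/K)=w(\rho'_\iota,\psi_K)^g$ and the matching of the three reduction types via Proposition~\ref{prop:pot_tor_char} are exactly the points the paper relies on.
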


\begin{proof}
The formulas follow from \eqref{eq:decomp_not_pot_good}, multiplicativity of root numbers, and Thm~\ref{thm:real_potMult_rootN_formulas}.
\end{proof}

\subsection*{Acknowledgements} The present paper is based on the work I have done during my doctoral studies at the University of Strasbourg. I wholeheartedly thank my thesis advisors Adriano Marmora and Rutger Noot for proposing the research topic, for numerous hours of helpful discussions, as well as for proof-reading the draft of the manuscript. I thank Kęstutis Česnavičius and Takeshi Saito for pointing out some errors in the previous version. I also thank the anonymous referee for the comments and suggestions. 

\printbibliography

\end{document}